
\documentclass[a4paper,fleqn]{cas-sc}



\numberwithin{equation}{section}

\usepackage{amsthm}




\newtheorem{definition}{Definition}[section]
\newtheorem{remark}{Remark}[section]
\newtheorem{theorem}{Theorem}[section]
\newtheorem{lemma}{Lemma}[section]
\newtheorem{corollary}{Corollary}[section]
\newtheorem{assumption}{Assumption}[section]
\newtheorem{proposition}{Proposition}[section]
\newtheorem{example}{Example}[section]

\usepackage[authoryear,longnamesfirst]{natbib}
\usepackage[nameinlink, noabbrev, capitalise]{cleveref}
\usepackage[group-digits=false]{siunitx}
\usepackage{booktabs}
\usepackage{enumitem}
\usepackage{tcolorbox}

\usepackage{subcaption}
\captionsetup{skip=4pt}
\captionsetup[subfigure]{justification=centering}

   


\def\tsc#1{\csdef{#1}{\textsc{\lowercase{#1}}\xspace}}
\tsc{WGM}
\tsc{QE}
\newcommand{\N}{\mathbb N}
\newcommand{\R}{\mathbb R}

\newcommand{\cC}{\mathcal C}
\newcommand{\cF}{\mathcal F}
\newcommand{\cH}{\mathcal H}

\newcommand{\bE}{\mathbb E}

\newcommand{\eps}{\varepsilon}
\newcommand{\cN}{\mathcal N}
\newcommand{\cS}{\mathcal S}

\newcommand{\nN}{\cN\cN}
\newcommand{\sNN}{\cS\cN\cN}
\newcommand{\Lip}{\text{Lip}}
\newcommand{\E}{\mathbb E}

\DeclareMathOperator*{\argmin}{arg\,min}
\DeclareMathOperator{\relu}{ReLU}

\newtcolorbox{boxedremark}[1][]{
  colback=gray!5!white,
  colframe=blue!40!black,
  fonttitle=\bfseries,
  title=Remark,
  #1
}



\begin{document}
\let\WriteBookmarks\relax
\def\floatpagepagefraction{1}
\def\textpagefraction{.001}

\shorttitle{Approximation bounds for norm constrained deep neural networks}    

\shortauthors{F.P. Maiale, A. Trofimova, A. De Marinis}  

\title [mode = title]{Approximation bounds for norm constrained deep neural networks}  



%

\author[1]{Francesco Paolo Maiale}[orcid=0000-0002-3389-6937]
\ead{francescopaolo.maiale@gssi.it}
\author[1]{Anastasiia Trofimova}[orcid=0000-0001-9119-133X]
\ead{anastasiia.trofimova@gssi.it}
\author[2]{Arturo De~Marinis}[orcid=0009-0004-4250-1054]
\ead{A.DeMarinis@tudelft.nl}

\cormark[1]
\cortext[1]{Corresponding author}




\affiliation[1]{organization={Gran Sasso Science Institute, Division of Mathematics},
            addressline={Viale Francesco Crispi 7}, 
            city={L'Aquila},
            postcode={67100}, 
            country={Italy}}
\affiliation[2]{organization={Delft University of Technology, Faculty of Aerospace Engineering},
            addressline={Kluyverweg 1}, 
            city={Delft},
            postcode={2629HS}, 
            country={Netherlands}}










\begin{abstract}
This paper studies the approximation capacity of neural networks with an arbitrary activation function and with norm constraint on the weights. Upper and lower bounds on the approximation error of these networks are computed for smooth function classes. The upper bound is proven by first approximating high-degree monomials and then generalizing it to functions via a partition of unity and Taylor expansion. The lower bound is derived through the Rademacher complexity of neural networks. A probabilistic version of the upper bound is also provided by considering neural networks with randomly sampled weights and biases. Finally, it is shown that the assumption on the regularity of the activation function can be significantly weakened without worsening the approximation error, and the approximation upper bound is validated with numerical experiments.
\end{abstract}



\begin{keywords}
Approximation theory \sep Deep learning \sep Neural networks \sep \vspace{2.5mm} \textit{MSC}: 41A25, 41A29, 68T07
\end{keywords}

\maketitle

\section{Introduction}

Neural networks have emerged as powerful tools for approximating complex functions across various scientific and engineering disciplines. They are parametric maps obtained by composing affine and nonlinear functions. We define a neural network $\varphi: \R^d \rightarrow \R^n$ with $L\in\N$ layers as follows:
\begin{equation}\label{eq:nn1}
    \phi(x) = \ell_L \circ \sigma_{L-1} \circ \ell_{L-1} \circ \dots \circ \sigma_0 \circ \ell_0(x), \qquad x\in\R^d,
\end{equation}
where $\sigma_0, \ldots, \sigma_{L-1}:\mathbb{R}\to\mathbb{R}$ are nonlinear and non-polynomial scalar activation functions applied entrywise, while $\ell_0,\dots,\ell_L$, called layers, are affine functions defined as
\[
    \ell_i(u) = A_i u + b_i, \qquad u\in\R^{m_i},\ A_i\in\R^{n_i\times m_i},\ b_i\in\R^{n_i},\quad  i=0,\dots,L,
\]
with $m_0=d$, $m_{i+1}=n_i$, $i=0,\dots,L-1$, and $n_L=n$. The matrices $A_i$ and vectors $b_i$, $i=0,\dots,L$, are called \emph{weights} and \emph{biases} respectively, and each component of $\sigma_i\circ\ell_i(u)$, $i=0,\dots,L-1$, and $\ell_L(u)$ is called a \emph{neuron} or \emph{unit}. We also refer to the composition $\sigma_i\circ\ell_i(u)$ as a layer. We denote by $W := \max\{n_0,\dots,n_L\}$ the \emph{width} of the neural network and we call $L$ its \emph{depth}. We distinguish two cases:
\begin{itemize}
    \item if $L\le1$, we call $\varphi$ a shallow neural network,
    \item if $L>1$, we call $\varphi$ a deep neural network.
\end{itemize}
Additionally, we note that neurons of the same layer are also allowed to have different activation functions. This is needed in the proof of some results throughout the paper. 

The following definition clarifies what we mean by \emph{approximation}.

\begin{definition}
    Let $d,n\in\N$ and $\|\cdot\|$ be a norm on $\R^n$. A set of functions $\cH$ mapping from $\R^d$ to $\R^n$ is a \emph{universal approximator} for the space $\cC(\R^d,\R^n)$ of continuous functions with respect to the topology of compact convergence if, for any $f\in \cC(\R^d,\R^n)$, any compact set $K\subset\R^d$, and any $\eps>0$, there exists a function $\varphi_\eps \in \cH$ such that
    \begin{equation}\label{eq:app_err}
        \|f-\varphi_\eps\|_{\infty, K} := \max_{x\in K} \|f(x)-\varphi_\eps(x)\| \le \eps.
    \end{equation}
\end{definition}

The approximation capabilities of neural networks have been extensively studied over the past several decades and continue to attract significant research interest. This section provides a brief review of the relevant literature.

\cite{cybenko1989approximation} and~\cite{hornik1989multilayer} showed that shallow neural networks with arbitrary width and sigmoidal activation functions -- namely, increasing functions $\sigma:\R\to\R$ with $\lim_{x\to-\infty}\sigma(x)=0$ and $\lim_{x\to\infty}\sigma(x)=1$ -- are universal approximators for continuous functions.~\cite{hornik1991approximation} revealed that this property arises not from the specific choice of activation function but from the compositional structure of the network itself.~\cite{leshno1993multilayer} and~\cite{pinkus1999approximation} showed that shallow neural networks with arbitrary width are universal approximators if and only if their activation function is not a polynomial.

The recent breakthrough of deep learning has stimulated much research on the approximation ability of neural networks with arbitrary depth.~\cite{gripenberg2003approximation} and~\cite{yarotsky2017error,lu2017expressive, hanin2017approximating} studied the universal approximation property of deep neural networks with arbitrary depth and bounded width using ReLU activation functions.~\cite{kidger2020universal} extended those results to arbitrarily deep neural networks with bounded width and arbitrary activation functions.

\cite{maiorov1999lower} studied networks of bounded depth and width. They showed that there exists an analytic sigmoidal activation function such that neural networks with two hidden layers and bounded width are universal approximators.~\cite{guliyev2018approximation} designed a family of two hidden layer neural networks with fixed width and a specific sigmoidal activation function that is a universal approximator of continuous functions.~\cite{guliyev2018on} also constructed shallow neural networks with bounded width that are universal approximators for univariate functions. However, this construction does not apply to multivariate functions.~\cite{shen2022optimal} computed precise quantitative information on the depth and width required to approximate a target function by deep and wide ReLU neural networks.

The results in most of the mentioned papers are only existence results, that is, given $f\in \cC(\R^d,\R^n)$, $K\subset\R^d$ compact, and $\eps>0$, there exists a neural network $\varphi_\eps$ such that~\eqref{eq:app_err} holds; however, they do not provide any information
\begin{itemize}
    \item on how to choose the number of layers of $\varphi_\eps$ and the dimension of the weights,
    \item on how to compute exactly the weights of $\varphi_\eps$,
    \item on the width or depth required for $\varphi_\eps$ to achieve the target error tolerance $\eps$.
\end{itemize}
This gap has motivated recent work on \emph{approximation rates}, which quantify how the approximation error decays as a function of the network's depth or width.

Approximation rates of shallow neural networks have been computed in~\cite{barron1993universal}. Recent works have also derived approximation rates of shallow neural networks with particular activation functions, for example the ReLU and cosine activation functions in~\cite{siegel2022high}, the ReLU activation function in~\cite{mao2023rates}, and the ReLU$^k$ activation function in~\cite{yang2024optimal}.

Approximation rates of deep neural networks have been derived for many function classes, such as continuous functions in~\cite{yarotsky2017error,yarotsky2018optimal,shen2019deep}, smooth functions in~\cite{yarotsky2020phase,lu2021deep}, piecewise‑smooth functions in~\cite{petersen2018optimal}, shift‑invariant spaces in~\cite{yang2022approximation}, and band‑limited functions in~\cite{montanelli2021deep}. Such approximation rates are usually characterized by the number of weights, as in~\cite{yarotsky2017error,yarotsky2018optimal,yarotsky2020phase}, or the number of neurons, as in~\cite{shen2019deep,lu2021deep}, rather than the size of the weights in some norms. Nevertheless, the size of the weights has an impact on the approximation capacity of neural networks. For example, regularization methods have been proposed to control the Lipschitz constant of neural networks to improve their robustness to adversarial attacks, see~\cite{miyato2018spectral,brock2019large,de2025thesis,guglielmi2025contractivity,de2025improving}. Such regularization methods often restrict certain norms of the weights, thereby substantially reducing the approximation capabilities of neural networks, see~\cite{de2025thesis,de2025approximation}. For instance,~\cite{huster2018limitations} showed that ReLU neural networks with certain constraints on the weights cannot approximate some simple functions, such as the absolute value function.

It is therefore natural to ask how norm constraints on the weights affect approximation capacity, and to characterize approximation rates in terms of the size of the weights. To the best of our knowledge, the first paper addressing this challenge is~\cite{JIAO2023249}. The authors study the approximation capacity of deep ReLU neural networks with norm constraints on the weights; in particular, they prove upper and lower bounds on the approximation error~\eqref{eq:app_err} of these networks for smooth function classes. Such upper and lower bounds are called \emph{approximation bounds}.

In this paper, we generalize the results of~\cite{JIAO2023249} to deep neural networks with arbitrary nonlinear, non-polynomial activation functions that are differentiable almost everywhere with derivatives bounded in $[0,1]$. In particular, we extend the derivation of the approximation upper and lower bounds in~\cite{JIAO2023249} to deep neural networks with arbitrary activation functions, and we provide a probabilistic version of the upper bound.

\subsection{Outline of the paper}

This paper is organized as follows. In~\cref{sec:preliminaries}, we introduce preliminary results on neural networks and define the Rademacher complexity. In~\cref{sec:upper_bound}, we establish the upper bound through a constructive approach, specifically, by first approximating high-degree monomials and then generalizing it to functions via a partition of unity and Taylor expansion. In~\cref{sec:lower_bound}, we establish the lower bound following a well-known strategy from \cite{JIAO2023249} that easily generalizes to our case thanks to the results obtained in the preliminaries regarding Rademacher complexity. In~\cref{sec:upper_bound_nd}, we construct neural networks with randomly sampled weights and biases to approximate functions, giving a probabilistic point of view of the results obtained in~\cref{sec:upper_bound}. Finally, in~\cref{subsec:lower_regularity,sec:numerical_weaker}, we establish that the assumption used throughout the paper regarding the regularity of the activation function $\sigma$ can be significantly weakened without worsening the approximation error.

\section{Preliminaries} \label{sec:preliminaries}

\subsection{Neural networks}

This section introduces the notation and preliminary results on neural networks used throughout the paper. We denote by $\nN_{d,n}(W,L)$ the set of neural networks with width $W$, depth $L$, and input and output dimensions $d$ and $n$ respectively. When the input and output dimensions are clear from context, we write simply $\nN(W,L)$. 

Sometimes, we will use the notation $\phi_\theta\in\nN(W,L)$ to emphasize that a neural network $\phi_\theta$ is parameterized by the weights and biases
\begin{equation}\label{eq:param}
    \theta := \bigl((A_0,b_0),(A_1,b_1),\ldots,(A_L,b_L)\bigr).
\end{equation}
If $\sigma_\ell(1)\ne 0$, $\ell=0,\dots,L-1$, the neural network described in \eqref{eq:nn1} can also be expressed compactly as
\begin{equation}\label{eq:nn2}
    \phi(x) = \Tilde{A}_L\sigma_{L-1} \left(\Tilde{A}_{L-1}\sigma_{L-2} \left(\cdots\Tilde{A}_1\sigma_0(\Tilde{A}_0\Tilde{x} ) \cdots \right) \right),
\end{equation}
where the augmented matrices $\Tilde A_0, \Tilde A_1,\ldots,\Tilde A_L$ are defined as
\[
    \Tilde{A}_0 = 
    \begin{pmatrix}
    A_0 & b_0 \\
    0 & 1
    \end{pmatrix},
    \quad
    \Tilde{A}_\ell = 
    \begin{pmatrix}
    A_\ell & (1/\sigma_{\ell-1}(1))b_\ell \\
    0 & 1/\sigma_{\ell-1}(1)
    \end{pmatrix},
    \quad
    \Tilde{A}_L = 
    \begin{pmatrix}
    A_L & (1/\sigma_{L-1}(1))b_L
    \end{pmatrix},
\]
while the augmented input vector is $\Tilde x = (x,1)^\top$. For simplicity, we assume throughout the paper $\sigma_\ell(1)=1$ for $\ell=0,\dots,L-1$, but our results remain valid for any value of $\sigma_\ell(1)$ different from $0$. 

A natural way to impose a norm constraint on the weights of a neural network is as follows: given $K > 0$ and matrix norm $\|\cdot\|$, we require the weights of $\phi_{\theta}$ to satisfy
\[
    \prod_{\ell=0}^L \|\Tilde{A}_\ell\| \le K.
\]
In this paper, we consider the uniform operator norm for a matrix $A=(a_{ij}) \in \R^{m \times n}$ defined as
\[
    \|A\| := \|A\|_\infty = \max_{\|x\|_\infty \le 1} \|Ax\|_\infty.
\]
We recall that $\|A\|$ corresponds to the maximum $1$-norm of the rows of $A$
\[
    \|A\|=\max_{1 \le i \le m} \sum_{j=1}^n |a_{ij}|.
\]
Thus, the product of the weight norms can be rewritten as follows
\[
    \prod_{\ell=0}^L \left\|\Tilde{A}_\ell \right\| = \left\|(A_L,b_L) \right\| \prod_{\ell=0}^{L-1} \max\left\{ \left\|(A_\ell,b_\ell) \right\|, 1 \right\}.
\]
We are now ready to define the space of norm constrained neural networks.

\begin{definition}
    We define the \emph{space of norm constrained neural networks with width $W$ and depth $L$}, denoted by $\nN(W,L,K)$, as the set of functions $\phi_\theta\in\nN(W,L)$, parameterized by weights and biases $\theta$ as in~\eqref{eq:param}, that satisfy the norm constraint
    \begin{align*}
        &\left\|(A_\ell,b_\ell) \right\| \le 1, \quad \text{for} \quad \ell\in\{0,\dots,L\}\setminus I,\\
        &\left\|(A_\ell,b_\ell) \right\| \ge 1, \quad \text{for} \quad \ell\in I, \quad \text{and} \quad \prod_{\ell\in I} \left\|(A_\ell,b_\ell) \right\| \le K,
    \end{align*}
    where $I\subset\{0,\dots,L\}$.

    In the specific case where $b_\ell = 0$, $\ell=0, \dots, L$, we define the space of bias-free norm constrained neural networks, and we denote it by $\sNN(W,L,K)$.
\end{definition}

\begin{remark}
    We notice that $\prod_{\ell=0}^L \left\|\Tilde{A}_\ell \right\|$ is the Lipschitz constant of the neural network $\phi_\theta\in\nN(W,L,K)$, so constraining the norm of the weights of a neural network means constraining its Lipschitz constant. In practice, we may therefore estimate $K$ by computing the Lipschitz constant of $\phi_\theta$, treating $K$ as an upper bound on this quantity.
\end{remark}

The next proposition generalizes~\cite[Proposition 2.1]{JIAO2023249} to any activation function.

\begin{proposition} \label{proposition:inclusion}
The following inclusions hold:
\[
    \sNN(W,L,K) \subset \nN(W,L,K) \subset \sNN(W+1,L,K).
\]
\end{proposition}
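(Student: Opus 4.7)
The plan is to handle the two inclusions separately. The first is essentially by definition, while the second uses the augmentation trick already set up in \eqref{eq:nn2}.

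For $\sNN(W,L,K)\subset\nN(W,L,K)$, I would simply note that a bias-free network is literally a network in $\nN(W,L)$ with every $b_\ell$ equal to zero. Since $\|(A_\ell,0)\|_\infty=\|A_\ell\|_\infty$, the index set $I$ that certifies membership in $\sNN(W,L,K)$ also certifies membership in $\nN(W,L,K)$, and nothing further needs to be checked.

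For $\nN(W,L,K)\subset\sNN(W+1,L,K)$, I would rewrite any $\phi_\theta\in\nN(W,L,K)$ in the compact form \eqref{eq:nn2}: the augmented matrices $\tilde A_0,\ldots,\tilde A_L$ applied to the augmented input $\tilde x=(x,1)^\top$ produce the same output as $\phi_\theta(x)$. The hypothesis $\sigma_\ell(1)=1$ is what makes this work: a one-line induction on $\ell$ shows that the extra coordinate equals $1$ after every activation, so the network is genuinely bias-free when viewed in the augmented form. The extra coordinate increases each hidden width by one, but $\tilde A_L$ still has $n$ rows, so the overall width is at most $W+1$.

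The remaining step is to check the norm constraint, which reduces to a row-sum computation. Since $\|\cdot\|_\infty$ equals the maximum $1$-norm of the rows,
\[
\|\tilde A_\ell\|_\infty=\max\bigl\{\|(A_\ell,b_\ell)\|_\infty,\,1\bigr\}\text{ for }\ell=0,\ldots,L-1,\qquad \|\tilde A_L\|_\infty=\|(A_L,b_L)\|_\infty,
\]
so the product $\prod_{\ell=0}^L\|\tilde A_\ell\|_\infty$ reproduces exactly the quantity $\|(A_L,b_L)\|\prod_{\ell=0}^{L-1}\max\{\|(A_\ell,b_\ell)\|,1\}$ that already appears in the preceding paragraph and is bounded by $K$. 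Keeping the same index set $I$ as in the original definition then places the augmented, bias-free network inside $\sNN(W+1,L,K)$. I do not anticipate any real obstacle here; the only bookkeeping point worth mentioning is that the width grows by exactly one rather than two, which hinges on the fact that $\tilde A_L$ has $n$ rows and not $n+1$.
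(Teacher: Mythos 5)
Your proposal is correct and follows essentially the same route as the paper's (deliberately brief) proof: the first inclusion is immediate from the definition, and the second uses the augmented-matrix representation \eqref{eq:nn2} with the assumption $\sigma_\ell(1)=1$. You simply spell out the bookkeeping the paper leaves implicit — the identity $\prod_{\ell=0}^L\|\tilde A_\ell\|=\|(A_L,b_L)\|\prod_{\ell=0}^{L-1}\max\{\|(A_\ell,b_\ell)\|,1\}\le K$ and the width growing by exactly one — which is consistent with the paper's argument.
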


\begin{proof}
The first inclusion is straightforward, as the elements in $\sNN(W, L, K)$ represent neural networks that have zero biases.  The second inclusion can be established by using the representation of a neural network with non-zero biases in terms of the augmented matrices, as shown in~\eqref{eq:nn2}.
\end{proof}

The next proposition generalizes~\cite[Proposition 2.5]{JIAO2023249}, and provides an overview of fundamental operations that can be performed on neural networks. These operations will be particularly useful in constructing approximations of regular functions using neural networks.

\begin{proposition} \label{proposition:properties_neural_netowrk}
    Let $\phi_1\in\nN_{d_1,k_1}(W_1,L_1,K_1)$ and $\phi_2\in\nN_{d_2,k_2}(W_2,L_2,K_2)$. The following properties hold.
    
    \begin{enumerate}[label=(\roman*)]
         \item \textbf{Inclusion.} If $d_1 = d_2$, $k_1 = k_2$, $W_1 \leq W_2$, $L_1\leq L_2$, and $K_1 \le K_2$, then
         \[
            \nN_{d_1, k_1}(W_1, L_1, K_1) \subseteq \nN_{d_2, k_2}(W_2, L_2, K_2).
         \]
    
        \item \textbf{Composition.} If $k_1=d_2$, then the composition of $\phi_1$ and $\phi_2$ satisfies
        \[
        \phi_2\circ\phi_1\in\nN_{d_1,k_2} \left(\max\{W_1,W_2\},L_1+L_2,K_1 K_2 \right).
        \]
        Additionally, if $\phi_1(x)=Ax+b$, with $A \in \R^{d_2 \times d_1}$ and $b \in \R^{d_2}$, then the composition $\phi_2\circ\phi_1$ is a neural network that belongs to $\nN_{d_1,k_2} \left(W_2,L_2,\|A\|K_2 \right)$.
                
        \item \textbf{Concatenation.} If $d_1=d_2$, then
        \[
        \phi(x):=\bigl(\phi_1(x),\phi_2(x)\bigr) \in\nN_{d_1,k_1+k_2} \left(W_1+W_2,\max\{L_1,L_2\},\max\{K_1,K_2\} \right).
        \]
        
        \item \textbf{Linear Combination.} If $d_1=d_2$ and $k_1=k_2$, then, for any $c_1,c_2\in\R$, 
        \[
            \phi:=c_1\phi_1+c_2\phi_2 \in \nN_{d_1,k_1}\left(W_1+W_2,\max\{L_1,L_2\},|c_1|K_1+|c_2|K_2\right).
        \]
    \end{enumerate}
\end{proposition}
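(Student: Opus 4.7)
Each of the four properties will be established by writing out the weights of the combined network explicitly and then bounding the product of their augmented-matrix norms. For part (i), the parameters $W$, $L$, $K$ all enter the definition monotonically: larger width is obtained by zero-padding the weight matrices, a larger norm bound is looser, and larger depth is obtained by inserting ``inert'' layers that preserve the computed function. The last step is the only delicate one, because the identity is not a permitted activation; I would handle it by exploiting the fact that different neurons in the same layer may use different activations together with the normalization $\sigma_\ell(1)=1$: route the signal unchanged through an extra layer by passing a constant $1$ alongside it, use the activation acting at the point $1$ as the identity, and reconstruct the original vector via the outgoing linear map. For part (ii), I would concatenate the two layer sequences of $\phi_2\circ\phi_1$: the final affine map of $\phi_1$ and the first linear part of $\phi_2$ merge into a single affine map, giving total depth $L_1+L_2$, width $\max\{W_1,W_2\}$, and a product of augmented-matrix norms bounded by $K_1K_2$ via submultiplicativity $\|A_0^{(2)}A_{L_1}^{(1)}\|\le\|A_0^{(2)}\|\,\|A_{L_1}^{(1)}\|$. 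When $\phi_1(x)=Ax+b$ is purely affine, we absorb it into the first layer of $\phi_2$ with no extra depth, so only the factor $\|A\|$ multiplies the norm budget.

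For part (iii), I would place $\phi_1$ and $\phi_2$ in parallel, so that each combined layer is block-diagonal with the two blocks coming from the respective networks; this gives width $W_1+W_2$, and any depth mismatch is patched by the inert-layer trick from (i) to reach $\max\{L_1,L_2\}$. Because each row of a block-diagonal matrix is supported on exactly one block, $\|\cdot\|_\infty$ of the combined matrix equals the maximum of the block norms, and the overall product bound is $\max\{K_1,K_2\}$. Part (iv) builds on (iii) and uses the linearity of the final layer: I concatenate $\phi_1$ and $\phi_2$ up to the last affine map and then replace the two final matrices $A_{L_1}^{(1)},A_{L_2}^{(2)}$ by the single horizontally-stacked block $[\,c_1A_{L_1}^{(1)}\,,\,c_2A_{L_2}^{(2)}\,]$, summing the output biases. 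Since the $\ell_1$-norm of a row of such a stacked matrix equals the sum of the row $\ell_1$-norms of its two blocks, this final layer has $\|\cdot\|_\infty$ bounded by $|c_1|\,\|A_{L_1}^{(1)}\|_\infty+|c_2|\,\|A_{L_2}^{(2)}\|_\infty$; combining this with the interior $\max$-bounds from (iii) and assigning the ``big'' layer in the definition of $\nN(W,L,K)$ to this final combined layer yields the additive norm bound $|c_1|K_1+|c_2|K_2$.

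The main obstacle I expect is the depth-extension step underlying (i) and (iii): for a generic non-polynomial activation the identity cannot be realized exactly with a single unit, unlike the ReLU case of \cite{JIAO2023249} where $x=\relu(x)-\relu(-x)$. The fix sketched above, combining mixed activations per neuron with the normalization $\sigma_\ell(1)=1$, is precisely what the excerpt alludes to when it notes that different activation functions may appear within the same layer. A secondary delicacy is the norm accounting in (iv), which requires careful tracking of which branch owns the ``big'' factor in the norm constraint so that the row-sum characterization of $\|\cdot\|_\infty$ converts a nominally multiplicative bound into the additive form $|c_1|K_1+|c_2|K_2$ without introducing cross-terms.
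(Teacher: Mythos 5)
Your constructions for (ii)--(iv) are essentially the ones in the paper: for composition you merge the last affine map of $\phi_1$ with the first layer of $\phi_2$ (and absorb a purely affine $\phi_1$ into the first layer of $\phi_2$, picking up the factor $\|A\|$); for concatenation you stack the two networks block-diagonally; for linear combinations you replace the two output layers by the horizontally stacked block $\bigl(c_1A_{L}^{(1)},\,c_2A_{L}^{(2)}\bigr)$ with summed biases. The only presentational difference there is that the paper verifies the norm constraint by bounding the Lipschitz constant of the combined network, while you track the augmented-matrix norms layer by layer; the underlying parameterizations are identical.

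The genuine gap is your depth-padding mechanism for (i) (also invoked to equalize depths in (iii)). As described --- ``pass a constant $1$ alongside, use the activation acting at the point $1$ as the identity, and reconstruct the original vector via the outgoing linear map'' --- the trick cannot reproduce the identity on the data channels: a neuron whose pre-activation is the constant $1$ outputs the constant $\sigma(1)=1$, and no outgoing affine map can recover a data-dependent vector $z$ from constants; a neuron whose pre-activation is a component of $z$ outputs $\sigma(z_i)$, and since $\sigma$ is nonlinear and no invertibility or homogeneity is assumed, a subsequent affine map cannot undo it exactly. So your route to the $L_2-L_1$ extra layers is not valid as stated. The paper instead simply appends identity layers, relying on its stated convention that neurons within the same layer may carry different activation functions (so such a layer can be realized, e.g., by ReLU pairs via $z=\relu(z)-\relu(-z)$ or by identity units), each padding layer having augmented norm $1$ and hence leaving the constraint untouched. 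A secondary point: in (iv) your final step ``assign the big layer to the final combined layer'' tacitly assumes that all interior layers of both networks have norm at most one; for a general, non-homogeneous $\sigma$ one cannot rescale layers to push large factors to the output, so as written your additive bound $|c_1|K_1+|c_2|K_2$ is only justified under that normalization, whereas the paper sidesteps the issue by arguing through the Lipschitz constant.
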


\begin{proof}
    We assume that the neural networks $\phi_1$ and $\phi_2$ are parameterized by weights and biases
    \[
        \theta^{(i)} = 
        \left(
        \begin{pmatrix}
            A_0^{(i)} & b_0^{(i)}
        \end{pmatrix},
        \begin{pmatrix}
            A_1^{(i)} & b_1^{(i)}
        \end{pmatrix},
        \ldots,
        \begin{pmatrix}
            A_{L_i}^{(i)} & b_{L_i}^{(i)}
        \end{pmatrix}
        \right), \qquad i = 1,2.
    \]

    \begin{enumerate}[label=(\roman*)]
        \item We consider $A_\ell^{(1)}\in\R^{W_2\times W_2}$ and $b_\ell^{(1)}\in\R^{W_2}$, with $0\le\ell\le L$, by adding suitable zero rows and columns to $A_\ell^{(1)}$ and $b_\ell^{(1)}$ if necessary. We notice that this operation does not change the norm. Then, by adding $L_2-L_1$ identity layers, we conclude that $\phi_1\in\nN_{d_2,k_2}(W_2,L_2,K_2)$.
                
        \item By ({\romannumeral 1}), without loss of generality, we assume $W_1=W_2$. Then $\phi_2\circ\phi_1$ can be parameterized by
        \[
            \theta =
            \left(
                \begin{pmatrix}
                    A_0^{(1)} & b_0^{(1)}
                \end{pmatrix},
                \ldots,
                \begin{pmatrix}
                    A_0^{(2)}A_{L_1}^{(1)} & A_0^{(2)}b_{L_1}^{(1)}+b_0^{(2)}
                \end{pmatrix},
                \ldots,
                \begin{pmatrix}
                    A_{L_2}^{(2)} & b_{L_2}^{(2)}
                \end{pmatrix}
            \right).
        \]
        Then, for any $x,y\in\R^{d_1}$, it holds that
        \[
            \|\phi_2\circ\phi_1(x)-\phi_2\circ\phi_1(y)\| \le K_1K_2 \,\|x-y\|.
        \]
        Hence $\phi_2\circ\phi_1\in\nN_{d_1,k_2}\left(\max\{W_1,W_2\},L_1+L_2, K_1K_2\right)$.

        In particular, if $\phi_1(x)=Ax+b$, we parameterize $\phi_2\circ\phi_1$ by
        \[
            \theta =
            \left(
                \begin{pmatrix}
                    A_0^{(2)}A & A_0^{(2)}b+b_0^{(2)}
                \end{pmatrix},
                \begin{pmatrix}
                    A_1^{(2)} & b_1^{(2)}
                \end{pmatrix},
                \ldots,
                \begin{pmatrix}
                    A_{L_2}^{(2)} & b_{L_2}^{(2)}
                \end{pmatrix}
            \right).
        \]
        Then, for any $x,y\in\R^{d_1}$, we have that
        \[
            \|\phi_2\circ\phi_1(x)-\phi_2\circ\phi_1(y)\| \le \|A\|K_2 \, \|x-y\|,
        \]
        which means that $\phi_2\circ \phi_1\in\nN_{d_1,k_2} \left(W_2,L_2,\|A\|K_2 \right)$.
                
        \item By ({\romannumeral 1}), without loss of generality, we assume $L_1=L_2=L$. We notice that $\phi$ can be parameterized by
        \[
            \theta = 
            \left(
            \begin{pmatrix}
                A_0 & b_0
            \end{pmatrix},
            \begin{pmatrix}
                A_1 & b_1
            \end{pmatrix},
            \ldots,
            \begin{pmatrix}
                A_L & b_L
            \end{pmatrix}
            \right),
        \]
        where we set
        \[  A_0 :=
            \begin{pmatrix}
                A_0^{(1)} \\
                A_0^{(2)}
            \end{pmatrix}, \quad
            b_0 :=
            \begin{pmatrix}
                b_0^{(1)} \\
                b_0^{(2)}
            \end{pmatrix}, \quad
            A_\ell :=
            \begin{pmatrix}
                A_\ell^{(1)} & 0 \\
                0 & A_\ell^{(2)}
            \end{pmatrix}, \quad
            b_\ell :=
            \begin{pmatrix}
                b_\ell^{(1)} \\[.3em]
                b_\ell^{(2)}
            \end{pmatrix},
            \]
        for $\ell = 1,\ldots,L$. Then, for any $x,y\in\R^{d_1}$, it holds that
        \begin{align*}
            \|\phi(x)-\phi(y)\| \le
            \left\|
                \begin{pmatrix}
                    \phi_1(x) \\
                    \phi_2(x)
                \end{pmatrix}
                -
                \begin{pmatrix}
                    \phi_1(y) \\
                    \phi_2(y)
                \end{pmatrix}
            \right\|
            & = \max\{ \|\phi_1(x)-\phi_1(y)\| , \|\phi_2(x)-\phi_2(y)\| \} 
            \\ & \le \max\{K_1,K_2\} \|x-y\|,
        \end{align*}
        which allows us to conclude that $\phi\in\nN_{d_1,k_1+k_2}\left(W_1+W_2,\max\{L_1,L_2\},\max\{K_1,K_2\}\right)$.
        
        \item By ({\romannumeral 1}), without loss of generality, we assume $L_1=L_2=L$. We notice that $\phi$ can be parameterized by
        \[
            \theta = 
            \left(
            \begin{pmatrix}
                A_0 & b_0
            \end{pmatrix},
            \begin{pmatrix}
                A_1 & b_1
            \end{pmatrix},
            \ldots,
            \begin{pmatrix}
                A_L & b_L
            \end{pmatrix}
            \right),
        \]
        where
        \begin{align*}
           & A_0 :=
            \begin{pmatrix}
                A_0^{(1)} \\
                A_0^{(2)}
            \end{pmatrix},\
            b_0 :=
            \begin{pmatrix}
                b_0^{(1)} \\
                b_0^{(2)}
            \end{pmatrix},\quad
            A_\ell :=
            \begin{pmatrix}
                A_\ell^{(1)} & 0 \\
                0 & A_\ell^{(2)}
            \end{pmatrix},\
            b_\ell :=
            \begin{pmatrix}
                b_\ell^{(1)} \\
                b_\ell^{(2)}
            \end{pmatrix}, \quad \ell=1,\ldots,L-1,
            \\ &
            A_L :=
            \begin{pmatrix}
                c_1A_L^{(1)} & c_2A_L^{(2)} \\
            \end{pmatrix},\
            b_L := c_1b_L^{(1)}+c_2b_L^{(2)}.
        \end{align*}
        Then, for any $x,y\in\R^{d_1}$, we have that
        \[
            \|c_1\phi_1(x)+c_2\phi_2(x) - c_1\phi_1(y)-c_2\phi_2(y)\| \le (|c_1|K_1+|c_2|K_2) \|x-y\|,
        \]
        which means that $c_1 \phi_1 + c_2 \phi_2 \in \nN_{d_1,k_1}\left(W_1+W_2,\max\{L_1,L_2\}, |c_1| K_1 + |c_2|K_2\right)$.
	\end{enumerate}
\end{proof}

\subsection{Rademacher complexity}

We now introduce the Rademacher complexity, a standard tool for measuring the richness of a function class with respect to a probability distribution. This notion will be central to establishing lower bounds on the approximation error for norm constrained neural networks.

\begin{definition}\label{def:rad}
    Let $S \subseteq \R^d$. The \emph{Rademacher complexity} of $S$ is defined by
    \[
    \mathcal R_d(S) = \mathbb E_{\boldsymbol{\xi}} \left[\sup_{s \in S} \frac1d \left|\sum_{i=1}^d \xi_i s_i\right| \right],
    \]
    where $s=(s_1,\ldots,s_d) \in \R^d$ and $\boldsymbol{\xi} = \{\xi_i\}_{i=1,\ldots,d}$ is a sequence of i.i.d. Rademacher random variables (i.e. taking the values $-1$ and $1$ with equal probability).
\end{definition}

We now generalize~\cite[Lemma 2.3]{JIAO2023249} to get upper and lower bounds to the Rademacher complexity of samples of neural networks with an arbitrary activation function $\sigma$.

\begin{lemma}\label{lemma:rad_compl}
For any $x_1,\dots, x_n\in[-B,B]^d$ with $B \ge 1$, we define
\[
S := \left\{\left(\phi( x_1),\ldots,\phi( x_n)\right)\ :\ \phi\in\sNN_{d,1}(W,L,K)\right\} \subseteq \R^n, \qquad s := \max_{1\le j\le d}\left(\frac1n\sum_{i=1}^n x_{i,j}^2\right)^{1/2},
\]
where $x_{i,j}$ denotes the $j$-th component of $x_i$. Then the following statements hold.
\begin{enumerate}[label=(\roman*)]
\item \textbf{Upper bound.} If $\sigma$ is 1-Lipschitz, then
\[
\mathcal R_n(S) \le \frac{BK}{\sqrt n} \sqrt{2\left(L+1+\log(d)\right)}.
\]
\item \textbf{Lower bound for ReLU/LeakyReLU activation function.} If $\sigma(z)=\max\{z,0\}$ or $\sigma(z)=z^+ + \alpha z^-$ with $\alpha\in[0,1)$, $z^+=\max\{z,0\}$ and $z^-=-\min\{z,0\}$, and $W,L\in\N$ with $W\ge 2$ and $L \geq 1$, then
\[
\mathcal R_n(S) \ge \frac{1}{2\sqrt2} (1-\alpha) K \frac{s}{\sqrt n}.
\]      
\item \textbf{Lower bound for a general activation function.} If $W\in\N$, $W\ge2$, $\sigma\in C^2([-\delta,\delta])$, with $\sigma(0)=0$, $\sigma'(0)>0$ and  $\max_{|z|\le\delta}|\sigma''(z)|\le M$, then there exists $n_0=n_0(\sigma,B)$ such that, for all $n\ge n_0$,
\[
\mathcal R_n(S) \ge c_*(\sigma,B) \, K\,\frac{s^2}{n},
\qquad \text{with} \quad
c_*(\sigma,B):=\frac{\sigma'(0)}{8 \,M B^2}.
\]
\end{enumerate}
\end{lemma}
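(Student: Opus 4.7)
The plan is a depth-wise peeling argument. Write $\phi \in \sNN(W, L, K)$ as $\phi(x) = a^\top \sigma(\Phi_{L-1}(x))$, where the final row $a$ has $\ell_1$-norm bounded by a share of the budget $K$. Hölder's inequality with $\ell_1/\ell_\infty$ duality replaces the supremum over networks by the maximum over coordinates of the preceding representation, and a Massart-type finite-max bound contributes the $\sqrt{\log d}$ factor. Each activation is then peeled off using Talagrand's contraction lemma (since $\sigma$ is $1$-Lipschitz), at a cost of a factor $\sqrt 2$ per layer from handling the absolute value. Recursing across the $L+1$ layers accumulates the factor $\sqrt{2(L+1+\log d)}$, and the base case (a linear Rademacher complexity with inputs in $[-B,B]^d$) supplies the $BK/\sqrt n$ scaling.

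\textbf{Part (ii).} The plan is an explicit construction. Fix $j^* \in \argmax_{1\le j\le d}\bigl(n^{-1}\sum_i x_{i,j}^2\bigr)^{1/2}$ and consider the $2$-neuron, one-hidden-layer network
$$\phi^*(x) \;=\; \tfrac{K}{2}\bigl[\sigma(x_{j^*}) - \sigma(-x_{j^*})\bigr] \;=\; \tfrac{K(1-\alpha)}{2}\, x_{j^*},$$
using the (Leaky)ReLU identity $\sigma(z) - \sigma(-z) = (1-\alpha) z$. The weight matrices have $\infty$-operator norms $1$ and $K$, placing $\phi^* \in \sNN(2, 1, K)$; for $L>1$, pad with $2$-neuron identity-like blocks realized via $(1-\alpha)^{-1}[\sigma(\cdot) - \sigma(-\cdot)]$ and redistribute the budget across layers. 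Since $\phi^*$ lies in $S$,
$$\mathcal R_n(S) \;\ge\; \mathbb E_\xi \left|\tfrac{1}{n}\sum_i \xi_i\, \phi^*(x_i)\right| \;=\; \tfrac{K(1-\alpha)}{2n}\,\mathbb E_\xi\Bigl|\sum_i \xi_i x_{i,j^*}\Bigr|,$$
and Khintchine's inequality $\mathbb E|\sum_i \xi_i a_i| \ge \|a\|_2/\sqrt 2$ then delivers the claimed $\tfrac{1-\alpha}{2\sqrt 2} K\, s/\sqrt n$.

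\textbf{Part (iii).} The plan is the same construction, with the Taylor expansion of $\sigma$ at $0$ replacing the exact (Leaky)ReLU identity. Writing $\sigma(y) = \sigma'(0) y + r(y)$ with $|r(y)| \le \tfrac{M}{2} y^2$ for $|y|\le\delta$, consider
$$\phi^*(x) \;=\; b\bigl[\sigma(a x_{j^*}) - \sigma(-a x_{j^*})\bigr] \;=\; 2ab\sigma'(0)\, x_{j^*} + R(x), \qquad |R(x)| \le |b|\, M a^2 B^2,$$
with $|a|\le\min(\delta/B,1)$ and $|b|=K/2$, so that $\phi^* \in \sNN(2,1,K)$. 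Splitting the Rademacher sum into the linear main term (lower-bounded via Khintchine) and the Taylor remainder (upper-bounded via the triangle inequality) produces a lower bound of the form $C_1|a| - C_2|a|^2$. Tuning $|a|$ to its optimum --- of order $\sigma'(0)\, s/(M B^2 \sqrt n)$ --- makes the linear term dominate and yields the claimed $c_*(\sigma, B)\, K\, s^2/n$; the threshold $n_0(\sigma, B)$ is chosen so that this optimal $|a|$ still satisfies $|aB|\le\delta$. The main technical obstacle throughout is respecting the precise partition structure of the $\sNN(W,L,K)$ constraint --- product of expanding-layer norms bounded by $K$, contracting layers with norm $\le 1$ free --- rather than the looser Lipschitz bound, particularly when padding the shallow construction to depth $L > 1$ via activation-dependent identity-like blocks.
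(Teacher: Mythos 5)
Your parts (ii) and (iii) follow essentially the paper's route: exhibit an explicit width-$2$, depth-$1$ bias-free network inside the class that is (near-)linear in the best coordinate, lower-bound the Rademacher average by Khintchine's inequality, and (in (iii)) optimize the input scale against the Taylor remainder, with the admissibility of the optimal scale absorbed into $n_0$. Fixing $j^*$ up front instead of keeping the maximum over $j$ inside the expectation (the paper uses $\E[\max_j Y_j]\ge\max_j\E[Y_j]$) is an equivalent shortcut, and your explicit $(1-\alpha)^{-1}[\sigma(\cdot)-\sigma(-\cdot)]$ padding to depth $L$ is if anything more careful than the paper's appeal to the inclusion $\sNN(2,1,K)\subset\sNN(W,L,K)$.

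There are, however, two concrete problems. First, in (i) your peeling sketch does not produce the stated bound: a multiplicative cost of $\sqrt2$ per contraction step compounds to $2^{(L+1)/2}$, not to the additive $\sqrt{2(L+1+\log d)}$. The depth-additive form is precisely the point of the result of Golowich, Rakhlin and Shamir that the paper invokes (their Theorem 2): one must peel layers inside $\log\E\exp(\lambda\,\cdot)$ so that each factor of $2$ becomes an additive $\log 2$ in the exponent, rather than recursing the contraction lemma directly on $\mathcal R_n$. As written, your argument would only give an exponential-in-$L$ constant; either reproduce the exponential-moment argument or cite the theorem as the paper does. Second, in (iii) your weight allocation ($|b|=K/2$ on the output, linear coefficient $2ab\,\sigma'(0)$) gives, after optimizing $a$, the constant $\sigma'(0)^2/(4MB^2)$, whereas the lemma claims $c_*=\sigma'(0)/(8MB^2)$; your bound fails to dominate the stated one whenever $\sigma'(0)<1/2$. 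The paper avoids the extra factor of $\sigma'(0)$ by putting $K/\sigma'(0)$ on the output layer and restricting the input scale to $\eps\le\sigma'(0)$ (a condition subsumed in $n\ge n_0$), so that the linear term is $K\eps\,x_j$ with no residual $\sigma'(0)$; adopt that normalization (or restate the constant) to match the lemma as formulated.
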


\begin{proof}
We prove separately the three cases. \leavevmode

\begin{enumerate}[label=(\roman*)]
    \item By~\cite[Theorem 2]{pmlr-v75-golowich18a}, it holds that
    \[
    \mathcal R_n(S) \le \frac{BK}{\sqrt n} \sqrt{2\left(L+1+\log(d)\right)}.
    \]

    \item We show that the set $\sNN_{d,1}(W,L,K)$ contains a set of linear functions, providing a lower bound for the Rademacher complexity of $S$. For each coordinate $j\in\{1,\dots,d\}$, denoted by $e_j\in\R^d$ the $j$-th vector of the canonical basis, we construct a bias-free neural network $\phi_j$ on $\R^d$ of width 2, depth 1, and norm constraint $K$ as
    \[
    \phi_j(x) = A_1\sigma(A_{0,j}x), \quad x\in\R^d, \quad \text{where} \quad
    A_{0,j} = \begin{pmatrix}
        e_j^\top \\
        -e_j^\top
    \end{pmatrix}\in\R^{2\times d}
    \quad
    \text{and}
    \quad
    A_1 = \frac{K}{2}\begin{pmatrix}
        1 & -1
    \end{pmatrix}.
    \]
    We notice that $\|A_{0,j}\|=1$ and $\|A_1\|=K$, thus $\|A_{0,j}\|\|A_1\|=K$, and the function realized by the neural network is given by
    \[
    \phi_j(x)=\frac{K}{2} \left(\sigma( x_j)-\sigma(- x_j)\right)=\frac{K}{2}(1-\alpha) \bigl( x_j^+ - x_j^- \bigr)=\frac{K}{2}(1-\alpha) x_j.
    \]
Then, since $\sNN_{d,1}(2,1,K)\subset\sNN_{d,1}(W,L,K)$, we have that the space of bias-free neural networks contains the linear space $\{x \mapsto (1-\alpha)(K/2) x_j: j=1,\dots,d\}$, and we get
\begin{align*}
\mathcal R_n(S) &= \mathbb E_{\boldsymbol{\xi}} \left[\max_{\phi \in \sNN_{d,1}(W,L,K)} \frac1n \left|\sum_{i=1}^n \xi_i \phi( x_i)\right| \right] \ge \frac{(1-\alpha)K}{2} \frac1n \E_{\boldsymbol\xi}\left[\max_{1\le j\le d}\left|\sum_{i=1}^n \xi_i x_{i,j}\right|\right] \\
&\ge \frac{(1-\alpha)K}{2} \frac{1}{\sqrt2 \, n}\, \max_{1\le j\le d}\left(\sum_{i=1}^n x_{i,j}^2\right)^{1/2},
\end{align*}
by using $\E[\max_j Y_j]\ge \max_j \E[Y_j]$ and \emph{Khintchine inequality} $\E|\sum_i \xi_i u_i|\ge (1/\sqrt{2})\|u\|_2$ (\cite{haagerup1981best} and~\cite{ledoux1991probability}[Lemma 4.1]). Since $\sum_{i}  x_{i,j}^2 = n\,\left(\frac1n\sum_i x_{i,j}^2\right)$, we get the desired lower bound.

\item Similarly to the previous point, we show that the set of bias-free neural networks contains a set of functions that are linear up to an error term, providing a lower bound for the Rademacher complexity of $S$. 

We start by defining the remainder function
\[
    r(z):=\sigma(z)-\sigma'(0)\,z, \quad \text{with} \quad |r(z)|\le Mz^2\quad\text{for }|z|\le\delta, \quad M,\delta>0,
\]
and, for each coordinate $j\in\{1,\dots,d\}$ and $\eps>0$ to be constrained later, we construct a bias-free neural network of width 2, depth 1, and norm constraint $K$ as
\[
    \phi_{j,\eps}(x) = A_1\sigma(A_{0,j,\eps}x), \quad x\in\R^d, \quad \text{where} \quad
    A_{0,j,\eps} = \begin{pmatrix}
        \eps e_j^\top \\
        0
    \end{pmatrix}\in\R^{2\times d}
    \quad
    \text{and}
    \quad
    A_1 = \frac{K}{\sigma'(0)}\begin{pmatrix}
        1 & 0
    \end{pmatrix}.
\]
Here $\|A_{0,j,\eps}\|=\eps$ and $\|A_1\|=K/\sigma'(0)$, thus $\|A_{0,j,\eps}\|\|A_1\|=\eps K / \sigma'(0)$, which is bounded from above by $K$ if $\eps\le\sigma'(0)$. Furthermore, we choose $\eps<\frac{\delta}{B}$ to satisfy the condition that, for each $x\in[-B,B]^d$, $|\eps x_j|\le\delta$ for each coordinate $j$. Thus, the function realized by the neural network
\[
    \phi_j(x)=\frac{K}{\sigma'(0)}\sigma(\eps x_j)=K \eps x_j + \frac{K}{\sigma'(0)}r(\eps x_j) =: K\eps x_j + \Delta_{j,\eps}(x_j),
\]
is linear up to an error term $\Delta_{j,\eps}(x_j)$ bounded from above as follows:
\[
    \left| \Delta_{j,\eps}(x_j) \right| \le \frac{K}{\sigma'(0)} M \eps^2 x_j^2 \le \frac{K}{\sigma'(0)} M \eps^2 B^2.
\]
If we define $\mathcal F_\eps:=\{\phi_{j,\eps}\ : \ 1\le j\le d\}$, since $\sNN_{d,1}(2,1,K)\subset\sNN_{d,1}(W,L,K)$, we have shown that the space of bias-free neural networks $\sNN_{d,1}(W,L,K)$ contains the space $\mathcal F_\eps$. By using the reverse triangle inequality, the fact that $\E[\max_j Y_j]\ge \max_j \E[Y_j]$, the Khintchine inequality $\E|\sum_i \xi_i u_i|\ge (1/\sqrt{2})\|u\|_2$ (\cite{haagerup1981best} and~\cite{ledoux1991probability}[Lemma 4.1]), and $\sum_{i} x_{i,j}^2 = n\,\left(\frac1n\sum_i x_{i,j}^2\right)$, we obtain that
\begin{align*}
\mathcal R_n(S) \ge \mathcal R_n(\mathcal F_\eps) &= \frac1n \E_{\boldsymbol\xi}\left[\max_{1\le j\le d}\left|\sum_{i=1}^n \xi_i (K \eps x_{i,j} + \Delta_{j,\eps}( x_i)\right|\right] \\
&\ge \frac{1}{n}\E_{\xi}\left[\max_{1\le j\le d}\left|\sum_{i=1}^n \xi_i\,K \eps\ x_{i,j}\right|\right] -\frac{1}{n}\sum_{i=1}^n\max_{1\le j\le d} |\Delta_{j,\eps}( x_i)| \\
&\ge K \eps\,\frac{1}{\sqrt2}\,\frac{s}{\sqrt n}-\frac{K}{\sigma'(0)} \, M\,\eps^2B^2.
\end{align*}
Therefore
\begin{equation}\label{eq:lb}
    \mathcal R_n(\mathcal F_\eps) \ge K \left( \frac{s}{\sqrt {2n}}\eps-\frac{1}{\sigma'(0)} MB^2\eps^2\right) =: K(\alpha_1\eps - \alpha_2\eps^2),
\end{equation}
and the right-hand side is maximized at
\[
    \eps^\star=\frac{\alpha_1}{2\alpha_2} =\frac{\sigma'(0)}{2\sqrt{2} M B^2}\cdot\frac{s}{\sqrt n}.
\]
We require
\[
    \eps^\star \le \min \left\{ \frac{\delta}{B} , \sigma'(0) \right\},
\]
which holds for all $n\ge n_0(\sigma,B)$. Substituting $\eps^\star$ into \eqref{eq:lb} yields
\[
    \mathcal R_n(S)\ \ge\ \mathcal R_n(\mathcal F_{\eps^\star}) \ge  K\cdot \frac{\alpha_1^2}{4\alpha_2} = K\frac{\sigma'(0)}{8 \, M B^2}\cdot \frac{s^2}{n} = \frac{\sigma'(0)}{8 \, M B^2} K \frac{s^2}{n},
\]
which concludes the proof.
\end{enumerate}
\end{proof}

\begin{corollary}
     The upper and lower bounds in~\cref{lemma:rad_compl} remain valid for neural networks in $\nN(W,L,K)$.
\end{corollary}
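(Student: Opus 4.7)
The plan is to reduce the result to \cref{lemma:rad_compl} by invoking the sandwiching inclusions from \cref{proposition:inclusion}, namely $\sNN(W,L,K)\subset\nN(W,L,K)\subset\sNN(W+1,L,K)$, and then observe that Rademacher complexity is monotone with respect to set inclusion of the sample image set $S$. Concretely, for fixed points $x_1,\dots,x_n\in[-B,B]^d$, define
\[
S_{\sNN}^W := \{(\phi(x_1),\dots,\phi(x_n))\ :\ \phi\in\sNN_{d,1}(W,L,K)\}, \qquad S_{\nN}^W := \{(\phi(x_1),\dots,\phi(x_n))\ :\ \phi\in\nN_{d,1}(W,L,K)\},
\]
so that the inclusions of neural network classes yield $S_{\sNN}^W\subset S_{\nN}^W\subset S_{\sNN}^{W+1}$.

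For the upper bound, I would apply \cref{lemma:rad_compl}(i) to the larger class $\sNN_{d,1}(W+1,L,K)$ and note that the bound $\frac{BK}{\sqrt n}\sqrt{2(L+1+\log(d))}$ is independent of the width $W$. Since $\mathcal R_n$ is monotone in the set and $S_{\nN}^W\subset S_{\sNN}^{W+1}$, the same upper bound holds for $S_{\nN}^W$.

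For the lower bounds in (ii) and (iii), I would use the opposite inclusion $S_{\sNN}^W\subset S_{\nN}^W$, which by monotonicity of the Rademacher complexity gives $\mathcal R_n(S_{\nN}^W)\ge \mathcal R_n(S_{\sNN}^W)$. The assumption $W\ge 2$ in the lemma is preserved, so the bias-free bounds $\frac{1}{2\sqrt 2}(1-\alpha)K s/\sqrt n$ and $c_*(\sigma,B)\,K\,s^2/n$ are inherited verbatim.

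The only subtle point is verifying that the upper bound proof in~\cite{pmlr-v75-golowich18a} used in \cref{lemma:rad_compl}(i) indeed yields a width-free bound, so that enlarging $W$ to $W+1$ does not degrade the estimate; this is immediate from the form of the bound. No new calculation is needed, and there is no serious obstacle.
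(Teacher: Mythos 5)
Your proposal is correct and is essentially the paper's argument: the paper proves the corollary by invoking exactly the inclusions of \cref{proposition:inclusion}, with the lower bounds inherited from $\sNN(W,L,K)\subset\nN(W,L,K)$ and the upper bound from $\nN(W,L,K)\subset\sNN(W+1,L,K)$ together with the width-independence of the bound in \cref{lemma:rad_compl}(i). You merely spell out the monotonicity of the Rademacher complexity under set inclusion, which the paper leaves implicit.
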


This corollary follows immediately thanks to~\cref{proposition:inclusion}. Before moving on, we need the following definition and notations.

\begin{definition}
    Let $\rho$ be a metric on $\R^d$ and $S\subset\R^d$. For $\eps>0$, a set $U\subset S$ is called an \textbf{$\eps$-packing} of $S$ if any two distinct elements $x,y\in U$ satisfy $\rho(x,y)>\eps$. Furthermore, the $\eps$-packing number of $S$ is defined as
    \[
        \mathcal{N}_p(S,\rho,\eps) := \max\bigl\{|U| \ : \ U \text{ is an $\eps$-packing of $S$} \bigr\}. 
    \]
\end{definition}

\noindent
\textbf{Notations.} For a multi-index $\mathbf s \in \N^d$, we denote its 1-norm as
    \[
        \| \mathbf s \|_1 := |s_1| + \cdots + |s_d|.
    \]
    Similarly, we denote the partial derivative of a function $f$ with respect to a multi-index $\mathbf s \in \N^d$ as
    \[
        \partial^{\mathbf s} f := \frac{\partial^{s_1}}{\partial x_1^{s_1}} \cdots \frac{\partial^{s_d}}{\partial x_d^{s_d}} f.
    \]

\subsection{Main results} \label{subsec:main_results}

In this section, we state our main results. First, we define the class of smooth functions that we are approximating with norm constrained neural networks.

\begin{definition}[Lipschitz-regular functions]\label{def:lipschiotz-regular}
    Let $d \in \N$ and $r = m+ \beta >0$, where $m \in \N$ and $\beta \in (0,1]$. The class of Lipschitz-regular functions in $\R^d$ is defined as
    \[
        \Lip_{r}(\R^d) := \left\{ f : \R^d \rightarrow \R \ : \ \max_{ \|\mathbf s\|_1 \le m} \sup_{x\in \R^d} |\partial^{\mathbf s} f(x) | \le 1, \ \max_{\|\mathbf s\|_1 =m} \sup_{x \neq y} \frac{|\partial^{\mathbf s} f(x) - \partial^{\mathbf s} f(y)|}{\|x-y\|_\infty^\beta} \le 1 \right\}.
    \]
    We denote by $\Lip_r$ the restriction of this class to the unit cube $[0,1]^d$. Furthermore, if $f\in\Lip_{r}(\R^d)$, then we define its $\Lip_{r}(\R^d)$-norm as
    \[
        \|f\|_{\Lip_{r}(\R^d)} := \max_{ \|\mathbf s\|_1 \le m} \sup_{x\in \R^d} |\partial^{\mathbf s} f(x) | + \max_{\|\mathbf s\|_1 =m} \sup_{x \neq y} \frac{|\partial^{\mathbf s} f(x) - \partial^{\mathbf s} f(y)|}{\|x-y\|_\infty^\beta}.
    \]
    An equivalent norm would be the one with the sum ($\sum$) in place of the $\max$.
\end{definition}

The class $\Lip_{r}(\R^d)$ is the set of functions $m$ times continuously differentiable, whose derivatives are bounded, and their $m$-th derivatives are $\beta$-Hölder continuous (for example, $\Lip_{1}(\R^d)$ is the set of bounded 1-Lipschitz functions).

Then, we introduce our fundamental assumption on the activation function $\sigma$.

\begin{assumption} \label{ass:sigma_exp}
    We suppose that $\sigma$ satisfies
    \begin{equation*}
        \sigma(t)= a_1 t+a_2 t^2+a_3 t^3+r(t), \quad \text{with} \quad |r(t)|\le M |t|^4 \quad \text{for all } |t|\le \rho,
    \end{equation*}
    for some $0<\rho\le1$, $M>0$ and coefficients $a_1,a_2\neq 0,a_3\in\mathbb R$.
\end{assumption}

Our objective is to study how well norm constrained neural networks in $\nN(W,L,K)$ approximate smooth functions in $\Lip_r$. In particular, we estimate the maximum error performed when approximating any function $f \in \Lip_r$ by a neural network $\phi \in \nN(W,L,K)$, called approximation error, defined as follows
\[
    \mathcal E \left(\Lip_r, \nN(W,L,K) \right) := \sup_{f \in \Lip_r} \inf_{\phi \in \nN(W,L,K)} \| f - \phi \|_{\infty},
\]
where $\|\cdot\|_\infty$ is the sup-norm on continuous functions over $[0,1]^d$, i.e.
\[
	\| f - \phi \|_{\infty} := \max_{x \in [0,1]^d} \left| f(x) - \phi(x) \right|.
\]
Within this framework, the main results of this paper can be summarized in the following theorems.

\begin{theorem}\label{thm:main}
    We fix $d \in \N$ and $r =m + \beta$, where $m \in \N$ and $\beta \in (0,1]$. The following bounds hold.
    \begin{enumerate}[label=(\roman*)]
        \item \emph{\textbf{Upper bound.}} If Assumption \ref{ass:sigma_exp} holds, then there exist constants $c_1,c_2,C_1,C_2>0$ such that, for all $W\in\N$ with $W\ge c_1$, $K\in\R$ with $K\ge c_2W$, and $L\in\N$ with $L \ge 2\big\lceil\log_2 d\big\rceil$, it holds that
        \begin{equation}\label{eq:main_upper_smooth}
            \mathcal E\left(\Lip_r,\,\nN(W,L,K)\right) \le C_1\,W^{-r/d} + C_2\,K^{-1}.
        \end{equation}
        
       	\item \emph{\textbf{Lower bound.}} If $d > 2r$, $W,L \in \N$ with $W \ge 2$ and $K \ge 1$, then we have that
        \[
            \mathcal E (\Lip_r,\nN(W,L,K)) \gtrsim \left( K^2 L \right)^{-\frac{r}{d-2r}}.
        \]
	\end{enumerate}
\end{theorem}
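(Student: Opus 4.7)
The proof splits into two essentially independent parts. For the upper bound, I would follow the partition-of-unity plus Taylor-polynomial strategy announced in the abstract, carefully tracking the norm constraint at each step; for the lower bound, I would combine the Rademacher upper bound of Lemma~\ref{lemma:rad_compl}(i) with a complementary Rademacher lower bound for $\Lip_r$ on a regular grid.

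For the upper bound, the crucial building block is a norm constrained sub-network realizing the scalar product $(x,y)\mapsto xy$ on $[-1,1]^2$. Using Assumption~\ref{ass:sigma_exp} with $a_2\ne 0$, the even symmetrization
\[
    \frac{1}{8a_2\eps^2}\bigl[\sigma(\eps(x+y))+\sigma(-\eps(x+y))-\sigma(\eps(x-y))-\sigma(-\eps(x-y))\bigr]
\]
cancels the odd terms of the cubic expansion and recovers $xy$ up to an error of order $\eps^2$; since the Lipschitz constant of this block scales as $1/\eps$, a suitable choice of $\eps$ in terms of $K$ yields a product gate with Lipschitz constant $O(K)$ and approximation error $O(K^{-1})$. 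Iterating this gate through Proposition~\ref{proposition:properties_neural_netowrk}(ii) along a binary tree produces every monomial $x^{\mathbf s}$ with $\|\mathbf s\|_1\le m$. The second ingredient is a norm constrained partition of unity $\{\psi_{\mathbf i}\}_{\mathbf i}$ on $[0,1]^d$ with mesh $1/N$ and $N\asymp W^{1/d}$, each $\psi_{\mathbf i}(x)=\prod_{j=1}^d\psi_{i_j}(x_j)$ being realized by combining $d$ univariate bumps through the product gate in a binary tree of depth $\lceil\log_2 d\rceil$; the subsequent multiplication of $\psi_{\mathbf i}$ by the local Taylor polynomial of $f$ accounts for the hypothesis $L\ge 2\lceil\log_2 d\rceil$. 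For $f\in\Lip_r$ I would then set
\[
    \varphi(x):=\sum_{\mathbf i}\psi_{\mathbf i}(x)\,P_{\mathbf i}(x),\qquad P_{\mathbf i}(x):=\sum_{\|\mathbf s\|_1\le m}\frac{\partial^{\mathbf s}f(\mathbf i/N)}{\mathbf s!}(x-\mathbf i/N)^{\mathbf s},
\]
with coefficients bounded by $\|f\|_{\Lip_r}\le 1$, and assemble $\varphi$ into a single element of $\nN(W,L,K)$ via Proposition~\ref{proposition:properties_neural_netowrk}(iii)--(iv). The Taylor remainder contributes $O(N^{-r})=O(W^{-r/d})$ and the monomial-approximation error contributes $O(K^{-1})$, which is exactly~\eqref{eq:main_upper_smooth}.

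For the lower bound, fix $n\in\N$ and let $x_1,\dots,x_n$ be the vertices of a regular grid of $[0,1]^d$ with mesh $n^{-1/d}$. Writing
\[
    S_\phi:=\{(\phi(x_1),\dots,\phi(x_n)):\phi\in\nN(W,L,K)\},\qquad S_f:=\{(f(x_1),\dots,f(x_n)):f\in\Lip_r\},
\]
Lemma~\ref{lemma:rad_compl}(i) gives $\mathcal R_n(S_\phi)\lesssim K\sqrt{L+\log d}/\sqrt n$. On the other hand, I would construct $n$ disjoint bumps in $\Lip_r$, each supported in one grid cell and of height $\sim n^{-r/d}$, and apply the Khintchine inequality to their sign combinations to obtain the matching lower bound $\mathcal R_n(S_f)\gtrsim n^{-r/d}$; the hypothesis $d>2r$ guarantees that the two scalings are compatible with the Kolmogorov--Tikhomirov entropy of $\Lip_r$. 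Setting $\eta:=\mathcal E(\Lip_r,\nN(W,L,K))$, a triangle-type argument immediately yields $\mathcal R_n(S_f)\le \mathcal R_n(S_\phi)+\eta$, hence
\[
    n^{-r/d}\lesssim \frac{K\sqrt{L+\log d}}{\sqrt n}+\eta,
\]
and balancing the two right-hand terms by optimizing $n\asymp (K^2L)^{d/(d-2r)}$ produces $\eta\gtrsim (K^2L)^{-r/(d-2r)}$.

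The most delicate part of the upper bound is the norm bookkeeping: every product block, every monomial tree, and every partition-of-unity cell consumes a fraction of the budget $K$, and the hypothesis $K\ge c_2 W$ appears precisely because this budget must be shared across the $\sim W$ partition cells simultaneously. In the lower bound, the only nontrivial step is obtaining the sharp Rademacher lower bound $n^{-r/d}$ for $\Lip_r$ when $d>2r$, for which an explicit bump packing combined with Khintchine is preferable to a chaining argument that would diverge at small scales.
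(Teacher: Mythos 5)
Your upper bound follows essentially the paper's own route: a symmetrized product gate built from the even part of $\sigma$ (the paper uses $\tfrac12\bigl(S(x+y)-S(x)-S(y)\bigr)$ with the two-neuron square block of \cref{lemma:approximation_x2_regular}, you use the four-term polarization $\sigma(\eps(x+y))+\sigma(-\eps(x+y))-\sigma(\eps(x-y))-\sigma(-\eps(x-y))$; both cancel the odd terms of Assumption \ref{ass:sigma_exp} and give error $\mathcal O(\eps^2)$), a binary tree of depth $2\lceil\log_2 d\rceil$ for monomials as in \cref{lemma:approx_xy_d}, and a partition of unity with local Taylor polynomials assembled through \cref{proposition:properties_neural_netowrk} as in \cref{lemma:approximation_functions}, with the same balance $N\asymp W^{1/d}$ and error $\mathcal O(W^{-r/d})+\mathcal O(K^{-1})$.

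Your lower bound, however, takes a genuinely different and simpler route. The paper packs exponentially many sign functions $h_a$ on a grid (coding lemma), bounds the packing number of network evaluations by a Sudakov-type entropy inequality in terms of $K^2L/\eps^2$ times a logarithmic factor, controls that factor with the Rademacher \emph{lower} bound of \cref{lemma:rad_compl}(iii), and concludes by contradiction. You instead transfer directly at the level of Rademacher complexities: if every $f\in\Lip_r$ is within $\eta$ of $\nN(W,L,K)$ in sup norm, then $\mathcal R_n(S_{\Lip_r})\le \mathcal R_n(S_\Phi)+\eta$ on any $n$ grid points, the network side is $\lesssim K\sqrt{L+\log d}/\sqrt n$ by \cref{lemma:rad_compl}(i), the $\Lip_r$ side is $\gtrsim n^{-r/d}$ from disjoint bumps, and optimizing $n\asymp (K^2L)^{d/(d-2r)}$ (which is where $d>2r$ enters) yields $\eta\gtrsim (K^2L)^{-r/(d-2r)}$. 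This dispenses with packing numbers, the coding argument, the Sudakov bound and part (iii) of \cref{lemma:rad_compl} altogether, at no loss in the rate; the constants absorb the same $\log d$ as in the paper.

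Two local slips, neither fatal but worth fixing. First, Khintchine applied to a fixed sign combination of the bumps only gives $\mathcal R_n(S_{\Lip_r})\gtrsim n^{-r/d-1/2}$, and with that estimate the balancing step collapses (the resulting lower bound on $\eta$ is vacuous for $K\ge1$, $L\ge1$); the correct and easier step is to keep the supremum over all $2^n$ sign patterns and choose $a_i=\xi_i$, which gives $\frac1n\sup_a\bigl|\sum_i\xi_i a_i\bigr|\,c_0\,n^{-r/d}=c_0\,n^{-r/d}$ exactly — the bound you actually use downstream, so the argument is repaired in one line. Second, in the product gate the quantity constrained in $\nN(W,L,K)$ is the product of layer norms with the $\max\{\cdot,1\}$ convention, which for your gate is of order $\eps^{-2}$, not the true Lipschitz constant of order $\eps^{-1}$; choosing $\eps\asymp K^{-1/2}$ recovers precisely the error-versus-constraint trade-off $\mathcal O(K^{-1})$ of \cref{lemma:approx_xy} needed in \eqref{eq:main_upper_smooth}, so only the intermediate sentence about the Lipschitz constant is off, not the conclusion.
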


Finally, we present an alternative version of the approximation upper bound, where the approximation error is measured in probability. If $f \in \Lip_r$, $\eps>0$ and $\delta\in(0,1)$, we say that a neural network $\phi \in \nN(W,L,K)$ approximates $f$ on $[0,1]^d$ up to error $\eps$ with probability at most $1-\delta$ if
\[
    \mathbb{P}(\{x\in[0,1]^d  \ : \ |f(x)-\phi(x)|\le\eps\})\ge1-\delta.
\]
For simplicity, we write $\mathbb{P}(\{x\in[0,1]^d  \ : \ |f(x)-\phi(x)|\le\eps\})$ as $\mathbb{P}( |f(x)-\phi(x)|\le\eps)$.

\begin{theorem}\label{thm:main_pr}
We fix $d \in \N$ and $r =m + \beta$, where $m \in \N$ and $\beta \in (0,1]$, and we suppose that the activation function $\sigma$ satisfies Assumption \ref{ass:sigma_exp}. Then, there exist constants $c_1,c_2,C_1,C_2,C_3>0$ such that, for all $W\in\N$ with $W\ge c_1$, $K\in\R$ with $K\ge c_2W$, and $L\in\N$ with $L \ge 2\big\lceil\log_2 d\big\rceil$, it holds that, for any $\eps>0$,
\begin{equation}\label{eq:main_upper_smooth_pr}
   \mathbb{P}\Biggl(\mathcal E\left(\Lip_r,\,\nN(W,L,K)\right) \le  \underbrace{C_1\left(W^{-r/d}+ K^{-1}\right)}_{\text{deterministic error}} + \underbrace{ \eps }_{\text{probabilistic error}}\Biggr) \geq \biggl( 1 - C_2 \exp\biggl( -C_3 K \eps^2  \biggr) \biggr).
\end{equation} 

\end{theorem}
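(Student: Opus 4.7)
\textbf{Proof plan for \cref{thm:main_pr}.}

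The plan is to replace the deterministic network of \cref{thm:main}(i) by a partially randomized one and to control the stochastic deviation via a Hoeffding-type concentration inequality. The shape of the bound $1 - C_2\exp(-C_3 K\eps^2)$ is itself a strong hint about the strategy: it is the signature of averaging $N\asymp K$ i.i.d.\ bounded contributions, so the whole game is to recognize a suitable empirical average inside the constructive proof of~\eqref{eq:main_upper_smooth} and to randomize it.

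Concretely, I would start by invoking \cref{thm:main}(i) to obtain $\phi^\star\in\nN(W,L,K)$ with $\|f-\phi^\star\|_\infty \le C_1 W^{-r/d} + C_2 K^{-1}$. The construction in \cref{sec:upper_bound} expresses $\phi^\star$ as a sum of local contributions $\psi_1,\dots,\psi_M$ obtained through a partition of unity on a grid in $[0,1]^d$ combined with local Taylor polynomials whose monomials are implemented via \cref{ass:sigma_exp}. I would then rewrite $\phi^\star$ in the form $\phi^\star(x) = \frac{1}{M}\sum_{j=1}^M a_j\psi_j(x)$ with $|a_j|\le C$ and each $\psi_j$ of uniformly bounded norm, and sample $N\asymp K$ indices $\omega_1,\dots,\omega_N$ i.i.d.\ from the discrete law on $\{1,\dots,M\}$ whose expectation reproduces $\phi^\star$. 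Setting
\begin{equation*}
    \phi_\omega(x) := \frac{1}{N}\sum_{i=1}^N a_{\omega_i}\psi_{\omega_i}(x),
\end{equation*}
the linear-combination rule in \cref{proposition:properties_neural_netowrk}(iv) ensures that $\phi_\omega\in\nN(W,L,K)$ almost surely, while $\EE_\omega[\phi_\omega(x)] = \phi^\star(x)$ pointwise.

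For the concentration step, fix $x\in[0,1]^d$ and write $\phi_\omega(x) - \phi^\star(x)$ as a mean of i.i.d.\ centered random variables bounded by some $R=O(1)$. Hoeffding's inequality then yields
\begin{equation*}
    \mathbb P\bigl(|\phi_\omega(x)-\phi^\star(x)|>\eps\bigr) \le 2\exp\bigl(-N\eps^2/(2R^2)\bigr) \le C_2\exp(-C_3 K\eps^2),
\end{equation*}
after absorbing constants. Combined with the triangle inequality and the deterministic estimate on $\|f-\phi^\star\|_\infty$, this gives \eqref{eq:main_upper_smooth_pr}. To upgrade this pointwise estimate to the sup-norm appearing in $\mathcal E$, I would cover $[0,1]^d$ by an $(\eps/K)$-net, apply a union bound, and use that both $f$ and $\phi_\omega$ are Lipschitz (the latter with constant $K$) to transfer the estimate between net points; the net has cardinality $O((K/\eps)^d)$, and the resulting logarithmic factor is absorbed into $C_2,C_3$ in the regime where $K\eps^2$ dominates $d\log(K/\eps)$.

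The main obstacle is the first part of the argument: extracting, from the rather intricate deterministic construction of \cref{sec:upper_bound}, an explicit representation of $\phi^\star$ as a mean of i.i.d.-amenable atoms with uniformly bounded amplitudes, in such a way that (i) randomization preserves membership in $\nN(W,L,K)$ almost surely, and (ii) the variance scaling yields precisely the exponent $K\eps^2$ rather than, say, $\sqrt{K}\eps^2$ or $K\eps$. A secondary but more routine difficulty is making the uniform-in-$x$ upgrade quantitatively cheap, which requires tracking the Lipschitz constant of $\phi_\omega$ through the randomization and choosing the proportionality $N\asymp K$ with the correct constant so that the net cardinality does not outweigh the concentration gain.
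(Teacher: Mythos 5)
There is a genuine gap, and it sits exactly where you flagged "the main obstacle": the representation $\phi^\star(x)=\frac1M\sum_{j=1}^M a_j\psi_j(x)$ with $|a_j|\le C$ and uniformly bounded atoms simply does not exist for the construction of \cref{lemma:approximation_functions}. The atoms there are the localized pieces $\rho_j\,\phi_{k,j}$ of a partition of unity: at any fixed $x\in[0,1]^d$ only $O(1)$ of the $M=N_\gamma$ atoms are nonzero. If an i.i.d.\ index $\omega$ on $\{1,\dots,M\}$ is to satisfy $\EE[a_\omega\psi_\omega(x)]=\phi^\star(x)=O(1)$ while all but $O(1)$ of the equally weighted summands vanish at $x$, the nonzero summands must have magnitude of order $M$; so Hoeffding applies only with $R\asymp M$ (or, with Bernstein, variance $\asymp M$), and the exponent degrades from $N\eps^2$ to roughly $(N/M)\eps^2$. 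Since $M\asymp W$, reaching the claimed exponent $K\eps^2$ would force $N\gtrsim MK$, and in any case drawing $N\asymp K$ fresh atoms also breaks the width budget ($N\cdot w_{\mathrm{atom}}\gg W$ when $K\gg W$) unless repeated atoms are merged back into a reweighting of the original architecture, which your proposal does not address. In short, Maurey-type subsampling of a partition-of-unity decomposition cannot deliver the $\exp(-C_3K\eps^2)$ tail; no choice of sampling law fixes this, because the law cannot depend on $x$.

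The paper resolves this by injecting the randomness one level lower, inside the monomial approximators rather than across the local Taylor pieces: in \cref{lemma:approx_random_1} and \cref{lemma7} the symmetric two- and four-neuron blocks approximating $x^2$ and $xy$ are replicated $k$ times with i.i.d.\ random weights $w_i=k^{-\alpha/2}\sqrt{U_i}$, so that at every $x$ all $k$ summands are active and $O(1)$-bounded after rescaling; Bernstein's inequality then gives a tail $\exp(-k(\eps-\eps_0)^2/B)$, which is propagated through the product tree (\cref{lemma:random_approx_xy_d}) and the Taylor/partition-of-unity assembly (\cref{lem:lem9}), and $k$ is tied to $K$ through the norm bookkeeping, producing the $K\eps^2$ exponent. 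Note also that the paper's guarantee is pointwise in $x$ (in the sense of its definition of probabilistic approximation preceding \cref{thm:main_pr}); your additional $\eps$-net/union-bound step aims at a uniform sup-norm statement the paper does not prove, and its $d\log(K/\eps)$ cost is not uniformly absorbable for all $\eps>0$, so even that upgrade would need extra care. Your concentration-plus-triangle-inequality skeleton is the right shape, but the decomposition you randomize must be changed to one whose summands are globally active and bounded, which is precisely the paper's random-feature construction for the monomials.
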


\begin{remark}
    We notice that the approximation error decomposes into a deterministic term (from approximating monomials by neural networks) and a probabilistic term (from randomized neural network weights). Furthermore, the interplay between the smallness of the deterministic error and the growth of the network width $W$ and weight norms $K$ persists: as $K \to \infty$, the probabilistic error vanishes almost surely, while the deterministic error can be controlled also by increasing $W$.
\end{remark}

\section{Upper bound in the deterministic case} \label{sec:upper_bound}

We establish in this section the upper bound in~\cref{thm:main} through a constructive approach. Our strategy approximates functions $f\in\Lip_r$ by approximating their local Taylor polynomials with norm constrained neural networks and by gluing those approximations together with a partition of unity.

By following the methodology developed in~\cite{yarotsky2017error,yarotsky2018optimal,yarotsky2020phase,lu2021deep,JIAO2023249}, we begin with the one-dimensional approximation of $f(x)=x^2$ on $[0,1]$. This explicitly reveals two key aspects of our construction: the cancellation mechanisms that enable efficient approximation, and the precise scaling of network parameters with respect to the approximation error.

Having understood these fundamental principles, we then extend our construction to high degree polynomials in arbitrary dimensions. Since the Taylor expansion is linear, this naturally generalizes to arbitrary functions in $\Lip_r$.

\subsection{Quadratic approximation}

We approximate in this section $f(x) = x^2$ on $[0,1]$ with norm constrained neural networks. Our analysis yields explicit bounds that reveal the interplay between network architecture (width and depth), constraints on the weight norm $K$, and approximation quality. Specifically, we demonstrate how these architectural choices jointly determine both the convergence rate and the resulting uniform approximation error.

\begin{lemma}\label{lemma:approximation_x2_regular}
Let $\alpha>0$ and let $\sigma:\R\rightarrow\R$ be an activation function satisfying Assumption \ref{ass:sigma_exp}. For any $k\in\N$, $k\ge k_0:=\max\{1,\rho^{-2/\alpha}\}$, there exists a neural network $\phi_{k,\alpha}\in\nN(2,2,K_{k,\alpha})$, with
\begin{equation} \label{def:K for x^2}
K_{k, \alpha} := \frac{k^\alpha}{|a_2|},
\end{equation}
such that $\phi_{k,\alpha}([0,1])\subseteq [0,1]$ and $\phi_{k,\alpha}(x)$ approximates the quadratic function $f(x) = x^2$ with the following error
\begin{equation} \label{eq:approximation_x2}
    \max_{x \in [0,1]} \left|x^2-\phi_{k,\alpha}(x)\right| \le \frac{M}{|a_2|}\,k^{-\alpha}.
\end{equation}
\end{lemma}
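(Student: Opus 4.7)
The plan is to exploit the even-symmetrization of $\sigma$ to isolate the quadratic term. From \cref{ass:sigma_exp}, for every $|t|\le\rho$ we have
$$\sigma(t) + \sigma(-t) = 2a_2\, t^2 + r(t) + r(-t),$$
so the linear and cubic contributions cancel identically and only the $O(t^4)$ remainder survives. With the scaling $\eps := k^{-\alpha/2}$, the natural candidate is
$$\phi_{k,\alpha}(x) := \frac{1}{2a_2\eps^2}\bigl(\sigma(\eps x) + \sigma(-\eps x)\bigr),$$
which I realize as a bias-free, width-$2$, one-hidden-layer network with $A_0 = (\eps,-\eps)^\top \in \R^{2\times1}$ and $A_1 = \frac{1}{2a_2\eps^2}(1,1)\in\R^{1\times2}$. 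Embedding into $\nN(2,2,K_{k,\alpha})$ then follows from \cref{proposition:inclusion} and \cref{proposition:properties_neural_netowrk}(i), without change in the norm budget.

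I would then compute the operator norms and the approximation error in parallel. Since $k\ge1$ and $\alpha>0$, the first layer has $\|A_0\|_\infty = \eps\le 1$, while the output layer has $\|A_1\|_\infty = \frac{1}{|a_2|\eps^2} = \frac{k^\alpha}{|a_2|} = K_{k,\alpha}$, so the weight-norm product matches the claimed constant exactly. The hypothesis $k\ge\rho^{-2/\alpha}$ guarantees $\eps\le\rho$, so for every $x\in[0,1]$ the pointwise remainder bound $|r(\eps x)|\le M(\eps x)^4$ applies. Substituting the expansion gives
$$\phi_{k,\alpha}(x) - x^2 = \frac{r(\eps x) + r(-\eps x)}{2a_2\eps^2},$$
and bounding the numerator by $2M\eps^4 x^4$ yields $|\phi_{k,\alpha}(x) - x^2|\le\frac{M\eps^2 x^4}{|a_2|}\le\frac{M}{|a_2|}k^{-\alpha}$, which is exactly~\eqref{eq:approximation_x2}.

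The most delicate point is the strict containment $\phi_{k,\alpha}([0,1])\subseteq[0,1]$. Since the expansion starts at order one, $\sigma(0)=0$, hence $\phi_{k,\alpha}(0)=0$, but the error estimate above only yields $\phi_{k,\alpha}(x)\in[x^2-\eta,\,x^2+\eta]$ with $\eta = Mk^{-\alpha}/|a_2|$, so the image a priori sits in $[-\eta,1+\eta]$ rather than $[0,1]$. I would patch this by a small multiplicative correction of the output layer — for instance, dividing the prefactor by $(1+\eta)$ — which forces $\phi_{k,\alpha}(1)\le1$, preserves the error rate up to a constant absorbed into $M$, and keeps the norm budget $K_{k,\alpha}$ intact, as the rescaling factor is $\le 1$. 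The Taylor calculation itself is routine; this final bookkeeping of image containment against the exact norm constraint is where the main care is required.
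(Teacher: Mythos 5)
Your symmetrization step, the choice $\eps=k^{-\alpha/2}$, the Taylor estimate, and the norm bookkeeping ($\|A_0\|_\infty=\eps\le 1$, $\|A_1\|_\infty=k^\alpha/|a_2|$) coincide with the paper's construction and are correct. The genuine gap is in your fix for the range constraint. Writing $\eta:=Mk^{-\alpha}/|a_2|$ and $\phi(x)=x^2+e(x)$ with $|e(x)|\le \eta x^4$, dividing the output layer by $(1+\eta)$ maps the a priori image $[-\eta,1+\eta]$ into $[-\eta/(1+\eta),\,1]$: it enforces the upper end but does nothing about possible negative values, so $\phi_{k,\alpha}([0,1])\subseteq[0,1]$ is not established. (Nonnegativity can be rescued by the sharper pointwise bound $\phi(x)\ge x^2(1-\eta x^2)\ge 0$, but only when $\eta\le 1$; for $\eta>1$ --- which Assumption \ref{ass:sigma_exp} does not exclude --- your construction can genuinely dip below $0$, e.g.\ if $r(t)=-Mt^4$, and a separate argument is needed.) Moreover, the rescaling spoils the claimed constant: $\bigl|x^2-\phi(x)/(1+\eta)\bigr|=\frac{|\eta x^2-e(x)|}{1+\eta}$ can reach $\frac{2\eta}{1+\eta}>\eta$ when $\eta<1$, so you prove at best an error of order $2M|a_2|^{-1}k^{-\alpha}$. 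This cannot be ``absorbed into $M$'': $M$ is fixed by Assumption \ref{ass:sigma_exp} and appears verbatim in \eqref{eq:approximation_x2}, and the explicit constant is propagated into the later lemmas.

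The paper resolves exactly this point differently: it composes $\Phi_{k,\alpha}$ with the clipping network $\psi(z)=\relu(z)-\relu(z-1)\in\nN(2,1,1)$, which is legitimate because the paper explicitly allows different neurons to carry different activation functions. Since $\psi$ is $1$-Lipschitz, equals the identity on $[0,1]$, and $x^2\in[0,1]$, this enforces the range constraint exactly and preserves the error bound with the exact constant $M/|a_2|$; the extra layer is also precisely what makes the network have depth $2$ with norm budget unchanged. To repair your argument you would either adopt this clipping step, or supply the $\phi(x)\ge x^2(1-\eta x^2)$ lower bound together with a trivial treatment of the regime $\eta\ge 1$ (e.g.\ the zero network) and redo the error estimate so that the stated constant survives.
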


\begin{proof}
We define the width-$2$ neural network
\[
\Phi_{k,\alpha}(x) := d_k \, \Big(\sigma(wx)+\sigma(-wx)\Big).
\]
We set $w:=k^{-\alpha/2}$. For $k\ge k_0$, we have that $|wx|\le w\le \rho$ for all $x\in[0,1]$, so both $\sigma(wx)$ and $\sigma(-wx)$ can be expanded within this radius. Odd terms cancel by symmetry, hence we have that
\[
 \Phi_{k,\alpha}(x) = d_k \, \Big( 2a_2 w^2 x^2 + r(wx)+r(-wx) \Big) .
\]
By choosing $d_k = \frac{1}{2a_2 w^2}$, we ensure that the leading term of the expansion has a unit coefficient
\[
  \Phi_{k,\alpha}(x) = x^2 + \frac{k^\alpha}{2a_2}\left(r\bigl(k^{-\alpha/2}x\bigr)+r\bigl(-k^{-\alpha/2}x\bigr)\right).
\]
The error term in this expansion is bounded
\[
  \bigl|x^2-\Phi_{k,\alpha}(x)\bigr| \le \frac{k^\alpha}{2|a_2|} \,\left( \bigl|r\bigl(k^{-\alpha/2}x\bigr) \bigr| + \bigl| r\bigl(-k^{-\alpha/2}x\bigr) \bigr|\right) \le \frac{M}{|a_2|}\,k^{-\alpha},
\]
since $|r(\pm wx)|\le M|wx|^4\le M w^4$, proving the approximation error bound. Moreover, $\Phi_{k,\alpha}\in \nN \left(2,1,K_{k, \alpha}\right)$ with the norm constraint $K_{k, \alpha}$ as in~\eqref{def:K for x^2} due to
$k^{-\alpha/2} \le 1$. Width and depth of $\Phi_{k,\alpha}$ follow from the explicit construction. We note however that $\Phi_{k,\alpha}$ does not satisfy the range constraint. Indeed,
\[
    \Phi_{k,\alpha}\bigl([0,1] \bigr)\subset \bigl[-C_0 k^{-\alpha},\,1+C_0 k^{-\alpha}\bigr], \qquad \text{where } C_0:=M/|a_2|.
\]
We define $\psi(z):=\mathrm{ReLU}(z)-\mathrm{ReLU}(z-1)\in\nN(2,1,1)$ and we set $\phi_{k,\alpha}:=\psi\circ \Phi_{k,\alpha}$. Then $\phi_{k,\alpha}([0,1])\subseteq [0,1]$ and
\[
  \bigl|x^2-\phi_{k,\alpha}(x)\bigr| = \bigl|\psi\left(\Phi_{k,\alpha}(x)\right)-\psi\left(x^2\right)\bigr| \le \bigl|\Phi_{k,\alpha}(x)-x^2\bigr| \le \frac{M}{|a_2|}\,k^{-\alpha}, \qquad \text{for all } x \in [0,1],
\]
since $\psi$ is $1$-Lipschitz and $\psi(z)=z$ for all $z\in[0,1]$. In particular, $\phi_{k,\alpha}\in \nN \left(2,2,K_{k, \alpha}\right)$ as a consequence of the composition rule in~\cref{proposition:properties_neural_netowrk}. The error bound~\eqref{eq:approximation_x2} follows.
\end{proof}

    

\begin{remark}[Optimality and extensions]
Two natural questions arise from~\cref{lemma:approximation_x2_regular}.
\begin{enumerate}
    \item[\textnormal{a)}] Can exact clipping be achieved without ReLU activations?
    \item[\textnormal{b)}] Is the error norm trade-off optimal?
\end{enumerate}
We answer both in~\cref{appendix:additional}:~\cref{prop:no_clipping} shows that exact clipping requires piecewise linear activations, highlighting the role of ReLU-based functions.~\cref{prop:opt_scaling_even2} establishes the optimality of~\eqref{eq:approximation_x2}, confirming that no significant improvement in the error norm balance is possible within the norm constrained setting.
\end{remark}

\subsection{Approximation of multivariate monomials} 

We demonstrate in this section that monomials of the form $f(x_1, \dots, x_d) = x_1 x_2 \cdots x_d$ can be approximated by neural networks with an error of $\mathcal{O}(k^{-\alpha})$. We start from the approximation of $f(x_1, x_2) = x_1x_2$ by using the result of~\cref{lemma:approximation_x2_regular}. The networks are designed with appropriate weights, depths, and norms.

\begin{lemma}\label{lemma:approx_xy}
Let $\alpha>0$ and let $\sigma:\R\rightarrow\R$ be an activation function satisfying Assumption \ref{ass:sigma_exp}. For any $k\in\N$, $k\ge k_0:=\max\{1,(2/\rho)^{2/\alpha}\}$, there exists a neural network $\phi_{k,\alpha} \in \nN \bigl(6,2,K_{k,\alpha}\bigr)$, with
\[
K_{k,\alpha}=\tfrac32 \tfrac{k^\alpha}{|a_2|},
\]
such that $\phi_{k,\alpha}\bigl( [-1,1]^2 \bigr) \subseteq [-1,1]$ and $\phi_{k,\alpha}(x,y)$ approximates the function $f(x,y) = xy$ with following error estimate
\begin{equation} \label{lem3:err_estim}
\max_{(x,y) \in [-1,1]^2} \, \bigl|xy-\phi_{k,\alpha}(x,y)\bigr| \le 9\frac{M}{|a_2|}\, k^{-\alpha}.
\end{equation}
\end{lemma}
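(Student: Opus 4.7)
The plan is to use a polarization identity to reduce the approximation of $xy$ to three approximations of squares, for which \cref{lemma:approximation_x2_regular} already provides us a construction. Concretely, I rely on
\[
xy = \tfrac{1}{2}\bigl[(x+y)^2 - x^2 - y^2\bigr], \qquad (x,y) \in [-1,1]^2,
\]
so that three applications of the square-approximation construction suffice. The factor $1/2$ and the fact that three parallel copies must be assembled explain both the $\tfrac{3}{2}$ in the norm constant and the width $6 = 3 \times 2$ in the statement.

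First I would handle the two subnetworks approximating $x^2$ and $y^2$ directly with \cref{lemma:approximation_x2_regular}: the building block $\Phi_{k,\alpha}(z) = d_k(\sigma(wz)+\sigma(-wz))$ is even in $z$, so it approximates $z^2$ on $[-1,1]$ with the same error $M/|a_2|\,k^{-\alpha}$ and norm constraint $K = k^\alpha/|a_2|$ as on $[0,1]$. For the third square $(x+y)^2$, the argument ranges in $[-2,2]$, so I reuse the same construction with $w = k^{-\alpha/2}$; the admissibility condition $|w(x+y)|\le\rho$ becomes $2w\le\rho$, which matches the hypothesis $k \ge (2/\rho)^{2/\alpha}$. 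The remainder bound then inflates by the factor $(2w)^4/w^4 = 16$, giving an error $\le 16\,M/|a_2|\,k^{-\alpha}$ for this third block.

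I then assemble $\phi_{k,\alpha}(x,y)$ as the $(\tfrac{1}{2},-\tfrac{1}{2},-\tfrac{1}{2})$ linear combination of these three subnetworks, using the composition and linear-combination rules of \cref{proposition:properties_neural_netowrk}(ii),(iv). The triangle inequality then yields the error bound $\tfrac{1}{2}(16+1+1)\,M/|a_2|\,k^{-\alpha} = 9\,M/|a_2|\,k^{-\alpha}$, matching~\eqref{lem3:err_estim}. For the norm constraint, the key observation is that after composing each subnetwork with its input-side affine map ($(x,y)\mapsto x+y$, $(x,y)\mapsto x$, $(x,y)\mapsto y$), the first hidden layer has operator norm bounded by $2w \le 1$ for $k \ge k_0$, so it falls outside the index set $I$ in the definition of $\nN(W,L,K)$ and does not contribute to the product defining the norm constraint. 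Each subnetwork thus contributes exactly $K$, and the three-term linear combination with coefficients $\pm 1/2$ produces the total $\tfrac{1}{2}K + \tfrac{1}{2}K + \tfrac{1}{2}K = \tfrac{3}{2}K = \tfrac{3}{2}k^\alpha/|a_2|$.

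The main subtlety I anticipate is enforcing the strict range inclusion $\phi_{k,\alpha}([-1,1]^2)\subseteq[-1,1]$ without inflating the depth past $2$. The three subnetworks have natural output ranges $[0,4]$, $[0,1]$, $[0,1]$, so the linear combination lies a priori in $[-1,2]$ and only the approximation bound pulls it close to $xy \in [-1,1]$. My plan is to replace the ReLU clipping of \cref{lemma:approximation_x2_regular} in the $(x+y)^2$ block with one that clips to the wider range $[0,4]$, and to enforce the final clipping to $[-1,1]$ by absorbing it into the last affine layer of the linear combination (for instance via a ReLU combination of the form $z \mapsto \relu(z+1) - \relu(z-1) - 1$), verifying that the resulting norm contribution remains within $\tfrac{3}{2}K$. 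This careful bookkeeping, rather than the polarization step itself, is where I expect the proof to be the most delicate.
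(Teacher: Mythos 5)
Your approach is essentially the paper's: the identity $xy=\tfrac12\bigl((x+y)^2-x^2-y^2\bigr)$ applied to the symmetric two-neuron block of \cref{lemma:approximation_x2_regular}, with the same error split $\tfrac12(16+1+1)\,\tfrac{M}{|a_2|}k^{-\alpha}=9\tfrac{M}{|a_2|}k^{-\alpha}$ (the hypothesis $k\ge(2/\rho)^{2/\alpha}$ guaranteeing $2w\le\rho$ on the $(x+y)$-block), and the same norm bookkeeping: after merging the input affine maps, the first-layer rows $\pm(w,w),\pm(w,0),\pm(0,w)$ have $1$-norm at most $2w\le 1$ and thus do not enter the norm product, so only the output weights $\pm d_k/2$ count, giving $6\cdot|d_k|/2=\tfrac32 k^{\alpha}/|a_2|$. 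All of this matches the paper's proof.

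The step that fails as written is the final assembly and the resulting depth count. You propose to keep the intermediate ReLU clippings of \cref{lemma:approximation_x2_regular} inside each of the three blocks (widening the one in the $(x+y)^2$ block to $[0,4]$) and then to append a final clip to $[-1,1]$, ``absorbing it into the last affine layer''. Absorption can only merge the clip's affine pre-map with the combination's output layer; the clip's ReLU stage is nonlinear and necessarily adds a third activation layer, so your construction lands in $\nN(6,3,\cdot)$ rather than the claimed $\nN(6,2,K_{k,\alpha})$. Omitting the final clip is not an option either: with clipped blocks the combination ranges over $[-1,2]$, and the error bound alone only confines it to $[-1-9Mk^{-\alpha}/|a_2|,\,1+9Mk^{-\alpha}/|a_2|]$, so the range requirement would fail. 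The fix is exactly the paper's construction: drop the intermediate clippings altogether --- your error analysis is in any case carried out on the unclipped blocks $\Phi$, and the clips play no role in it --- realize the three symmetric blocks as a single hidden $\sigma$-layer of width $6$ with output weights $\pm d_k/2$, and apply the single clip $\psi(z)=\relu(z+1)-\relu(z-1)-1\in\nN(2,1,1)$ afterwards; by the composition rule of \cref{proposition:properties_neural_netowrk} this yields depth $2$, width $6$, norm $\tfrac32 k^{\alpha}/|a_2|$, and preserves the error since $\psi$ is $1$-Lipschitz and equals the identity on $[-1,1]\ni xy$.
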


\begin{proof}
We set $w:=k^{-\alpha/2}$. For $|t|\le 2$, we have that $|wt|\le 2w\le \rho$ by the choice of $k$, hence
\[
\sigma(wt)+\sigma(-wt)=2a_2 (wt)^2 + \left(r(wt)+r(-wt)\right),\qquad \text{with} \quad |r(\pm wt)|\le M|wt|^4.
\]
We set $d_k := \tfrac{1}{2a_2 w^2}=\tfrac{k^\alpha}{2a_2}$ and we define the two-neuron block
\[
S_{k,\alpha} (t):=d_k \, \bigl( \sigma(wt)+\sigma(-wt)\bigr) = \frac{k^\alpha}{2a_2} \bigl(\sigma\bigl(k^{-\alpha/2}\,t\bigr) + \sigma\bigl(-k^{-\alpha/2}\,t\bigr)\bigr) ,
\]
and the bilinear approximation
\[ 
\Phi_{k,\alpha}(x,y) := \tfrac12\left(S_{k,\alpha}(x + y)-S_{k,\alpha}(x)-S_{k,\alpha}(y)\right).
\]
Then we have that the square error is
\[
E(t) := S_{k,\alpha}(t) - t^2 = \frac{r\bigl(k^{-\alpha/2}\,t\bigr) + r\bigl(-k^{-\alpha/2}\,t\bigr)}{2a_2 k^{-\alpha}} \implies |E(t)|\le \frac{M}{|a_2|}k^{-\alpha} \, t^4.
\]
For $t\in[-1,1]$, this yields $|E(t)|\le \tfrac{M}{|a_2|}k^{-\alpha}$, and for $t\in[-2,2]$, we have that $|E(t)|\le 16\,\tfrac{M}{|a_2|}k^{-\alpha}$. Since
\[
\Phi_{k,\alpha}(x,y)-xy=\Phi_{k,\alpha}(x,y)-2\left(\left(\frac{x+y}{2}\right)^2-\left(\frac{x}{2}\right)^2-\left(\frac{y}{2}\right)^2\right)=\tfrac12\Bigl(E(x + y) - E(x) - E(y)\Bigr),
\]
for $(x,y)\in[-1,1]^2$, we have the following error estimate:
\[
\bigl| \Phi_{k,\alpha}(x,y)-xy \bigr| \le \frac12\left(16\frac{M}{|a_2|}k^{-\alpha} +2 \frac{M}{|a_2|}k^{-\alpha} \right) = 9\frac{M}{|a_2|}\,k^{-\alpha}.
\]
We notice that $\Phi_{k,\alpha}$ is a six-neuron block with input weights
\[
\pm (w,\ w),\quad \pm (w,\ 0),\quad \pm (0,\ w), \qquad \text{and output weights} \quad \left(\tfrac{d_k}{2},\tfrac{d_k}{2},-\tfrac{d_k}{2},-\tfrac{d_k}{2},-\tfrac{d_k}{2},-\tfrac{d_k}{2}\right). 
\]
Thus $\Phi_{k,\alpha}\in\nN(6,1,K_{k,\alpha})$, with $K_{k,\alpha}=\max\{2|w|,1\}(6 \cdot |d_k|/2)$; since $2k^{-\frac{\alpha}{2}} \le \rho \le 1$, we have that 
\[
    K_{k,\alpha} = \tfrac32 \tfrac{k^\alpha}{|a_2|}.
\]
We now consider $\psi(z):=\mathrm{ReLU}(z+1)-\mathrm{ReLU}(z-1)-1\in\nN(2,1,1)$ and we set $\phi_{k,\alpha} := \psi\circ \Phi_{k,\alpha}\in\nN(6,2,K_{k,\alpha})$. Then, the conclusion follows from the $1$-Lipschitz property of $\psi$ and $\psi(z)=z$ on $[-1,1]$ as in~\cref{lemma:approximation_x2_regular}. 
\end{proof}

To generalize this result to arbitrary monomials of the form $f(x_1, \dots, x_d) = x_1x_2 \cdots x_d$, we use the neural network obtained in~\cref{lemma:approx_xy} repeatedly approximating non overlapping pairs of consecutive monomials $x_i  x_{i+1}$.

\begin{lemma}\label{lemma:approx_xy_d}
Let $\alpha>0$ and let $\sigma:\R\rightarrow\R$ be an activation function satisfying Assumption \ref{ass:sigma_exp}. For any $k\in\N$, $k\ge k_0:=\max\{1,(2/\rho)^{2/\alpha}\}$, there exists a neural network $\phi_{k,\alpha}^{(d)}\in\nN\left(W,L,K\right)$, with
\begin{equation*} \label{eq:lem4:params}
W := W_d = 6\Big\lceil\frac{d}{2}\Big\rceil,\qquad
L := L_d = 2\Big\lceil\log_2 d\Big\rceil,\qquad
K := K_{k,\alpha} = \left[\frac{3}{2}\frac{k^\alpha}{|a_2|}\right]^{\lceil\log_2 d\rceil},
\end{equation*}
such that  $\phi_{k,\alpha}^{(d)} \bigl( [-1,1]^d \bigr) \subseteq [-1,1]$ and $\phi_{k,\alpha}^{(d)}(x_1, \dots, x_d)$ approximates the function $f(x_1, \dots, x_d) = \prod_{i=1}^d x_i$ with following error estimate
\[
\max_{x\in[-1,1]^d}\Bigl|x_1 \cdot \dots \cdot x_d -\phi_{k,\alpha}^{(d)}(x_1, \dots, x_d)\Bigr| \le \left(2^{\lceil\log_2 d\rceil}-1\right)C_{\star}\,k^{-\alpha},
\]
where $C_\star:=9 M/|a_2|$ is the constant from~\cref{lemma:approx_xy}.
\end{lemma}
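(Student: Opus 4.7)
My plan is to build $\phi_{k,\alpha}^{(d)}$ as a balanced binary tree of pairwise products, each realized by the two-variable block $\phi_{k,\alpha}$ from~\cref{lemma:approx_xy}. The leaves carry the $d$ inputs $x_1,\dots,x_d\in[-1,1]$, and at every internal node two children are combined by a copy of $\phi_{k,\alpha}$. Since each bivariate block lies in $\nN(6,2,\cdot)$, every level of the tree contributes depth $2$, giving total depth $L_d=2\lceil\log_2 d\rceil$; because the first level already contains $\lceil d/2\rceil$ parallel copies of the bivariate block, the maximal layer width across the entire network is $W_d=6\lceil d/2\rceil$. Whenever the number of factors at a level is odd, I would simply pass one factor through by attaching a width-preserving identity block of matching depth and norm $1$, which neither changes the norm-constraint product nor breaks the range invariant.

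I would formalize the construction by induction on $\lceil\log_2 d\rceil$. At each level I first concatenate the required copies of $\phi_{k,\alpha}$ (plus a possible identity block) via~\cref{proposition:properties_neural_netowrk}(iii), which takes the maximum of the individual norm constraints, and then compose the resulting layer with the output of the previous level via~\cref{proposition:properties_neural_netowrk}(ii), which multiplies them. Since each bivariate block carries norm constraint $\tfrac32\tfrac{k^\alpha}{|a_2|}$, after $\lceil\log_2 d\rceil$ such compositions the total norm constraint is exactly $K_{k,\alpha}=\bigl[\tfrac32\tfrac{k^\alpha}{|a_2|}\bigr]^{\lceil\log_2 d\rceil}$, and the range inclusion $\phi_{k,\alpha}^{(d)}\bigl([-1,1]^d\bigr)\subseteq[-1,1]$ follows inductively because each bivariate block maps $[-1,1]^2$ into $[-1,1]$.

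The heart of the argument is the error propagation. If $\widetilde u,\widetilde v\in[-1,1]$ approximate $u,v\in[-1,1]$ with errors at most $E$, then
\[
\bigl|\phi_{k,\alpha}(\widetilde u,\widetilde v)-uv\bigr| \le \bigl|\phi_{k,\alpha}(\widetilde u,\widetilde v)-\widetilde u\widetilde v\bigr|+|\widetilde u|\,|\widetilde v-v|+|v|\,|\widetilde u-u| \le C_\star k^{-\alpha}+2E,
\]
using~\eqref{lem3:err_estim} and $|\widetilde u|,|v|\le 1$. Denoting by $E_\ell$ the worst-case error at level $\ell$ with $E_0=0$, this yields the linear recursion $E_\ell\le 2E_{\ell-1}+C_\star k^{-\alpha}$, whose solution is $E_\ell\le (2^\ell-1)C_\star k^{-\alpha}$. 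Specializing at the root level $\ell=\lceil\log_2 d\rceil$ produces the claimed bound.

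The main obstacle is the bookkeeping rather than the core estimate: I have to keep parallel subtree depths synchronized via identity padding when $d$ is not a power of $2$, and verify that every identity pass preserves both the norm-constraint invariant (trivially, since an identity block has norm $1$) and the range invariant at every level so that the error recursion above is actually applicable at every node. Once these details are pinned down, the recursion closes the argument in one line.
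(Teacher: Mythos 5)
Your proposal is correct and follows essentially the same route as the paper: a balanced binary tree of $\lceil\log_2 d\rceil$ levels of pairwise products built from the bivariate block of~\cref{lemma:approx_xy}, with concatenation/composition bookkeeping giving $W=6\lceil d/2\rceil$, $L=2\lceil\log_2 d\rceil$, $K=\bigl[\tfrac32\tfrac{k^\alpha}{|a_2|}\bigr]^{\lceil\log_2 d\rceil}$, and the same error recursion $E_\ell\le 2E_{\ell-1}+C_\star k^{-\alpha}$ (relying on the clipped range $[-1,1]$) solved to $(2^\ell-1)C_\star k^{-\alpha}$. The only cosmetic difference is that you pad odd factors with an explicit identity block, whereas the paper simply carries the unpaired coordinate through the concatenation, which amounts to the same thing.
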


\begin{proof}
Let the function $\phi^{(2)}_{k,\alpha}(x_1, x_2)$ be constructed as in~\cref{lemma:approx_xy}. We define $\phi_{k,\alpha}^{(d)}(x_1, \dots, x_d)$ as a result of the following $D:=\lceil\log_2 d\rceil$ steps recursive procedure. At zero step, we introduce a finite sequence of functions
\[
z^{(0)}_j:=x_j, \qquad j=1,\ldots, d.
\]
At each step $1 \leq \ell \leq D$, we multiply the elements of the sequence pair wisely: the first by the second one, the third by the fourth one, and so on. Thus, we construct the next level functions
\[
    y^{(\ell)}_j := \phi_{k,\alpha}^{(2)}\left(z^{(\ell-1)}_{2j-1},z^{(\ell-1)}_{2j}\right),\qquad j=1,\dots,\Big\lceil\tfrac{d}{2^\ell}\Big\rceil.
\]
If $\lceil d/2^\ell\rceil$ is odd, we leave the last component unchanged, i.e. $y^{(\ell)}_{\lceil d/2^\ell\rceil}:=z^{(\ell-1)}_{2\lceil d/2^\ell\rceil-1}$. In the end of each step we set $z^{(\ell)}:=y^{(\ell)}$, and we define the output of the neural network $\phi_{k,\alpha}^{(d)}(x):= z^{(D)}_1$.

The recursive procedure described above is a neural network. Indeed, for each step $\ell=1,\dots,D$ and $j=1,\dots,\lceil d/2^\ell\rceil$, we introduce the projection operators 
\[
\operatorname{Proj}_j^\ell \left(z^{(\ell-1)}_1, \ldots, z^{(\ell-1)}_{
\big\lceil \tfrac{d}{2^\ell} \big\rceil} \right) := \left( z^{(\ell-1)}_{2j-1} , z^{(\ell-1)}_{2j} \right)
\] 
that define the input of each $\phi^{(2)}_{k,\alpha}(\cdot, \cdot)$ from the previous step. By introducing $\phi_{k,\alpha,\ell,j}^{(2)}:=\phi_{k,\alpha}^{(2)}\circ\operatorname{Proj}_j^\ell$, we realize 
\[
    y^{(\ell)}_j =  \phi_{k,\alpha,\ell,j}^{(2)}\left(z^{(\ell-1)}\right),
\]
and thus the output of each layer $\ell$ is defined as
\[
    y^{(\ell)} := \phi_\ell\left(z^{(\ell-1)}\right) :=
    \begin{cases}
        \left( \phi_{k,\alpha,\ell,1}^{(2)}\left(z^{(\ell-1)}\right) , \dots , \phi_{k,\alpha,\ell,\lceil d/2^\ell\rceil}^{(2)}\left(z^{(\ell-1)}\right) \right), \quad &\text{if } \big\lceil\tfrac{d}{2^\ell}\big\rceil \text{ is even}, \\
        \left( \phi_{k,\alpha,\ell,1}^{(2)}\left(z^{(\ell-1)}\right) , \dots , \phi_{k,\alpha,\ell,\lceil d/2^\ell\rceil-1}^{(2)}\left(z^{(\ell-1)}\right), z^{(\ell-1)}_{2\lceil d/2^\ell\rceil-1} \right), \quad &\text{if } \big\lceil\tfrac{d}{2^\ell}\big\rceil \text{ is odd}.
    \end{cases}
\]
Therefore, the resulting function $\phi_{k,\alpha}^{(d)}$ is expressed as a composition $\phi_D \circ \dots \circ \phi_1$ of these functions. 

Since $\operatorname{Proj}_j^\ell \in \nN \left(2,0,1  \right)$, by the composition property~\cref{proposition:properties_neural_netowrk}({\romannumeral 2}) we conclude that, for each $j$,
$$
    \phi_{k,\alpha,\ell,j}^{(2)} =\phi_{k,\alpha}^{(2)}\circ\operatorname{Proj}_j^\ell\in \nN \left( 6, 2,\frac{3k^\alpha}{2|a_2|}\right),
$$ 
and, therefore, the concatenation property~\cref{proposition:properties_neural_netowrk}({\romannumeral 3}) implies that
$$
\phi_\ell\in \nN \left(6 \Big\lceil \frac{d}{2^\ell} \Big\rceil, 2,\frac{3k^\alpha}{2|a_2|}\right).
$$
Since the constructed neural network $\phi_{k,\alpha}^{(d)}$ is a composition of $D = \lceil\log_2 d\rceil$ neural networks, by using the composition property once again we conclude
$$
\phi_{k,\alpha}^{(d)} \in \nN \left(6 \Big\lceil \frac{d}{2} \Big\rceil, 2 \lceil\log_2 d\rceil,\left[\frac{3}{2}\frac{k^\alpha}{|a_2|}\right]^{\lceil\log_2 d\rceil} \right).
$$

The error estimation follows from~\eqref{lem3:err_estim}. We let $\eps_k:= C_{\star} \,k^{-\alpha}$ and we denote by $P^{(\ell)}_j$, $\ell=1,\dots,D$, the \emph{exact} product of the inputs to $y^{(\ell)}_j$, so $P^{(0)}_j=x_j$ and $P^{(\ell)}_j=P^{(\ell-1)}_{2j-1}P^{(\ell-1)}_{2j}$. We define the error at layer $\ell$ as
\[ 
\eps_{k,\ell}:=\max_j \bigl| y^{(\ell)}_j - P^{(\ell)}_j \bigr|.
\]
At layer 1, we have that $\eps_{k,1}\le \eps_k$. For $\ell\ge 2$, by using that $|P^{(\ell-1)}_{i}|\le 1$ and $|y^{(\ell-1)}_{i}|\le 1$, we have that
\begin{align*}
|y^{(\ell)}_j-P^{(\ell)}_j | &= \Big| \phi_k^{(2)}\left(y^{(\ell-1)}_{2j-1},y^{(\ell-1)}_{2j}\right) - P^{(\ell-1)}_{2j-1}P^{(\ell-1)}_{2j} \Big| 
\\ &\le \big|y^{(\ell-1)}_{2j-1}y^{(\ell-1)}_{2j} - P^{(\ell-1)}_{2j-1}P^{(\ell-1)}_{2j} \big| + \eps_k 
\\ &\le |P^{(\ell-1)}_{2j-1}||y^{(\ell-1)}_{2j} -P^{(\ell-1)}_{2j}| + |y^{(\ell-1)}_{2j}||y^{(\ell-1)}_{2j-1}-P^{(\ell-1)}_{2j-1}| + \eps_k
\\ &\le 2 \eps_{k,\ell-1} + \eps_k.
\end{align*}
Thus, we have the recursive relation $\eps_{k,\ell}\le 2\eps_{k,\ell-1}+\eps_k$, with $\eps_{k,1}=\eps_k$, that gives the exponential bound
\[
\eps_{k,\ell} \le (2^\ell-1)\eps_k.
\]
By taking the maximal one ($\ell=D=\lceil\log_2 d\rceil$), we conclude the proof
\[
\max_{ x\in[-1,1]^d}\bigl|x_1\cdots x_d - \phi_{k,\alpha}^{(d)}( x)\bigr|\le (2^{\lceil\log_2 d\rceil}-1)C_{\star}\,k^{-\alpha}.
\]
\end{proof}

\subsection{Approximation of a regular function}

We now turn to approximating regular functions in the $\mathrm{Lip}_r$ class (see~\cref{def:lipschiotz-regular}). The strategy proceeds in three steps: first, we construct local Taylor polynomials of the target function; then, we approximate these polynomials using norm constrained neural networks; finally, we combine the local approximations via a partition of unity to obtain a global approximation over the domain of interest.

\begin{lemma}\label{lemma:approximation_functions}
Let $d\in\N$ and $r=m+\beta$ with $m\in\N$ and $\beta\in(0,1]$. We assume that the activation function $\sigma$ satisfies Assumption \ref{ass:sigma_exp}. We fix $\alpha,\gamma>0$ and $k\in\N$, $k\ge k_0:=\max\{1,(2/\rho)^{2/\alpha}\}$. Then, for any $f\in\Lip_{r}$, there exists a neural network $\phi_{k,\alpha}\in\nN(W,L,K)$, with
\[
W := W_{d,\gamma,m} = C_W(d,m)\,k^{d\gamma},\quad
L := L_{d} = 2 \lceil \log_2 d \rceil,\quad
K := K_{\alpha,d,\gamma,m} = C_K(d,m) k^{\alpha\lceil\log_2 d\rceil+d\gamma},
\]
satisfying the following approximation error estimate
\[
    \max_{x\in[0,1]^d} \bigl| f( x)-\phi_{k,\alpha}( x)\bigr| \le C \, \left(k^{-\gamma(m+\beta)}+k^{-\alpha}\right).
\]
\end{lemma}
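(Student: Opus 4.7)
The plan is a three-stage approximation in the spirit of Yarotsky's construction and of~\cite{JIAO2023249}: first cover $[0,1]^d$ with a grid-based partition of unity at scale $1/N$ with $N:=\lceil k^\gamma\rceil$; second, replace $f$ locally by its degree-$m$ Taylor polynomial; and third, realize the resulting piecewise-polynomial global approximation by a norm-constrained neural network that leverages the monomial approximation of~\cref{lemma:approx_xy_d}.

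Concretely, I would set $G:=\{0,1/N,\dots,1\}^d$ (of cardinality $\sim k^{d\gamma}$) and, for each $\boldsymbol\xi\in G$, define the product bump $\psi_{\boldsymbol\xi}(x):=\prod_{j=1}^d\psi_1(3N(x_j-\xi_j))$ from a fixed one-dimensional trapezoidal bump $\psi_1$ (realizable as a linear combination of shifted $\mathrm{ReLU}$ units, themselves obtainable from $\sigma$ via the clipping trick used in~\cref{lemma:approximation_x2_regular}), chosen so that $\{\psi_{\boldsymbol\xi}\}_{\boldsymbol\xi\in G}$ is a partition of unity on $[0,1]^d$ with $\mathrm{supp}(\psi_{\boldsymbol\xi})\subseteq\{\|x-\boldsymbol\xi\|_\infty\le 1/N\}$. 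To each $\boldsymbol\xi$ I attach the degree-$m$ Taylor polynomial $P_{\boldsymbol\xi}(x):=\sum_{\|\mathbf s\|_1\le m}\frac{\partial^{\mathbf s}f(\boldsymbol\xi)}{\mathbf s!}(x-\boldsymbol\xi)^{\mathbf s}$. Since $f\in\Lip_r$, Taylor's theorem with Hölder remainder yields $|f(x)-P_{\boldsymbol\xi}(x)|\le C(d,m)\|x-\boldsymbol\xi\|_\infty^{m+\beta}$ on $\mathrm{supp}(\psi_{\boldsymbol\xi})$, so the glued function $\tilde f:=\sum_{\boldsymbol\xi\in G}\psi_{\boldsymbol\xi}P_{\boldsymbol\xi}$ satisfies $\|f-\tilde f\|_{\infty,[0,1]^d}\le C(d,m)k^{-\gamma(m+\beta)}$ by the partition-of-unity property. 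To implement $\tilde f$ as a network I factor each summand as $\psi_{\boldsymbol\xi}(x)(x-\boldsymbol\xi)^{\mathbf s}=\prod_{j=1}^d g_{\xi_j,s_j}(x_j)$, with $g_{\xi_j,s_j}(x_j):=\psi_1(3N(x_j-\xi_j))(x_j-\xi_j)^{s_j}$, prepare the $d$ univariate (compactly supported) factors with a shallow sub-network, and feed them into the product network of~\cref{lemma:approx_xy_d}. Running the $\sim|G|\binom{d+m}{m}$ product subnetworks in parallel and combining them linearly with coefficients $\partial^{\mathbf s}f(\boldsymbol\xi)/\mathbf s!$ (bounded by $1$ since $f\in\Lip_r$), \cref{proposition:properties_neural_netowrk}(iii)-(iv) then deliver the claimed $W\sim k^{d\gamma}$, $L=2\lceil\log_2 d\rceil$, $K\sim k^{\alpha\lceil\log_2 d\rceil+d\gamma}$ (the factor $k^{d\gamma}$ in $K$ arising from the $|G|$-fold linear combination).

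The main obstacle will be the error bookkeeping: naively summing the $\sim k^{d\gamma}$ uniform product-approximation errors would produce a total error of order $k^{d\gamma-\alpha}$ and destroy the announced rate. The decisive structural observation is that the product network of~\cref{lemma:approx_xy_d} vanishes \emph{exactly} whenever one of its $d$ inputs is zero. Indeed, $\sigma(0)=0$ by~\cref{ass:sigma_exp}, so each depth-$2$ pairwise block $\Phi^{(2)}$ of~\cref{lemma:approx_xy} satisfies $\Phi^{(2)}(0,y)=\Phi^{(2)}(x,0)=0$; the clipping $\psi(z)=\mathrm{ReLU}(z+1)-\mathrm{ReLU}(z-1)-1$ satisfies $\psi(0)=0$; and the zero propagates through the recursive binary product tree. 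Since the prepared factor $g_{\xi_j,s_j}(x_j)$ vanishes exactly outside the support of $\psi_1(3N(\cdot-\xi_j))$, at every $x\in[0,1]^d$ only the $\mathcal O(2^d)$ grid points $\boldsymbol\xi$ with $x\in\mathrm{supp}(\psi_{\boldsymbol\xi})$ actually contribute to the evaluation of the neural network, each with individual error $\lesssim k^{-\alpha}$. Combining this localized contribution with the Taylor remainder yields $\|f-\phi_{k,\alpha}\|_{\infty,[0,1]^d}\le C(d,m)(k^{-\gamma(m+\beta)}+k^{-\alpha})$, as claimed.
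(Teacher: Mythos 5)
Your overall skeleton (grid of scale $k^{-\gamma}$, local Taylor polynomials of degree $m$, monomial approximation via \cref{lemma:approx_xy_d}, linear combination with coefficients $\partial^{\mathbf s}f/\mathbf s!$, and the final balance $k^{-\gamma(m+\beta)}+k^{-\alpha}$) is the same as the paper's. Where you genuinely diverge is the gluing step: the paper forms $\phi_{k,\alpha}=\sum_j\rho_j\,\phi_{k,j}$ with a \emph{normalized} partition of unity and bounds the error by the convex combination $\sum_j\rho_j(x)\,|f(x)-\phi_{k,j}(x)|$, so no accumulation over the $\sim k^{d\gamma}$ cells ever arises; you instead fold the bumps into the product networks and try to kill the off-support terms by an exact-vanishing argument. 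Your structural observation there is correct and nice: under \cref{ass:sigma_exp} one has $\sigma(0)=0$, the bilinear block of \cref{lemma:approx_xy} satisfies $\Phi^{(2)}(x,0)=\Phi^{(2)}(0,y)=0$, the clip $\psi$ fixes $0$, and the zero propagates exactly through the binary tree of \cref{lemma:approx_xy_d}.

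However, as written the argument has a genuine gap precisely at the point it was designed to fix. The exact-vanishing mechanism requires the inputs fed to the product tree to be \emph{exactly} zero off the support of the bump, i.e.\ it requires the prepared univariate factors $g_{\xi_j,s_j}(x_j)=\psi_1(3N(x_j-\xi_j))\,(x_j-\xi_j)^{s_j}$ to be computed exactly by the "shallow sub-network". This is impossible for $s_j\ge 1$: on the ramps of the trapezoid $g_{\xi_j,s_j}$ is already piecewise quadratic (and of degree $s_j+1$ in general), and the entire point of \cref{lemma:approximation_x2_regular}--\cref{lemma:approx_xy_d} is that such functions can only be \emph{approximated} by the admissible networks (only the pure bump, $s_j=0$, is piecewise linear and exactly realizable with the ReLU neurons the framework allows). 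Once the factors are merely approximate, they are only $O(k^{-\alpha})$-close to zero off-support, the zero no longer propagates, and summing over the $\sim k^{d\gamma}\binom{m+d}{d}$ parallel branches reintroduces exactly the $k^{d\gamma-\alpha}$ blow-up you identified. The natural repairs change your bookkeeping: either feed the exactly computable ReLU bumps and the raw shifted coordinates as \emph{separate} inputs into a larger product tree (then vanishing is exact through the bump inputs, but the tree has up to $d+m$ leaves, so the depth becomes $2\lceil\log_2(d+m)\rceil$ rather than the claimed $2\lceil\log_2 d\rceil$, and width/norm constants must be redone), or revert to the paper's device of weighting the \emph{local} networks $\phi_{k,j}$ by a nonnegative partition of unity summing to one, so the total error is a convex combination of local errors and no localization trick is needed. (To be fair, the paper itself is terse about how the multiplication by the normalized $\rho_j$ is realized within the stated architecture via \cref{proposition:properties_neural_netowrk}, so your concern is legitimate; but your particular fix does not close as stated.)
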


\begin{proof}
We partition the unit cube $[0,1]^d$ into a collection of smaller cubes, each with the side length $h = k^{-\gamma}$. We denote this collection by $\{Q_j\}_{j=1}^{N_{\gamma}}$, where the number of cubes $N_{\gamma}$ is equal to $N_{\gamma} = h^{-d} = k^{\gamma d}.$

The multi-variable Taylor expansion of a function $f(x)$ at point $x_j$ center of cube $Q_j$ is
\begin{equation*} \label{eq:Taylor_exp}
     f(x) = \sum_{|\mathbf{s}| \leq m} \frac{\partial^{\mathbf{s}} f(x_j)}{\mathbf{s}!}(x-x_j)^{\mathbf{s}} + R_m(x,x_j),
\end{equation*}
where the Lagrange remainder term is bounded by 
\begin{equation} \label{eq:Rm_bound}
    |R_m(x,x_j)|  \le \frac{1}{m!}\sum_{|\mathbf{s}|=m} \frac{m!}{\mathbf{s}!} \sup_{\xi \in [x,x_j]} \bigl|\partial^{\mathbf{s}} f(\xi) - \partial^{\mathbf{s}} f(x_j)\bigr| \left( \bigl\| x-x_j \bigr\|_\infty\right)^m \leq \sum_{|\mathbf{s}|=m} \frac{1}{2^{m+\beta}\mathbf{s}!}h^{m+\beta} =:  C_1(d,m)k^{-\gamma(m+\beta)},
\end{equation}
due to the H\"older-continuity assumption on the derivatives of $f$ and the fact that the center of each cube satisfies $\|x-x_j\|_\infty \le h/2$ by the construction. 

We now introduce a partition of unity $\{\rho_j\}_{j=1}^{N_\gamma}$ subordinate to a cubes partition $\{Q_j\}_{j=1}^{N_\gamma}$ by using neural networks.
\begin{enumerate}[label=(\roman*)]
    \item We define the \emph{local bump functions} by setting
    \begin{equation} \label{def:nu}
        \eta_j(x) := \prod_{i=1}^d \text{ReLU}\bigl(1 - k^\gamma|x_i - (x_j)_i|\bigr),
   \end{equation} 
        where $x_i$ and $(x_j)_i$ denote the $i$-th component of $x$ and $x_j$, respectively.
        \item We normalize each function as follows
        \begin{equation} \label{def:rhoj}
            \rho_j(x) := \frac{\eta_j(x)}{\sum_{\ell=1}^{N_\gamma} \eta_\ell(x)}.
         \end{equation}
    \end{enumerate}
    It is easy to verify that a set of functions $\{\rho_j\}_{j=1}^{N_\gamma}$ is a partition of unity, specifically, that $\rho_j \ge 0$, their support satisfies $\text{supp}(\rho_j) \subset Q_j$ and the sum is one at every point of the domain:
    \[
        \sum_{j=1}^{N_\gamma} \rho_j(x) = 1 \qquad \text{for all } x \in [0,1]^d.
    \]
    
    For each multi-index $\mathbf{s}$ with $|\mathbf{s}| \le m$, we construct an approximating neural network, denoted by $\phi_{k,\mathbf{s},j}(x)$, by using~\cref{lemma:approx_xy_d}. Furthermore, the corresponding error is bounded by
    \[
        \max_{x \in Q_j} \bigl|(x-x_j)^{\mathbf{s}} - \phi_{k,\mathbf{s},j}(x)\bigr| \leq C_2(d,\mathbf{s}) k^{-\alpha},
    \]
    where $C_2(d,\mathbf{s})$ is the error constant from \cref{lemma:approx_xy_d}. On each cube $Q_j$, we construct the neural network
    \[
        \phi_{k,j}(x) = \sum_{|\mathbf{s}| \leq m} \frac{\partial^{\mathbf{s}} f(x_j)}{\mathbf{s}!}\phi_{k,\mathbf{s},j}(x).
    \]
    Since $\phi_{k,\mathbf{s},j}\in\nN\left(6\left\lceil\frac{d}{2}\right\rceil,2\lceil\log_2 d\rceil,\left[\frac{3}{2}\frac{k^\alpha}{|a_2|}\right]^{\lceil\log_2 d\rceil}\right)$, the width of $\phi_{k,j}$ is given by $6\left\lceil\frac{d}{2}\right\rceil$ times the number of monomials of $d$ variables with $|\mathbf s| \le m$, that is
    \begin{equation}\label{eq:partial_width}
        6\left\lceil\frac{d}{2}\right\rceil \cdot \sum_{n=0}^m \binom{n+d-1}{d-1} = 6\left\lceil\frac{d}{2}\right\rceil \cdot \binom{m+d}{d}
    \end{equation}
    from the \emph{hockey-stick} (or Pascal) identity, its depth is $2\lceil\log_2 d\rceil$, and the constraint on the weight norm is
    \[
         \sum_{|\mathbf{s}| \leq m} \left| \frac{\partial^{\mathbf{s}} f(x_j)}{\mathbf{s}!} \right| \left[\frac{3}{2}\frac{k^\alpha}{|a_2|}\right]^{\lceil\log_2 d\rceil} \leq C_0\binom{m+d}{d} k^{\alpha\lceil\log_2 d\rceil}, \quad \text{with} \quad C_0 = \left[\frac{3}{2|a_2|}\right]^{\lceil\log_2 d\rceil}.
    \]
    Moreover, the error in $Q_j$ can be bounded by using the Taylor remainder and the monomial approximation
    \[
        \bigl|f(x) - \phi_{k,j}(x)\bigr| \le \bigl|R_m(x,x_j)\bigr| + \sum_{|\mathbf{s}| \leq m} \frac{|\partial^{\mathbf{s}} f(x_j)|}{\mathbf{s}!}\bigl|(x-x_j)^{\mathbf{s}} - \phi_{k,\mathbf{s},j}(x)\bigr| \leq C_1(d,m)k^{-\gamma(m+\beta)} + C_3(d,m)k^{-\alpha},
    \]
    where $C_3(d,m)=\sum_{|\mathbf{s}| \leq m}\frac{C_2(d,\mathbf{s})}{\mathbf{s}!}$.
    
    We now define the global network by gluing together all $\phi_{k,j}$ with the partition of unity. In other words, we define
    \[
        \phi_{k,\alpha}(x) := \sum_{j=1}^{N_\gamma} \rho_j(x)\phi_{k,j}(x).
    \]
    The hyper-parameters of this network are given by applying~\cref{proposition:properties_neural_netowrk}. Specifically, for the width we multiply~\eqref{eq:partial_width} by the number of cubes, obtaining
    \[
        W_{d,\gamma,m} = 6\left\lceil\frac{d}{2}\right\rceil \cdot \binom{m+d}{d} \cdot N_\gamma =: C_W(d,m) k^{d\gamma},
    \]
    while the constraint on the weight norm is
    \[
        K_{\alpha,d,\gamma,m} = C_0\binom{m+d}{d} k^{\alpha\lceil\log_2 d\rceil} N_\gamma =: C_K(d,m) k^{\alpha\lceil\log_2 d\rceil+d\gamma}.
    \]
    The depth does not change ($L_d = 2 \lceil \log_2 d \rceil$) because each $\phi_{k,\mathbf{s},j}$ has the same depth (by~\cref{lemma:approx_xy_d}). The total approximation error, on the other hand, satisfies
    \[
        \bigl|f(x) - \phi_{k,\alpha}(x)\bigr| \leq \sum_{j=1}^N \rho_j(x)\, \bigl|f(x) - \phi_{k,j}(x)\bigr| \leq \sum_{j=1}^N \rho_j(x)\bigl(C_1k^{-\gamma(m+\beta)} + C_3k^{-\alpha}\bigr) \leq C(k^{-\gamma(m+\beta)} + k^{-\alpha}),
    \]
    as a consequence of the fact that $\sum_{j=1}^{N_\gamma} \rho_j(x)= 1$ for all $x \in [0,1]^d$, concluding the proof.
\end{proof}

\begin{remark}\label{remark:balancing_powers}
The proof reveals that the optimal choice of $\gamma > 0$ arises from balancing the approximation errors, yielding the asymptotic relation
\[
    k^{-\gamma r} \sim k^{-\alpha} \qquad \text{as } k \to \infty.
\]
By setting $\gamma = \alpha/r$, where $r = m + \beta$, we obtain the approximation error estimate
\[
    \max_{x\in[0,1]^d}|f(x)-\phi_{k,\alpha}(x)| \le 2Ck^{-\alpha},
\]
with hyperparameters given by
\[
W = C_W(d,m)k^{ d\alpha/r},\qquad
L = 2\lceil\log_2 d\rceil,\qquad
K = C_K(d,w) k^{\alpha(\lceil\log_2 d\rceil+d/r)}.
\]
For sufficiently smooth functions, i.e. $m \gg 1$, the optimal parameter satisfies $\gamma < 1$. This implies that the neural network $\phi_{k,\alpha}$ is narrow, demonstrating how smoothness properties of the target function directly influence the neural network architecture.
\end{remark}

We are now prepared to demonstrate the upper bound in the main result, as stated in the first point of~\cref{thm:main}.

\begin{proof}[Proof of~\cref{thm:main}({\romannumeral 1})]
We apply~\cref{lemma:approximation_functions} with the choice $\gamma=\alpha/r$ to obtain, for any function $f\in\Lip_r$, that there exists a neural network $\phi_{k,\alpha}\in\nN(W,L,K)$ such that
\[
    \|f-\phi_{k,\alpha}\|_{\infty,[0,1]^d} \le C\left(k^{-\alpha}+k^{-\alpha}\right) \le 2C \,k^{-\alpha},
\]
with $W \ge C_W k^{d\gamma} = C_W k^{d\alpha/r}$, $L\ge2\lceil\log_2 d\rceil$ and $K \ge C_K k^{\alpha(\lceil\log_2 d\rceil+d/r)}\ge C_K k^\alpha$. It follows from such architecture parameters that
\[
    W \ge 6\left\lceil\frac{d}{2}\right\rceil \binom{m+d}{d} k_0^{d\alpha/r} =: c_1, \qquad K \ge \frac{1}{6\lceil d/2 \rceil} \left[ \frac{3k_0^\alpha}{2|a_2|} \right]^{\lceil\log_2d\rceil} W =: c_2W. 
\]
If we choose any $k\in\N$, $k\ge\max\{1,(2/\rho)^{2/\alpha}\}$, satisfying
\[
    k \ge \max\left\{\left(\frac{W}{C_W}\right)^{r/(d\alpha)}, \left(\frac{K}{C_K}\right)^{\,1/\alpha}\right\},
\]
then
\[
    \|f-\phi_{k,\alpha}\|_{\infty,[0,1]^d} \le 2Ck^{-\alpha} \le C'\min\Big\{W^{-r/d}, K^{-1}\Big\} \le C_1W^{-r/d}+C_2K^{-1},
\]
uniformly over $\phi\in\nN(W,L,K)$ and $f\in\Lip_r$. This proves~\eqref{eq:main_upper_smooth}.
\end{proof}

\begin{remark}
All results in this section extend to weaker regularity assumptions on $\sigma$, see~\cref{subsec:lower_regularity}. For clarity of exposition, however, we have worked under Assumption \ref{ass:sigma_exp}.
\end{remark}

\section{Lower bound} \label{sec:lower_bound}

In this section, we establish the lower bound in~\cref{thm:main}({\romannumeral 2}) by analyzing the packing number of neural network evaluations. In particular, our approach estimates the packing number of evaluation sets from $\Phi:=\nN(W,L,K)$ on a finite grid, by utilizing Rademacher complexity techniques following the framework of~\cite{maiorov-ratsaby,JIAO2023249} combined with our estimates from~\cref{lemma:rad_compl}.

\smallskip

We fix a smooth bump $\psi\in C^\infty(\R^d;[0,1])$ such that
\[
    \psi(0)=1,\qquad \psi(x)=0\quad\text{if }\|x\|_\infty\ge \tfrac14.
\]
We select a scaling constant $C_{\psi,r}$ such that $C_{\psi,r}\,\psi$ belongs to $\Lip_r(\R^d)$ with unit $\Lip_r$-norm. This scaling ensures that all derivatives up to order $m$ and the $\beta$-H\"older modulus of order $m$ are properly normalized (cf.~\cref{def:lipschiotz-regular}). For each $N\in\N$, we construct a $d$-dimensional grid
\[
\Delta_N:=\Bigl\{\frac{n}{N}\ :\ n\in\{0,1,\dots,N-1\}^d\Bigr\}\subset[0,1]^d,
\]
containing $m:=|\Delta_N|=N^d$ points. Associated with each grid point, we define the scaled and translated bump functions as follows:
\[
\psi_{n}(x):= \frac{C_{\psi,r}}{N^r} \, \psi(Nx-n).
\]
These functions, which have disjoint supports, possess two crucial properties:
\begin{itemize}
    \item $\psi_n\in\Lip_r$,
    \item $\psi_n\left(\frac{n}{N}\right)=C_{\psi,r}N^{-r}$ and $\psi_n\left(\frac{n'}{N}\right)=0$ for any $n'\neq n$.
\end{itemize}
For each binary sequence $a=(a_n)_{n}\in\{\pm1\}^{\{0,\dots,N-1\}^d}$
, we define the function
\[
h_a(x):=\sum_{n\in\{0,\dots,N-1\}^d} a_n\,\psi_n(x)\in \Lip_r.
\]
When evaluated at the points of the grid $\Delta_N$, this function takes the value $C_{\psi,r}N^{-r}$ multiplied by the term $a_n$ indexed as the corresponding point of the grid. In other words, we have
\[
    h_a(\Delta_N)=\left\{C_{\psi,r}N^{-r} a_n \ : \ n\in\{0,1,\dots,N-1\}^d \right\}.
\]
To construct a subset of $\Lip_r$-function evaluations on the grid whose elements are separated by at least a positive threshold, we employ a standard coding argument recalled here for convenience; see~\cite[Lemma~3.8]{JIAO2023249}. If $m\ge8$, there exists a subset $\mathcal B_N\subset\{\pm1\}^{\{0,\dots,N-1\}^d}$ with $|\mathcal B_N|\ge 2^{m/4}$ such that any two distinct sequences $a,a'\in\mathcal B_N$ differ in at least $\lfloor m/8\rfloor$ coordinates, where $m=N^d$. We quantify separation using the empirical metric
\[
    \rho_2(x,y):=m^{-1/2}\,\|x-y\|_2\qquad (x,y\in\R^m).
\]
For distinct $a, a'\in\mathcal B_N$, the distance between the evaluations of $h_a$ and $h_{a'}$ on the grid satisfies
\[
    \rho_2\left(h_a(\Delta_N),h_{a'}(\Delta_N)\right) = C_{\psi,r} \, N^{-r} \frac{2}{\sqrt{m}}\sqrt{\#\{n \ : \ a_n\neq a'_n\}} \ge \frac{1}{2} C_{\psi,r} N^{-r}.
\]
In other words, $\{ h_a(\Delta_N) \ : \ a\in\mathcal{B}_N \}$ is a $\frac{1}{2}C_{\psi,r}N^{-r}$-packing of $\{ h_a(\Delta_N) \ : \ a\in\{\pm1\}^{\{0,\dots,N-1\}^d} \}$, and hence
\[ 
\log \cN_p\left(\{h_a(\Delta_N)\ :\ a\in\mathcal B_N\}, \rho_2, \tfrac{C_{\psi,r}}{2}N^{-r}\right) \ge \log |\mathcal{B}_N| \ge \frac{m}{4}\log 2 = \frac{N^d}{4}\log 2.
\]
Next, we bound the packing number of the set of neural network evaluations on the grid. We define
\[
S_\Phi(\Delta_N):=\{(\phi(x))_{x\in\Delta_N}:\phi\in\Phi\}\subset\R^m
\]
as the set of all evaluation vectors obtained by evaluating networks in $\Phi$ on the grid points. A Sudakov-type entropy bound (see~\cite[Corollary 4.14]{ledoux1991probability} or~\cite[Theorem 13.4]{boucheron-lugosi-massart}) relates the packing number to the Rademacher complexity (\cref{def:rad}): for some universal $C>0$ and all $\eps > 0$,
\begin{equation}\label{eq:sudakov}
\log \cN_p\left(S_\Phi(\Delta_N),\rho_2,\eps\right) \le \frac{Cm\,\mathcal R_m\left(S_\Phi(\Delta_N)\right)^2}{\eps^2} \log\left(2+\frac{1}{\sqrt{m} \mathcal R_m(S_\Phi(\Delta_N))}\right).
\end{equation}
We now estimate the Rademacher complexity from both sides. The upper bound from~\cref{lemma:rad_compl}({\romannumeral 1}) with $B=1$ gives
\begin{equation}\label{eq:R-upper}
\mathcal R_m\left(S_\Phi(\Delta_N)\right) \le \frac{K}{\sqrt m} \sqrt{2\left(L+1+\log(d)\right)}.
\end{equation}
while the lower bound from~\cref{lemma:rad_compl}({\romannumeral 3}), valid for 
sufficiently large $m$, yields
\begin{equation}\label{eq:R-lower}
\mathcal R_m\left(S_\Phi(\Delta_N)\right) \ge \frac{\sigma'(0)}{8M}\,K\frac{s^2}{m}.
\end{equation}
By substituting the upper bound~\eqref{eq:R-upper} into the main term of~\eqref{eq:sudakov}, we obtain
\begin{equation}\label{eq:pack-upper}
\log \cN_p\left(S_\Phi(\Delta_N),\rho_2,\eps\right) \le \frac{C' K^2 L}{\eps^2}\log\left(2+\frac{1}{\sqrt{m} \, \mathcal R_m(S_\Phi(\Delta_N))}\right),
\end{equation}
where $C'>0$ is a constant satisfying $2C(L+1+\log(d))\le C'L$. To control the logarithmic factor, we apply the lower bound~\eqref{eq:R-lower}, which gives an upper bound on the argument of the logarithm. For sufficiently large $m$,
\begin{equation}\label{eq:log-factor}
\log\left(2+\frac{1}{\sqrt{m}\,\mathcal R_m(S_\Phi(\Delta_N))}\right) \le \log \left(2+\frac{8M}{\sigma'(0)}\frac{1}{K}\frac{\sqrt{m}}{s^2}\right) \le \log \left(2+\frac{8M}{\sigma'(0)}\frac{\sqrt{m}}{s^2}\right).
\end{equation}

\begin{proof}[Proof of \cref{thm:main}({\romannumeral 2})]
We proceed by contradiction. We suppose that for some $\eps>0$,
\[
\mathcal E(\Lip_r,\Phi)< \eps.
\]
We set $\eps = \frac{C_{\psi,r}}{8}N^{-r}$, where $N\in\N$ will be chosen later. Then for each $a\in\mathcal B_N$, there exists $\phi_a\in\Phi$ satisfying
\[
    \|\phi_a-h_a\|_\infty = \sup_{x\in[0,1]^d}|\phi_a(x)-h_a(x)| \le \eps = \frac{C_{\psi,r}}{8}\,N^{-r}.
\]
Evaluating at the grid points of $\Delta_N$ and applying the triangle inequality, we obtain for all distinct $a,a'\in\mathcal B_N$:
\begin{align*}
\rho_2\left(\phi_a(\Delta_N),\phi_{a'}(\Delta_N)\right)&\ge\rho_2\left(h_a(\Delta_N),h_{a'}(\Delta_N)\right) - \rho_2\left(\phi_a(\Delta_N)-h_a(\Delta_N),0\right)-\rho_2\left(\phi_{a'}(\Delta_N)-h_{a'}(\Delta_N),0\right) \\
&\ge \frac{C_{\psi,r}}{2}N^{-r} - \|\phi_a-h_a\|_\infty - \|\phi_{a'}-h_{a'}\|_\infty \\
&\ge \frac{C_{\psi,r}}{2}N^{-r} - \frac{C_{\psi,r}}{8}N^{-r} - \frac{C_{\psi,r}}{8}N^{-r} = \frac{C_{\psi,r}}{4}N^{-r}.
\end{align*}
Thus the set $\{ \phi_a(\Lambda_N) \ : \ a\in\mathcal{B}_N \}$ forms a $\frac{1}{4}C_{\psi,r}N^{-r}$-packing of $S_\Phi(\Delta_N)$, whence
\[
\log \cN_p\left(S_\Phi(\Delta_N),\rho_2,\frac{C_{\psi,r}}{4}N^{-r}\right) \ge \log \cN_p\left(\{ \phi_a(\Lambda_N) \ : \ a\in\mathcal{B}_N \},\rho_2,\frac{C_{\psi,r}}{4}N^{-r}\right) \ge \log |\mathcal B_N| \ge \frac{N^d}{4}\,\log 2.
\]
On the other hand, applying~\eqref{eq:pack-upper} with $\eps=\frac{C_{\psi,r}}{4}N^{-r}$ and substituting the bound~\eqref{eq:log-factor} yields
\[
\frac{N^d}{4} \log 2 \le \frac{16 C' K^2 L}{C_{\psi,r}^2N^{-2r}}\log\left( 2 + \frac{8M}{\sigma'(0)}\frac{\sqrt{m}}{s^2} \right) = \frac{C''_\sigma K^2L}{N^{-2r}},
\]
where the constant $C''_\sigma>0$ depends on the activation function $\sigma$. Rearranging gives
\[
N^{d-2r} \le 4C''_\sigma K^2L.
\]
Since $d>2r$, choosing $N=\max\left\{8^\frac{1}{d},\left(5C''_\sigma K^2L\right)^{1/(d-2r)}\right\}$ produces a contradiction. We conclude that
\[
    \mathcal E(\Lip_r,\Phi) \ge \frac{C_{\psi,r}}{8}N^{-r} \gtrsim \left(K^2L\right)^{-\frac{r}{d-2r}},
\]
as claimed.
\end{proof}

\begin{remark}
    The lower bound obtained here matches that of~\cite[Theorem 3.2]{JIAO2023249}. The distinction is that our constant $C''_\sigma$ depends explicitly on the choice of activation function.
\end{remark}

\section{Upper bound in the non-deterministic case} \label{sec:upper_bound_nd}

In this section, we construct neural networks with randomly sampled weights to approximate functions. We establish probabilistic upper bounds on the approximation error, revealing that a trade-off between error bound and norm constraints on the network weights persists.

\begin{lemma}\label{lemma:approx_random_1}
Let $\alpha>0$ and let $\sigma:\R \rightarrow\R$ be an activation function satisfying Assumption \ref{ass:sigma_exp}. For any $k\in\N$ with $k\ge k_0:=\max\{1,\rho^{-2/\alpha}\}$, there exists a random neural network $\phi_{k, \alpha}\in\nN(2k, 2, K_{k,\alpha})$ with a norm constraint
\begin{equation} \label{lem6:norm_const}
    K_{k,\alpha} = 2\frac{k^{\alpha}}{|a_2|},
\end{equation}
such that $\phi_{k, \alpha} ([0,1])\subseteq [0,1]$ and $\phi_{k, \alpha}(x)$ approximates $f(x) = x^2$ with the following probabilistic guarantee: for all $\eps > \eps_0 := \frac{2 M k^{-\alpha}}{|a_2|}$,
\begin{equation}\label{eq:x2aub}
\mathbb P \left(\, \bigl|\phi_{k, \alpha}(x)-x^2\bigr| \le \eps \right)  \geq 1-2 \exp\left(\frac{-k (\eps-\eps_0)^2}{ C }\right), \qquad C := 2 \left(\frac{1}{3} + \frac{(2M)^2 k^{-2\alpha}}{a_2^2}\right) + \tfrac{2}{3} \left(  1+ \frac{6M\,k^{-\alpha}}{|a_2|} \right)^2.
\end{equation}
\end{lemma}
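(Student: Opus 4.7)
My plan is to randomize the two-neuron quadratic block from~\cref{lemma:approximation_x2_regular} by sampling its input weights, take the empirical average of $k$ independent copies to exploit concentration, and finish with the same ReLU-based clipping used in the deterministic case. Concretely, I sample weights $w_1,\dots,w_k$ i.i.d.\ from a symmetric distribution supported in an interval $[-c_0 k^{-\alpha/2},c_0 k^{-\alpha/2}]$ (so that $|w_i x|\le\rho$ for every $x\in[0,1]$) and with second moment $\E[w_i^2]$ chosen so that $d_k:=1/(2a_2\E[w_i^2])$ satisfies $|d_k|\le k^\alpha/|a_2|$. I then define the unclipped, width-$2k$, depth-$1$ random network
\[
    \Phi_{k,\alpha}(x)\ :=\ \frac{d_k}{k}\sum_{i=1}^k\bigl(\sigma(w_i x)+\sigma(-w_i x)\bigr)\ =\ \frac{1}{k}\sum_{i=1}^k Z_i(x).
\]
Its input weight matrix has norm $\max_i|w_i|\le c_0 k^{-\alpha/2}\le 1$, and its output weight matrix has row-sum $2|d_k|\le 2k^\alpha/|a_2|$, matching the required $K_{k,\alpha}$ in~\eqref{lem6:norm_const}.

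By Assumption~\ref{ass:sigma_exp}, the odd-power terms cancel pairwise and
\[
    Z_i(x)=2a_2 d_k w_i^2 x^2+d_k\bigl(r(w_ix)+r(-w_ix)\bigr).
\]
Taking expectations and using $2a_2 d_k\,\E[w_i^2]=1$ collapses the leading term to $x^2$, while the remainder bound $|r(t)|\le M|t|^4$ controls the bias, giving $|\E[Z_i(x)]-x^2|\le\eps_0=2Mk^{-\alpha}/|a_2|$. The same expansion, together with $x\in[0,1]$ and the support of $w_i$, yields an almost-sure bound $|Z_i|\le B:=1+6Mk^{-\alpha}/|a_2|$ and a variance estimate $\mathrm{Var}(Z_i)\le V:=1/3+4M^2 k^{-2\alpha}/a_2^2$, in which the $1/3$ appears as the normalized variance of the quadratic coefficient $2a_2 d_k w_i^2$ under the chosen distribution.

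Since the $Z_i(x)$ are i.i.d., Bernstein's inequality yields
\[
    \mathbb{P}\Bigl(\bigl|\tfrac{1}{k}\textstyle\sum_i Z_i(x)-\E[Z_1(x)]\bigr|>t\Bigr)\ \le\ 2\exp\!\bigl(-k t^2/(2V+2Bt/3)\bigr),
\]
and restricting to $t\in[0,B]$ bounds the denominator by $2V+2B^2/3=C$, exactly as stated. Combining with the bias estimate via the triangle inequality and setting $t=\eps-\eps_0>0$ produces the claimed probability bound for $\Phi_{k,\alpha}$. Finally, I compose with the ReLU clipping $\psi(z):=\mathrm{ReLU}(z)-\mathrm{ReLU}(z-1)$ as in~\cref{lemma:approximation_x2_regular}: the map $\phi_{k,\alpha}:=\psi\circ\Phi_{k,\alpha}$ takes values in $[0,1]$, raises the depth from $1$ to $2$, and preserves the pointwise error bound because $\psi$ is $1$-Lipschitz and equals the identity on $[0,1]$, while the composition rule in~\cref{proposition:properties_neural_netowrk} places $\phi_{k,\alpha}$ in $\nN(2k,2,K_{k,\alpha})$.

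The main obstacle is choosing the distribution of the $w_i$ finely enough to match the three numerical ingredients $\eps_0$, $B$, and $V$ (and hence $C$) in the statement simultaneously: this effectively pins down both the second and fourth moments of $w_i$, and must be done while keeping $|w_i|\le c_0 k^{-\alpha/2}\le 1$ so that the input layer does not enter the norm constraint. Everything else---norm accounting, Bernstein, the triangle inequality, and clipping---is routine once the distribution is fixed.
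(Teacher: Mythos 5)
Your overall strategy is the same as the paper's: randomize the symmetric two-neuron block $\sigma(wx)+\sigma(-wx)$, average $k$ i.i.d.\ copies, control the bias via Assumption \ref{ass:sigma_exp}, apply Bernstein's inequality, and clip with $\psi=\relu(\cdot)-\relu(\cdot-1)$ exactly as in \cref{lemma:approximation_x2_regular}. However, the one ingredient that makes the stated constants come out — the law of the $w_i$ — is left unspecified, and the bookkeeping you prescribe for it cannot be realized. Writing $Z_i(x)=\frac{w_i^2}{\E[w_i^2]}x^2+d_k\bigl(r(w_ix)+r(-w_ix)\bigr)$, your almost-sure bound $|Z_i|\le B=1+6Mk^{-\alpha}/|a_2|$ evaluated at $x=1$ forces $w_i^2/\E[w_i^2]\le 1+\mathcal O(Mk^{-\alpha}/|a_2|)$ almost surely (the remainder term has size $\mathcal O(Mk^{-\alpha}/|a_2|)$); but a nonnegative random variable with unit mean confined to $[0,1+\delta]$ has variance at most $\delta$, so the normalized variance of the quadratic coefficient would then be $\mathcal O(k^{-\alpha})$, not the $1/3$ you invoke. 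Conversely, with the distribution that actually produces the $1/3$ — the paper's choice $w_i=k^{-\alpha/2}\sqrt{U_i}$, $U_i\sim\mathrm{Unif}[0,1]$, for which $\mathrm{Var}[w_i^2]=k^{-2\alpha}/12$ — the uncentered summand only satisfies $|Z_i|\le 2+2Mk^{-\alpha}/|a_2|$, so your $B$ is not a valid range parameter for Bernstein there. In short, no single distribution matches the triple $(\eps_0,B,V)$ in the way you describe; the step you yourself flag as "the main obstacle" is not only uncarried out but, in the form stated, impossible.

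The paper resolves this by working with the \emph{centered} variables: for the uniform choice above one gets $|Y_i(x)|\le x^2+4Mk^{-\alpha}/|a_2|\le C_1:=1+4Mk^{-\alpha}/|a_2|$ (centering halves the range of the leading term $2U_ix^2$), together with $\mathrm{Var}[Y_i]\le \tfrac13+(2M)^2k^{-2\alpha}/a_2^2$. The quantity $1+6Mk^{-\alpha}/|a_2|$ that you call $B$ never appears as an almost-sure bound; it enters only at the end as $\eps_0+C_1$ when the Bernstein denominator $2C_2+\tfrac23 C_1\delta$ is absorbed into the single constant $C$ (using $\delta\le C_1$, the case $\delta>C_1$ being trivial). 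So although your final constant numerically matches the statement, the intermediate claims do not cohere: to complete the argument you should either adopt the paper's explicit sampling and redo the range/variance accounting on the centered variables, or pick an (essentially degenerate) law for which your $B$ genuinely bounds $|Z_i|$ and accept that the $1/3$ is then only a slack upper bound on a vanishing variance.
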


\begin{proof}
We define the symmetric $2$-neuron block
\[
\mathcal S_{\mathrm{sym}}(w;x) := \sigma( w x ) + \sigma( - w x ).
\]
To ensure the activation function expansion remains valid for $x \in [0,1]$, we require the weights to satisfy $w \le k^{-\alpha/2} \leq \rho$, which holds for $k \ge k_0$. By symmetry, only even powers appear in the Taylor expansion:
\begin{equation}\label{eq:Ssym-expansion}
\mathcal S_{\mathrm{sym}}(w;x) = 2a_2 w^2x^2 + R(w;x), \qquad |R(w;x)| \le 2M\,k^{-2\alpha}.
\end{equation}
Let $\{w_i\}_{i=1}^k$ be independent random variables with distribution $k^{-\alpha/2}\sqrt{U_i}$, where $U_i \sim \text{Unif}([0,1])$ are i.i.d. random variables. We define the random network
\[
\Phi_{k,\alpha}(x):=\frac{1}{a_2\,k^{1-\alpha}}\sum_{i=1}^k \mathcal S_{\mathrm{sym}}(w_i;x).
\]
This function belongs to $\nN(2k, 1, K_{k,\alpha})$ with norm constraint~\eqref{lem6:norm_const}. 

By using~\eqref{eq:Ssym-expansion} and the values of averages $\mathbb E\bigl[w_i^2\bigr] = k^{-\alpha}$ and $ \mathbb E [U_i]=k^{-\alpha} \tfrac12,$ we compute the expected value
\begin{equation*}
\mathbb{E}\bigl[\Phi_{k,\alpha}(x)\bigr] = \frac{1}{a_2 k^{1-\alpha}} \sum_{i=1}^k \mathbb{E}\bigl[\mathcal{S}_{\mathrm{sym}}(w_i; x)\bigr] = \frac{k}{a_2 k^{1-\alpha}} \Bigl( a_2 k^{-\alpha}x^2  + \mathbb{E}[R(w_1; x)] \Bigr) = x^2 + \frac{k^\alpha}{a_2} \mathbb{E}[R(w_1; x)].
\end{equation*}
Since the error term is bounded $|\mathbb{E}[R]| \le 2M k^{-2\alpha}$, the expected value $\mathbb{E}\bigl[\Phi_{k,\alpha}(x)\bigr]$ approximates $x^2$ with error
\[
\bigl\lvert \mathbb{E}[\Phi_{k,\alpha}(x)] - x^2 \bigr\rvert \le \eps_0, \qquad \text{where} \quad \eps_0 := \frac{2M k^{-\alpha}}{|a_2|}.
\]
Now, we define the centered random variables
\[
Y_i(x) := \frac{1}{a_2 k^{-\alpha}}\left(\mathcal S_{\mathrm{sym}}(w_i;x)-\mathbb E \bigl[ \mathcal S_{\mathrm{sym}}(w_i;x) \bigr] \right),
\]
so that $\Phi_{k,\alpha}(x)-\mathbb E\bigl[ \Phi_{k,\alpha}(x) \bigr]=\frac 1k \sum_{i=1}^k Y_i(x)$. These are zero mean bounded i.i.d. random variables:
\begin{align*} 
    |Y_i(x)|  &= \frac{1}{|a_2|k^{-\alpha}} \Big| 2 a_2 \left(x^2 ( w_i^2 - \mathbb{E}[w_i^2])  \right) + R(w_i,x) - \mathbb{E}[R(w_i,x)]\Big| \\
    &\leq x^2 + \frac{4M\,k^{-\alpha}}{|a_2|} \leq C_1, \qquad C_1: = C_1(M, a_2,k,\alpha) = \left( 1+ \frac{4M\,k^{-\alpha}}{|a_2|} \right).
\end{align*}
The variance of $Y_i(x)$ is also bounded
\begin{equation*}
\mathrm{Var}[Y_i(x)] = \frac{1}{a_2^2 k^{-2\alpha}} \Bigl( 4a_2^2 x^4 \mathrm{Var}[w_i^2] + \mathrm{Var}[R(w_i; x)] \Bigr) \le \frac{x^4}{3} + \frac{(2M)^2 k^{-2\alpha}}{a_2^2} \le C_2, \qquad C_2 := \frac{1}{3} + \frac{(2M)^2 k^{-2\alpha}}{a_2^2},
\end{equation*}
where we have used $\text{Var}[w_i^2] = \frac{k^{-2\alpha}}{12}$. Bernstein's inequality yields, for all $\delta>0$,
\[
\mathbb{P}\Bigl( \bigl\lvert \Phi_{k,\alpha}(x) - \mathbb{E}[\Phi_{k,\alpha}(x)] \bigr\rvert > \delta \Bigr) \le 2\exp\left( \frac{-k\delta^2}{2C_2 + \frac{2}{3} C_1 \delta} \right).
\]
We note that, if $\delta > C_1$, the probability is trivially zero as a consequence of the boundness of the random variables $Y_i$. 

For any $\eps > C_1$, the triangle inequality gives
\begin{align*}
\mathbb{P}\bigl( |\Phi_{k,\alpha}(x) - x^2| \le \eps \bigr) &\ge \mathbb{P}\bigl( |\Phi_{k,\alpha}(x) - \mathbb{E}[\Phi_{k,\alpha}(x)]| + |\mathbb{E}[\Phi_{k,\alpha}(x)] - x^2| \le \eps \bigr) \\
&\ge \mathbb{P}\bigl( |\Phi_{k,\alpha}(x) - \mathbb{E}[\Phi_{k,\alpha}(x)]| \le \eps - \eps_0 \bigr).
\end{align*}
By applying Bernstein's inequality with $\delta = \eps - \eps_0$ and setting $C := 2 C_2 + \tfrac{2}{3} (\eps_0+C_1)^2$ due to $|\Phi_{k,\alpha}(x)-\mathbb E\bigl[ \Phi_{k,\alpha}(x)] | < \eps_0,$  we obtain
\begin{equation}\label{eq:ConvProb}
\mathbb{P}\bigl( |\Phi_{k,\alpha}(x) - x^2| \le \eps \bigr) \ge 1 - 2\exp\biggl( \frac{-k(\eps - \eps_0)^2}{C} \biggr).
\end{equation}
Finally, we define $\psi(t):=\mathrm{ReLU}(t)-\mathrm{ReLU}(t-1)\in\nN(2,1,1)$ and set $\phi_{k, \alpha}:=\psi \circ \Phi_{k,\alpha} \in \nN(2k, 2, K_{k,\alpha})$. Since $\psi$ is 1-Lipschitz and $x^2 \in [0,1]$ for $x \in [0,1]$, we have
\[
|\phi_{k,\alpha}(x) - x^2| = |\psi(\Phi_{k,\alpha}(x)) - \psi(x^2)| \le |\Phi_{k,\alpha}(x) - x^2|,
\]
so the probabilistic bound is preserved.
\end{proof}

\begin{remark}
    We notice that, as $k\to\infty$, the probabilistic tolerance $\eps$ in \eqref{eq:x2aub} can be pushed towards zero, and the probability that \eqref{eq:x2aub} holds with such a small tolerance approaches one.
\end{remark}

\begin{remark}
Since the Taylor expansion of an activation function is valid only within its radius of convergence, the random variables must be constructed to remain bounded. However, as $k$ increases, the exponential convergence rate in \eqref{eq:ConvProb} arises from a law of large numbers. Thus, for alternative distributions, such as discrete or truncated normals, where the weights $\{w_i\}_{i=1}^k$ are drawn randomly such that $\pm w_i x$ stays within the radius of convergence, the same exponential bound holds, albeit with a modified constant $C$.
\end{remark}

It is also possible to generalize~\cref{lemma:approx_xy} and~\cref{lemma:approx_xy_d} to the probabilistic setting. Specifically, the construction above can be adapted to approximate $xy$ and then $x_1\cdots x_d$.

\begin{lemma} \label{lemma7}
Let $\alpha>0$ and let $\sigma:\R\rightarrow\R$ be an activation function satisfying Assumption \ref{ass:sigma_exp}. For any $k\in\N$ with $k\ge k_0:=\max\{1,(2/\rho)^{2/{\alpha}}\}$, there exists a random neural network $\phi_{k, \alpha}\in\nN(4k, 2, K_{k,\alpha})$ with
\begin{equation} \label{lem7:norm_const}
    K_{k,\alpha} = \frac{ k^{\alpha}}{|a_2|},
\end{equation}
such that $\phi_{k, \alpha} ([-1,1]^{2})\subseteq [-1,1]$ and $\phi_{k, \alpha}(x,y)$ approximates $f(x,y) = xy$ with the following probabilistic guarantee: for all $\eps > \eps_0 := \tfrac{ 16 M k^{-\alpha}}{|a_2|}$,
\begin{equation} \label{eq:prob_est_lemmaXY}
\mathbb{P}\bigl(\lvert \phi_{k,\alpha}(x,y) - xy \rvert \le \eps\bigr) \ge 1 - 2\exp\biggl(\frac{-k(\eps - \eps_0)^2}{B}\biggr), \qquad B := 2 \left(\frac{1}{3} + \frac{256M^2 k^{-2\alpha}}{a_2^2}\right) + \tfrac{2}{3} \left(1 + \frac{48 M k^{-\alpha}}{|a_2|} \right)^2 .
\end{equation}

\end{lemma}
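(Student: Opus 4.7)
The approach mirrors \cref{lemma:approx_xy} but replaces each deterministic squaring block with the random one from \cref{lemma:approx_random_1}, so that the final estimate follows from a concentration argument rather than a direct Taylor bound. The key identity is the polarization formula
\begin{equation*}
xy = \tfrac{1}{4}\bigl((x+y)^2 - (x-y)^2\bigr),
\end{equation*}
which expresses the product via only two squarings on inputs $t\in[-2,2]$. For $k\ge k_0=(2/\rho)^{2/\alpha}$ we have $2k^{-\alpha/2}\le\rho$, hence $|w_i(x+y)|$ and $|w_i(x-y)|$ remain inside the radius of validity of the Taylor expansion of $\sigma$ for all $(x,y)\in[-1,1]^2$. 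Compared with the three-term identity used in \cref{lemma:approx_xy}, this choice yields a network of width $4k$ rather than $6k$.

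The plan is to draw i.i.d. weights $w_i=k^{-\alpha/2}\sqrt{U_i}$ with $U_i\sim\mathrm{Unif}([0,1])$ exactly as in \cref{lemma:approx_random_1}, and to define
\begin{equation*}
\Phi_{k,\alpha}(x,y) := \frac{1}{4\,a_2\,k^{1-\alpha}}\sum_{i=1}^k\bigl[\mathcal{S}_{\mathrm{sym}}(w_i;x+y)-\mathcal{S}_{\mathrm{sym}}(w_i;x-y)\bigr],
\end{equation*}
with $\mathcal{S}_{\mathrm{sym}}(w;t):=\sigma(wt)+\sigma(-wt)$. The hidden layer consists of the $4k$ neurons $\sigma(\pm w_i(x\pm y))$; each row of the input weight matrix has $1$-norm $2|w_i|\le 2k^{-\alpha/2}\le 1$, and the output row has $4k$ coefficients of modulus $\tfrac{1}{4|a_2|k^{1-\alpha}}$, for total $1$-norm $\tfrac{k^\alpha}{|a_2|}=K_{k,\alpha}$. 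Hence $\Phi_{k,\alpha}\in\nN(4k,1,K_{k,\alpha})$, as claimed.

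For the probabilistic estimate I would expand $\mathcal{S}_{\mathrm{sym}}(w_i;t)=2a_2 w_i^2 t^2+R(w_i;t)$ with $|R(w_i;t)|\le M|w_i t|^4$ and use $(x+y)^2-(x-y)^2=4xy$. Writing $\Phi_{k,\alpha}(x,y)=\tfrac{1}{k}\sum_{i=1}^k T_i$, each summand decomposes as $T_i=2k^\alpha w_i^2\,xy+(\text{remainder})$. Since $\mathbb{E}[w_i^2]=\tfrac{1}{2}k^{-\alpha}$, the leading term gives $\mathbb{E}[T_i]=xy+O(Mk^{-\alpha}/|a_2|)$, and pushing the worst-case bound $|t|^4\le 16$ through the remainder yields $|\mathbb{E}[\Phi_{k,\alpha}(x,y)]-xy|\le\eps_0$ with $\eps_0=\tfrac{16Mk^{-\alpha}}{|a_2|}$. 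The centered variables $Y_i:=T_i-\mathbb{E}[T_i]$ are i.i.d., bounded by $C_1:=1+\tfrac{48Mk^{-\alpha}}{|a_2|}$, and of variance at most $C_2:=\tfrac{1}{3}+\tfrac{256M^2k^{-2\alpha}}{a_2^2}$, where the constants come from the same computation as in the proof of \cref{lemma:approx_random_1} applied to a signed difference of two squarings at arguments bounded by $2$. Applying Bernstein's inequality to $\tfrac{1}{k}\sum_i Y_i$ with $\delta=\eps-\eps_0$ and combining with the triangle inequality $|\Phi_{k,\alpha}(x,y)-xy|\le|\Phi_{k,\alpha}(x,y)-\mathbb{E}[\Phi_{k,\alpha}(x,y)]|+\eps_0$ gives the bound \eqref{eq:prob_est_lemmaXY} with $B=2C_2+\tfrac{2}{3}(\eps_0+C_1)^2$.

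Finally I would set $\psi(t):=\mathrm{ReLU}(t+1)-\mathrm{ReLU}(t-1)-1\in\nN(2,1,1)$ and define $\phi_{k,\alpha}:=\psi\circ\Phi_{k,\alpha}\in\nN(4k,2,K_{k,\alpha})$ through the composition rule of \cref{proposition:properties_neural_netowrk}(ii); since $\psi$ is $1$-Lipschitz and satisfies $\psi(z)=z$ on $[-1,1]$, both the range constraint $\phi_{k,\alpha}([-1,1]^2)\subseteq[-1,1]$ and the probabilistic error estimate are preserved. The main bookkeeping obstacle is tracking the constants $C_1,C_2$ for Bernstein's inequality: the signed difference of two squarings at arguments of magnitude up to $2$ produces the worst-case factor $|t|^4\le 16$ that, once combined with the variance of $w_i^2$ and the cross-terms of $T_i$, must be arranged to exactly match the form of $B$ stated in \eqref{eq:prob_est_lemmaXY}.
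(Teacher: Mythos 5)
Your proposal is correct and follows essentially the same route as the paper: the paper's four-neuron bilinear block $\mathcal S_{\mathrm{bil}}(w;x,y)=\sum_{\eps_1,\eps_2\in\{\pm1\}}\eps_2\,\sigma(\eps_1 wx+\eps_1\eps_2 wy)$ is exactly your polarization difference $\mathcal S_{\mathrm{sym}}(w;x+y)-\mathcal S_{\mathrm{sym}}(w;x-y)$, and the random weights $w_i=k^{-\alpha/2}\sqrt{U_i}$, the bias bound $\eps_0$, the Bernstein step with $B=2C_2+\tfrac23(\eps_0+C_1)^2$, and the ReLU clipping are identical to the paper's proof. The only slip is constant bookkeeping: the almost-sure bound on the centered variables is $C_1=1+\tfrac{32Mk^{-\alpha}}{|a_2|}$ (not $1+\tfrac{48Mk^{-\alpha}}{|a_2|}$), so that $\eps_0+C_1=1+\tfrac{48Mk^{-\alpha}}{|a_2|}$ is what produces the stated $B$; with your larger $C_1$ the same formula would yield a slightly bigger constant than the one in \eqref{eq:prob_est_lemmaXY}.
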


\begin{proof}
We define the symmetric 4-neuron bilinear block
\[
\mathcal S_{\mathrm{bil}}(w;x,y) := \sum_{\eps_1,\eps_2\in\{\pm1\}}\eps_2\,\sigma\left(\eps_1 wx+ \eps_1\eps_2 wy\right).
\]
To ensure the activation function expansion remains valid for $x,y \in [-1,1]$, we require $|w| \le k^{-\alpha/2}$, so that the argument of $\sigma$ satisfies $|\eps_1 wx + \eps_1 \eps_2 wy | \le 2k^{-\alpha/2} \le \rho$, which holds for $k \ge k_0$. By the symmetry of the block, all terms in $x$ and $y$ cancel, leaving
\begin{equation}\label{eq:Sbil-expansion}
\mathcal S_{\mathrm{bil}}(w;x,y) = 8a_2 w^2 xy + R(w;x,y), \qquad |R(w;x,y)| \le 64 M\,k^{-2\alpha}.
\end{equation}
Let $\{w_i\}_{i=1,\ldots,k}$ be independent random variables with distribution $k^{-\alpha/2}\sqrt{U_i}$, where $U_i \sim \text{Unif}([0,1])$ are i.i.d. Next, we define the random network
\[
\Phi_{k,\alpha}(x,y):=\frac{1}{4a_2 \, k^{1-\alpha}} \sum_{i=1}^k \mathcal{S}_{\mathrm{bil}}(w_i; x,y).
\]
This function belongs to $\nN(4k, 1, K_{k,\alpha})$ with norm constraint~\eqref{lem7:norm_const}. By using~\eqref{eq:Sbil-expansion} and $\mathbb{E}[w_i^2] = \frac{1}{2} k^{-\alpha}$, we compute
\begin{align*}
\mathbb{E}\bigl[\Phi_{k,\alpha}(x, y)\bigr] &= \frac{1}{4a_2 k^{1-\alpha}} \sum_{i=1}^k \mathbb{E}\bigl[\mathcal{S}_{\mathrm{bil}}(w_i; x, y)\bigr] = \frac{k}{4a_2 k^{1-\alpha}} \Bigl( 8 a_2 \cdot \tfrac{1}{2} k^{-\alpha} xy + \mathbb{E}[R(w_i; x, y)] \Bigr)
\\ &= xy + \frac{k^\alpha}{4a_2} \mathbb{E}[R(w_i; x, y)].
\end{align*}
Since $|\mathbb{E}[R]| \le 64 M\,k^{-2\alpha}$, the bias satisfies
\[ 
\bigl\lvert \mathbb{E}[\Phi_{k,\alpha}(x, y)] - xy \bigr\rvert \le \eps_0, \qquad \text{where} \quad \eps_0 := \frac{16 M k^{-\alpha}}{|a_2|}.
\]
We define the centered random variables
\[
Y_i(x, y) := \frac{1}{4a_2 k^{-\alpha}} \Bigl( \mathcal{S}_{\mathrm{bil}}(w_i; x, y) - \mathbb{E}[\mathcal{S}_{\mathrm{bil}}(w_i; x, y)] \Bigr) = \frac{1}{4a_2 k^{-\alpha}} \left(8a_2 \left(w^2 - \mathbb{E}[w^2]\right) xy + R-\mathbb{E}[R]\right),
\]
so that $\Phi_{k,\alpha}(x, y) - \mathbb{E}[\Phi_{k,\alpha}(x, y)] = \frac{1}{k} \sum_{i=1}^k Y_i(x, y)$. These are zero mean i.i.d.\ random variables, bounded by
\[
|Y_i(x, y)| \le \frac{1}{4|a_2| k^{-\alpha}} \Bigl( 4|a_2| k^{-\alpha} + 128 M k^{-2\alpha} \Bigr) \le C_1 := 1 + \frac{32 M k^{-\alpha}}{|a_2|},
\]
and with variance bounded by
\begin{align*}
\mathrm{Var}[Y_i(x, y)] &= \frac{1}{16 a_2^2 k^{-2\alpha}} \Bigl( 64 a_2^2 x^2 y^2 \mathrm{Var}[w_i^2] + \mathrm{Var}[R(w_i; x, y)] \Bigr) \\
&\le \frac{x^2 y^2}{3} + \frac{(64 M k^{-2\alpha})^2}{16 a_2^2 k^{-2\alpha}} \le C_2, \qquad C_2 := \frac{1}{3} + \frac{256M^2 k^{-2\alpha}}{a_2^2},
\end{align*}
where we have used $\operatorname{Var}[w_i^2] = \frac{k^{-2\alpha}}{12}$. Proceeding as in~\cref{lemma:approx_random_1}, Bernstein's inequality yields
\[
\mathbb{P}\bigl( |\Phi_{k,\alpha}(x, y) - xy| \le \eps \bigr) \ge 1 - 2\exp\biggl( \frac{-k(\eps - \eps_0)^2}{B} \biggr)
\]
for all $\eps > \eps_0$, where $B := 2 C_2 + \tfrac{2}{3} (\eps_0+C_1)^2$. 
Setting $\phi_{k,\alpha} := \psi \circ \Phi_{k,\alpha}$, where $\psi(t) = \mathrm{ReLU}(t+1) - \mathrm{ReLU}(t-1) - 1$, gives $\phi_{k,\alpha} \in \mathcal{NN}(4k, 2, K_{k,\alpha})$ with range $[-1,1]$. Since $\psi$ is 1-Lipschitz and $xy \in [-1,1]$ for $x, y \in [-1,1]$, the probabilistic bound is preserved.
\end{proof}

\begin{lemma}\label{lemma:random_approx_xy_d}
Let $\alpha>0$ and let $\sigma:\R\rightarrow\R$ be an activation function satisfying Assumption \ref{ass:sigma_exp}. For any $k\in\N$ with $k\ge k_0:=\max\{1,(2/\rho)^{2/{\alpha}}\}$ there exists a random neural network $\phi_{k,\alpha}^{(d)}\in\nN\left(W_d,L_d,K_{k,\alpha,d}\right)$ with
\[
W_d = 4k \biggl\lceil \frac{d}{2} \biggr\rceil, \qquad
L_d = 2 \bigl\lceil \log_2 d \bigr\rceil, \qquad
K_{k,\alpha,d} = \biggl( \frac{k^\alpha}{|a_2|} \biggr)^{\lceil \log_2 d \rceil},
\]
such that $\phi_{k,\alpha}^{(d)} \bigl( [-1,1]^d \bigr) \subseteq [-1,1]$ and $\phi_{k,\alpha}^{(d)}$ approximates the monomial $f(x_1, \dots, x_d) = \prod_{i=1}^d x_i$ with the following probabilistic guarantee: for all $\eps > (2^{\lceil \log_2 d \rceil} - 1) \eps_0$, where $\eps_0 = \frac{16 M k^{-\alpha}}{|a_2|}$,
\begin{equation} \label{eq:defP_0}
\mathbb{P}\Biggl( \biggl| \prod_{i=1}^d x_i - \phi_{k,\alpha}^{(d)}(x_1, \ldots, x_d) \biggr| \le \eps \Biggr) \ge \biggl( 1 - 2\exp\biggl( \frac{-k(\eps_d - \eps_0)^2}{B} \biggr) \biggr)^{\lceil \log_2 d \rceil},
\end{equation}
where $\eps_d := \eps / (2^{\lceil \log_2 d \rceil} - 1)$ and the constant $B$ is defined by \eqref{eq:prob_est_lemmaXY}.
\end{lemma}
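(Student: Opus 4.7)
The plan is to mirror almost exactly the construction and error recursion from the deterministic counterpart in~\cref{lemma:approx_xy_d}, but with each pairwise multiplier now drawn from the \emph{random} bivariate block supplied by~\cref{lemma7} instead of the deterministic one from~\cref{lemma:approx_xy}. Concretely, I organize the computation as a binary tree of depth $D=\lceil\log_2 d\rceil$: at level $\ell$ I pair off the $\lceil d/2^{\ell-1}\rceil$ entries and apply an independent copy of $\phi^{(2)}_{k,\alpha}\in\nN(4k,2,k^\alpha/|a_2|)$ to each pair, with any unmatched component passed through as in the deterministic proof. Because the architectural bookkeeping is identical to~\cref{lemma:approx_xy_d} (only the per-pair width is $4k$ instead of $6$ and the per-pair norm constant is $k^\alpha/|a_2|$ instead of $\tfrac32 k^\alpha/|a_2|$), the composition and concatenation rules of~\cref{proposition:properties_neural_netowrk} immediately yield $W_d=4k\lceil d/2\rceil$, $L_d=2\lceil\log_2 d\rceil$, and $K_{k,\alpha,d}=(k^\alpha/|a_2|)^{\lceil\log_2 d\rceil}$, as stated. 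The range property $\phi_{k,\alpha}^{(d)}([-1,1]^d)\subseteq[-1,1]$ is inherited because each bivariate block clips its output to $[-1,1]$.

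For the error analysis I would reuse the deterministic doubling recursion verbatim. Set $\eps_d:=\eps/(2^{\lceil\log_2 d\rceil}-1)$, and define the event $A_\ell$ that every pairwise multiplication executed at level $\ell$ has error at most $\eps_d$. On the joint event $A:=\bigcap_{\ell=1}^{D} A_\ell$, the bound $|y^{(\ell)}_j-P^{(\ell)}_j|\le 2\eps_{k,\ell-1}+\eps_d$ from~\cref{lemma:approx_xy_d} still applies (since each layer's inputs lie in $[-1,1]$ by clipping and the $1$-Lipschitz multiplication identity is unchanged), which unwinds to $\eps_{k,D}\le(2^{D}-1)\eps_d=\eps$. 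So the whole probabilistic question reduces to lower-bounding $\mathbb P(A)$.

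For the probabilistic step, I draw the random weights of level $\ell$ independently of those of all earlier levels, so the inputs to every node at level $\ell$ are $\mathcal F_{\ell-1}$-measurable and belong to $[-1,1]^2$. Conditional on $\mathcal F_{\ell-1}$, each pointwise application of~\cref{lemma7} yields, provided $\eps_d>\eps_0$ (equivalently $\eps>(2^{\lceil\log_2 d\rceil}-1)\eps_0$),
\[
\mathbb P(A_\ell\mid \mathcal F_{\ell-1})\ \ge\ 1-2\exp\!\Bigl(\tfrac{-k(\eps_d-\eps_0)^2}{B}\Bigr),
\]
where the same single-pair tail from~\cref{lemma7} controls the level as a whole (all pairs at level $\ell$ share the same block of random weights used inside $\phi^{(2)}_{k,\alpha}$, and the pointwise bound applies at each of the random input pairs drawn from $\mathcal F_{\ell-1}$). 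Iterating via the tower property across the $D=\lceil\log_2 d\rceil$ independent levels yields the advertised product in~\eqref{eq:defP_0}.

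The main obstacle I expect is precisely the subtlety that the inputs at level $\ell$ are random and depend on levels $1,\dots,\ell-1$, so~\cref{lemma7}'s pointwise-in-$(x,y)$ guarantee must be combined with conditioning; the independence of the random draws between levels is what makes the product of $D$ factors (rather than a much looser union bound over all $\sum_\ell\lceil d/2^\ell\rceil$ individual multiplications) honest. A secondary bookkeeping point is to verify that $\eps_d-\eps_0>0$ is consistent with the hypothesis $\eps>(2^{\lceil\log_2 d\rceil}-1)\eps_0$, and that the $1$-Lipschitz clip $\psi$ used at every node both preserves the approximation error and keeps all intermediate states in the hypercube needed for the next application of~\cref{lemma7}.
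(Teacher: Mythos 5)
Your proposal is correct and follows essentially the same route as the paper's own proof: a binary-tree composition of the random bilinear blocks from~\cref{lemma7}, the same architectural bookkeeping via~\cref{proposition:properties_neural_netowrk}, the deterministic doubling recursion of~\cref{lemma:approx_xy_d} on the success event, and conditioning with the tower property across the $D=\lceil\log_2 d\rceil$ independently sampled levels to obtain the product of $D$ factors in~\eqref{eq:defP_0}. Even the one delicate point you flag --- collapsing the $\lceil d/2^\ell\rceil$ pairwise multiplications at a given level into a single event with the single-pair tail --- is handled with exactly the same level of justification as in the paper, which likewise treats each level as one event after invoking~\cref{lemma7} conditionally on the previous level's (clipped) outputs.
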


\begin{proof} 
We construct $\phi_{k,\alpha}^{(d)}$ by composing random bilinear networks $\phi_{k,\alpha}^{(2)}$ from~\cref{lemma7} in a binary tree structure, following the same construction as in~\cref{lemma:approx_xy_d}.

Similarly, we set $D := \lceil \log_2 d \rceil$. At level $\ell \in \{1, \ldots, D\}$, we apply $\lceil d / 2^\ell \rceil$ independent copies of $\phi_{k,\alpha}^{(2)}$ to compute pairwise products of the outputs from level $\ell - 1$. The width at each level is at most $4k \lceil d/2 \rceil$, the depth is $2D$, and the weight norm compounds to $K_{k,\alpha} = (k^\alpha / |a_2|)^{D}$.

Next, we consider $\eps_k > \eps_0$ to be the error tolerance for each $\phi_{k,\alpha}^{(2)}$. By the same telescoping argument as in~\cref{lemma:approx_xy_d}, if each bilinear approximation satisfies $|\phi_{k,\alpha}^{(2)}(x, y) - xy| \le \eps_k$, then the accumulated error after $D$ levels satisfies
\[
\biggl| \prod_{i=1}^{d} x_i - \phi_{k,\alpha}^{(d)}(x_1, \ldots, x_d) \biggr| \le (2^D - 1) \eps_k,
\]
where $\eps_k$ is an error whose probability distribution is characterized by \eqref{eq:prob_est_lemmaXY}.
At each level $1\leq\ell \leq D$, the random weights are sampled independently from all other levels. Conditional on the outputs of level $\ell - 1$ lying in $[-1, 1]$ (which holds after the clipping operation), \cref{lemma7} guarantees that each $\phi_{k,\alpha}^{(2)}$ at level $\ell$ satisfies its error bound with a probability of at least $1 - 2\exp(-k(\eps_k - \eps_0)^2 / B)$.

Taking a union bound over the $\lceil d / 2^\ell \rceil$ networks at level $\ell$, and then bounding by treating each level as a single event, the probability that all $D$ levels succeed is at least
\[
\mathbb{P}\Biggl( \biggl| \prod_{i=1}^d x_i - \phi_{k,\alpha}^{(d)}(x_1, \ldots, x_d) \biggr| \le \eps \Biggr) \ge \biggl( 1 - 2\exp\biggl( \frac{-k(\eps_k - \eps_0)^2}{B} \biggr) \biggr)^{D}.
\]
To achieve a total error of at most $\eps$, we introduce $\eps_d := \eps / (2^{D} - 1)$. The constraint $\eps_k > \eps_0$ requires $\eps > (2^{D} - 1) \eps_0$, yielding the stated bound.
\end{proof}

To conclude this section, we show that~\cref{lemma:approximation_functions} admits a randomized analogue, from which~\cref{thm:main_pr} follows as an immediate consequence.

\begin{lemma} \label{lem:lem9}
Let $d\in \mathbb{N}$ and $m\in \mathbb{N}$ fixed. Let $\beta \in (0,1]$ and $r:=m+\beta$.
Let $\alpha>0$ and let $\sigma:\R\rightarrow\R$ be an activation function satisfying Assumption \ref{ass:sigma_exp}. For any $f\in\Lip_r$ and for any $k\in\N$ with $k\ge k_0:=\max\{1,(\rho/2)^{-2/{\alpha}}\}$, there exists a random neural network $\phi_{k,\alpha} \in \nN\left(W_{m,d,k,\alpha},L_d,K_{m,d, k, \alpha}\right)$ with parameters
\begin{equation*} \label{defWLKLemma9}
W_{m,d,k,\alpha} = 4k \biggl\lceil \frac{d}{2} \biggr\rceil \binom{m+d}{d} k^{d\frac{\alpha}{r}}, \qquad
L_d = 2 \bigl\lceil \log_2 d \bigr\rceil, \qquad
K_{m,d, k, \alpha} = C_0 \binom{m+d}{d} k^{\alpha\lceil \log_2 d \rceil+d\frac{\alpha}{r}} ,
\end{equation*}
with $C_0=|a_2|^{-\lceil \log_2 d \rceil}$, such that $\phi_{k,\alpha}$ approximates $f$ with the following probabilistic guarantee: for all $\eps>0$
\begin{equation} \label{eq:prob_est_lemma9}
\mathbb{P}\Biggl(\bigl|f(x) - \phi_{k,\alpha}(x)\bigr| \le \underbrace{ F \eps }_{\text{random estimation error}}+ 
\underbrace{G k^{-\alpha} }_{\text{deterministic estimation}}\Biggr) \geq \biggl( 1 - 2\exp\biggl( -\frac{k\eps^2 }{B} \biggr) \biggr)^{\lceil \log_2 d \rceil \binom{m+d}{d}},
\end{equation}
where $B$, $F$, and $G$ are the constant parameters defined by \eqref{eq:prob_est_lemmaXY} and  
\begin{equation*} \label{def:FandG}
F:= (2^{\lceil \log_2 d \rceil} - 1) e^d, \qquad G:= e^d \left(1 + \frac{16 M (2^{\lceil \log_2 d \rceil} - 1 )}{|a_2|}\right). 
\end{equation*}
\end{lemma}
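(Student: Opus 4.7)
The plan is to lift the deterministic construction of \cref{lemma:approximation_functions} to the randomized setting by replacing each deterministic monomial approximation with an independent copy of the random bilinear tree from \cref{lemma:random_approx_xy_d}, and to track the probabilistic error via independence of the random weights across the multi-indices.

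First, I would reuse the geometric scaffolding of \cref{lemma:approximation_functions}: partition $[0,1]^d$ into $N_\gamma = k^{d\alpha/r}$ cubes $\{Q_j\}$ of side $h=k^{-\gamma}$ with the balancing choice $\gamma=\alpha/r$, attach the same partition of unity $\{\rho_j\}$ from \eqref{def:rhoj}, and perform the local Taylor expansion of $f$ at each center $x_j$, so that the Lagrange remainder satisfies $|R_m(x,x_j)|\le C_1(d,m)\,k^{-\gamma(m+\beta)}=C_1\,k^{-\alpha}$ as in \eqref{eq:Rm_bound}. For every cube index $j$ and every multi-index $\mathbf s$ with $\|\mathbf s\|_1\le m$, I would then draw \emph{independent} random weights and invoke \cref{lemma:random_approx_xy_d} to obtain $\phi_{k,\mathbf s,j}$ approximating the monomial $(x-x_j)^{\mathbf s}$ on $[-1,1]^d$, and assemble the local polynomial approximation $\phi_{k,j}(x):=\sum_{\|\mathbf s\|_1\le m}\tfrac{\partial^{\mathbf s}f(x_j)}{\mathbf s!}\phi_{k,\mathbf s,j}(x)$, and finally the global network $\phi_{k,\alpha}(x):=\sum_{j=1}^{N_\gamma}\rho_j(x)\phi_{k,j}(x)$.

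The architectural parameters $W_{m,d,k,\alpha}$, $L_d$, $K_{m,d,k,\alpha}$ are read off exactly as in \cref{lemma:approximation_functions}, using \cref{proposition:properties_neural_netowrk}: the concatenation over the $\binom{m+d}{d}$ multi-indices and the $N_\gamma$ cubes multiplies the width $4k\lceil d/2\rceil$ of each random monomial block, the depth is inherited unchanged from \cref{lemma:random_approx_xy_d}, and the weight-norm constraint picks up the multiplicative factor $\binom{m+d}{d}\,k^{d\alpha/r}$ on top of the per-monomial norm $(k^\alpha/|a_2|)^{\lceil\log_2 d\rceil}$, giving the $C_0=|a_2|^{-\lceil\log_2 d\rceil}$ prefactor. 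For the pointwise error at $x\in[0,1]^d$, I would combine the partition-of-unity identity with the Taylor bound to get
\[
|f(x)-\phi_{k,\alpha}(x)|\;\le\;\sum_{j:\rho_j(x)>0}\rho_j(x)\Bigl(|R_m(x,x_j)|+\sum_{\|\mathbf s\|_1\le m}\tfrac{|\partial^{\mathbf s}f(x_j)|}{\mathbf s!}\,\bigl|(x-x_j)^{\mathbf s}-\phi_{k,\mathbf s,j}(x)\bigr|\Bigr),
\]
then apply $|\partial^{\mathbf s}f|\le1$ together with the multinomial identity $\sum_{\mathbf s\in\N^d}\tfrac{1}{\mathbf s!}=e^d$ to bound the Taylor-coefficient sum by $e^d$. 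Plugging in the per-monomial error guaranteed by \cref{lemma:random_approx_xy_d} at tolerance $(2^{\lceil\log_2 d\rceil}-1)(\eps+\eps_0)$ yields precisely the decomposition $F\eps+Gk^{-\alpha}$, where the deterministic offset $Gk^{-\alpha}$ collects the Taylor remainder and the $(2^{\lceil\log_2 d\rceil}-1)\eps_0$ contribution from each monomial, and the factor $e^d$ enters both $F$ and $G$ through the Taylor-sum bound.

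For the probability, independence of the weights across multi-indices lets me take a product rather than a union bound: each of the $\binom{m+d}{d}$ random monomial networks succeeds with probability at least $(1-2\exp(-k\eps^2/B))^{\lceil\log_2 d\rceil}$ by \cref{lemma:random_approx_xy_d}, and intersecting these independent events produces the exponent $\lceil\log_2 d\rceil\binom{m+d}{d}$ in \eqref{eq:prob_est_lemma9}. The main obstacle I anticipate is the careful bookkeeping that the $e^d$ absorbing constant in $F$ and $G$ also dominates the contribution from overlapping cubes (for $x$ in the interior of a cube only one $\rho_j$ is effectively nonzero, but near cube boundaries up to $2^d$ cubes contribute through the partition of unity), and that the affine reparameterization $y=k^\gamma(x-x_j)$ needed to feed each monomial network inputs in $[-1,1]^d$ is charged consistently to the norm budget $K_{m,d,k,\alpha}$; both are handled exactly as in \cref{lemma:approximation_functions} and do not interact with the probabilistic step, which is the only genuinely new ingredient.
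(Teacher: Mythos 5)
Your construction, architecture bookkeeping, error decomposition via the partition of unity and Taylor remainder, the $e^d$ bound on $\sum_{\mathbf s}1/\mathbf s!$, and the reparameterization of the tolerance that folds the $(2^{\lceil\log_2 d\rceil}-1)\eps_0$ shift into $G k^{-\alpha}$ all match the paper's proof. However, there is a genuine gap in the probabilistic bookkeeping, and it sits exactly at the point where the paper's proof is careful. You propose to draw \emph{independent} random weights for every pair (cube $j$, multi-index $\mathbf s$), yet you then claim the probability bound by intersecting only $\binom{m+d}{d}$ independent events, one per multi-index. Under your sampling scheme this is inconsistent: at a point $x$ near a cube boundary, up to $2^d$ cubes have $\rho_j(x)>0$, so the error at $x$ depends on the success of up to $2^d\binom{m+d}{d}$ mutually independent monomial networks, and the exponent you can get is $2^d\lceil\log_2 d\rceil\binom{m+d}{d}$, not the stated $\lceil\log_2 d\rceil\binom{m+d}{d}$; if instead you want a single realization that works simultaneously in all cubes, the exponent inflates to $N_\gamma\binom{m+d}{d}=k^{d\alpha/r}\binom{m+d}{d}$, which grows with $k$ and destroys the stated bound unless $\eps$ is enlarged. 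Your remark that the overlap of the $\rho_j$ ``does not interact with the probabilistic step'' is therefore incorrect for your construction: it interacts precisely through the number of independent events that must succeed.

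The paper avoids this by a device you omit (and implicitly contradict): for each multi-index $\mathbf s$ the random product network of \cref{lemma:random_approx_xy_d} is sampled \emph{once} and the same realization is reused in every cube, composed with the deterministic affine shifts $x\mapsto x-x_j$; only the $\binom{m+d}{d}$ events (each already carrying the $\lceil\log_2 d\rceil$ exponent from the tree construction) are intersected, which is what produces the exponent $\lceil\log_2 d\rceil\binom{m+d}{d}$ independently of the number of cubes $N_\gamma$. Replacing your per-cube independent draws by this sharing of the random monomial networks repairs the argument and recovers the stated estimate; the rest of your proposal then goes through as in \cref{lemma:approximation_functions}. (A minor further point: feeding the rescaled input $k^\gamma(x-x_j)$ into the product network, as you suggest, approximates $k^{\gamma\|\mathbf s\|_1}(x-x_j)^{\mathbf s}$ and would need compensation in the output coefficients; the paper simply uses $x-x_j\in[-1,1]^d$ directly.)
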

\begin{proof}
By generalizing the proof of~\cref{lemma:approximation_functions}, we partition the cube $[0,1]^d$ into $N_\gamma := h^{-d} =  k^{\gamma d}$ axis-aligned cubes $\{Q_j\}_{j=1}^{N_\gamma}$, each of side length $h = k^{-\gamma}$. We label each cube $Q_j$ centered at $x_j \in [-1,1]^d$ by an index $j$. As we stated in \eqref{eq:Rm_bound}, the local multi-variable Taylor expansion of a function $f\in\Lip_{r}$ at point $x_j$ satisfies
\[
f(x) = T_m(x,x_j) + R_m(x,x_j), \qquad T_m(x, x_j) := \sum_{|\mathbf{s}| \le m} \frac{\partial^{\mathbf{s}} f(x_\nu)}{\mathbf{s}!} (x - x_\nu)^{\mathbf{s}}, \qquad |R_m(x,x_j)| \le C_1 k^{-\gamma(m+\beta)},
\]
where the constant $C_1$ is defined by \eqref{eq:Rm_bound}.

For each multi-variable monomial $(x-x_j)^{\boldsymbol{s}}$ with $|\boldsymbol{s}| \leq m$, we construct a random neural network $\phi_{k,\boldsymbol{s},j}(x)$ which approximates the monomial  according to the construction of  \cref{lemma:random_approx_xy_d}. Denoting the  right-hand side of \eqref{eq:defP_0} by 
\[
\mathbb{P_0}(\eps): = \biggl( 1 - 2\exp\biggl( \frac{-k(\eps (2^{\lceil \log_2 d \rceil} - 1)^{-1} - \eps_0)^2}{B} \biggr) \biggr)^{\lceil \log_2 d \rceil},
\]
the probability of an estimation error $\eps_1 > (2^{\lceil \log_2 d \rceil} - 1) \eps_0$ is bounded as follows
\[
\mathbb{P}\Biggl( \biggl| (x-x_j)^{\boldsymbol{s}} - \phi_{k,\boldsymbol{s},j}(x)\biggr| \le \eps_1 \Biggr) \ge \mathbb{P}_0(\eps_1).
\]

Similarly to the non-random case, we define a random neural network $\phi_{k,j}(x)$ approximating a Taylor approximation term $T_m(x,x_j)$ via a linear combination of already constructed neural networks $\phi_{k,\mathbf{s},j}(x)$
\[
    \phi_{k,j}(x) = \sum_{|\mathbf{s}| \leq m} \frac{\partial^{\mathbf{s}} f(x_j)}{\mathbf{s}!}\phi_{k,\mathbf{s},j}(x), \quad \text{with } \phi_{k,j}\in\nN\left(4k\left\lceil\frac{d}{2}\right\rceil \binom{m+d}{d} , 2\lceil\log_2 d\rceil , C_0\binom{m+d}{d} k^{\alpha\lceil\log_2 d\rceil} \right).
\]
The architecture parameters of $\phi_{k,j}$ are computed by applying \cref{proposition:properties_neural_netowrk}(iv) as in \cref{lemma:approximation_functions}. For simplicity, we require the error $\eps_1$ to be the same for each multi-index $\boldsymbol{s}$. Then the accumulated error of the Taylor expansion approximation is bounded as follows
\begin{align*}
 \biggl| T_m(x,x_j) - \phi_{k,j}(x) \biggr| =  \sum_{|\mathbf{s}| \leq m} \frac{|\partial^{\mathbf{s}} f(x_j)|}{\mathbf{s}!}\bigl|(x-x_j)^{\mathbf{s}} - \phi_{k,\mathbf{s},j}(x)\bigr| \leq E\eps_1, \qquad E = E(d,m) := \sum_{|\mathbf{s}| \leq m} \frac{1}{\mathbf{s}!}.
\end{align*}
We note that a random neural network $\phi_{k,\mathbf{s},j}(x)$ for a fixed $\mathbf{s}$ is constructed only once and then is used for the Taylor polynomial construction in each box. The probability bound estimate decreases as 
\begin{align*}
\mathbb{P}\Biggl( \biggl| T_m(x,x_j) - \phi_{k,j}(x) \biggr| \le E\eps_1 \Biggr) \geq \mathbb{P}_0(\eps_1)^{\binom{m+d}{d}}.
\end{align*}
At each cube $Q_j$ the random neural network $\phi_{k,j}(x)$ approximates the function $f(x)$ with a probabilistic error and the Taylor expansion error term:
\[
\bigl|f(x) - \phi_{k,j}(x)\bigr| \le  \biggl| T_m(x,x_j) - \phi_{k,j}(x) \biggr|+ \bigl|R_m(x,x_j)\bigr| \leq E \eps_1+C_1 k^{-\gamma r}.
\]

By using the partition of unity $\rho_j(x)$ defined in \eqref{def:nu} and \eqref{def:rhoj}, we similarly define 
\[
\phi_{k,\alpha}(x) := \sum_{j=1}^{N_\gamma} \rho_j(x)\, \phi_{k,j}(x) \quad \text{with } \phi_{k,\alpha}\in\nN\left(4k\left\lceil\frac{d}{2}\right\rceil \binom{m+d}{d} k^{d\gamma}, 2\lceil\log_2 d\rceil , C_0\binom{m+d}{d} k^{\alpha\lceil\log_2 d\rceil + d\gamma} \right),
\]
such that the accumulated error is bounded as follows
\[
\bigl|f(x) - \phi_{k,\alpha}(x)\bigr| \leq \sum_{j=1}^{N_\gamma} \rho_j(x)\, \bigl|f(x) - \phi_{k,j}(x)\bigr| \leq \sum_{j=1}^{N_\gamma} \rho_j(x)\bigl(E\eps_1+C_1 k^{-\gamma r} \bigr) \leq E\eps_1+C_1 k^{-\gamma r} ,
\]
with the corresponding probability bound for any $\eps_1 > (2^{\lceil \log_2 d \rceil} - 1) \eps_0$ given by
\[
\mathbb{P}\Biggl(\bigl|f(x) - \phi_{k,\alpha}(x)\bigr| \le E\eps_1+C_1 k^{-\gamma r}  \Biggr) \geq \mathbb{P}_0(\eps_1)^{\binom{m+d}{d}}.
\]
The architecture parameters of $\phi_{k,\alpha}$ are computed by applying \cref{proposition:properties_neural_netowrk}(iv) as in \cref{lemma:approximation_functions}. Substitution of $\tilde{\eps}_1:=  \eps_1 (2^{\lceil \log_2 d \rceil} - 1)^{-1} - \eps_0$ delivers
\[
\mathbb{P}\Biggl(\bigl|f(x) - \phi_{k,\alpha}(x)\bigr| \le E(2^{\lceil \log_2 d \rceil} - 1)(\tilde{\eps}_1 + \eps_0)+C_1 k^{-\gamma r}  \Biggr) \geq \mathbb{P}_0\left((2^{\lceil \log_2 d \rceil} - 1)(\tilde{\eps}_1 + \eps_0)\right)^{\binom{m+d}{d}}.
\]
The result  \eqref{eq:prob_est_lemma9} follows with the choice of the mesh parameter $\gamma = \alpha/r$, motivated by \cref{remark:balancing_powers}:
\[
\mathbb{P}\Biggl(\bigl|f(x) - \phi_{k,\alpha}(x)\bigr| \le \underbrace{ F \tilde{\eps}_1 }_{\text{random estimation error}}+ 
\underbrace{G k^{-\alpha} }_{\text{non-random error of estimation}}\Biggr) \geq \biggl( 1 - 2\exp\biggl( -\frac{k\tilde{\eps}_1^2 }{B} \biggr) \biggr)^{\lceil \log_2 d \rceil \binom{m+d}{d}},
\]
where we set $F_1:= (2^{\lceil \log_2 d \rceil} - 1) E$ and $G_1 := \frac{16 M F_1 }{|a_2|}+\sum_{|\mathbf{s}|=m} \frac{1}{2^{m+\beta}\mathbf{s}!}$ and we bound $F_1< (2^{\lceil \log_2 d \rceil} - 1) e^d =: F$ and $G_1< e^d\left(1 + \frac{16 M (2^{\lceil \log_2 d \rceil} - 1)}{|a_2|}\right)=:G$, by using the Taylor expansion of $e^{dx}=\prod_{i=1}^d \sum_{s=0}^\infty \frac{x^s}{s!}$, $x\in\R$.
\end{proof}

We are now ready to demonstrate ~\cref{thm:main_pr}.

\begin{proof}[Proof of \cref{thm:main_pr}]
We apply~\cref{lem:lem9} to obtain, for any function $f\in\Lip_r$, that there exists a random neural network $\phi_{k,\alpha}\in\nN(W,L,K)$ such that, for all $\eps>0$,
\[
    \mathbb{P}(|f(x) - \phi_{k,\alpha}(x)| \le F \eps + G k^{-\alpha} ) \geq \biggl( 1 - 2\exp\biggl( -\frac{k\eps^2 }{B} \biggr) \biggr)^{\lceil \log_2 d \rceil \binom{m+d}{d}},
\]
with $W \ge 4k\bigl\lceil \frac{d}{2} \bigr\rceil \binom{m+d}{d} k^{d\alpha/r}$, $L\ge2\lceil\log_2 d\rceil$ and $K \ge \binom{m+d}{d} |a_2|^{-\lceil \log_2 d \rceil} k^{\alpha(\lceil\log_2 d\rceil+d/r)}\ge \binom{m+d}{d} |a_2|^{-\lceil \log_2 d \rceil} k^\alpha$. It follows from such architecture parameters that
\[
    W \ge 4k\left\lceil \frac{d}{2} \right\rceil \binom{m+d}{d} k_0^{d\alpha/r} =: c_1, \qquad K \ge \frac{k_0^{\alpha\lceil\log_2d\rceil}}{4\lceil d/2 \rceil |a_2|^{\lceil\log_2d\rceil}}  W =: c_2W. 
\]
We then use the convexity of the function $(1-x)^n$ -- which implies $(1-x)^n \geq 1-nx$ for any $x \in [0,1]$ -- to get
\begin{equation*}
   \mathbb{P}\Biggl(\bigl|f(x) - \phi_{k,\alpha}(x)\bigr| \le F \eps + G k^{-\alpha} \Biggr) \geq 1 - 2\lceil \log_2 d \rceil \binom{m+d}{d} \exp\biggl( -\frac{k\eps^2 }{B} \biggr).
\end{equation*}
By introducing $C_W =4k\bigl\lceil \frac{d}{2} \bigr\rceil \binom{m+d}{d}$ and $C_K = \binom{m+d}{d} |a_2|^{-\lceil \log_2 d \rceil}$, if we choose any $k\in\N$, $k\ge\max\{1,(2/\rho)^{2/\alpha}\}$, satisfying
\[
    k \ge \max\left\{\left(\frac{W}{C_W}\right)^{r/(d\alpha)}, \left(\frac{K}{C_K}\right)^{\,1/\alpha}\right\},
\]
and we appropriately define the constants $C_1$, $C_2$ and $C_3$, then
\[
    \mathbb{P}\left(|f(x) - \phi_{k,\alpha}(x)| \le C_1 \left(W^{-r/d}+K^{-1}\right) + \eps \right) \geq 1 - C_2 \exp\left( -C_3K\eps^2 \right).
\]
uniformly over $\phi\in\nN(W,L,K)$ and $f\in\Lip_r$. This proves~\eqref{eq:main_upper_smooth_pr}.
\end{proof}

\section{Conclusions}
In this paper we have studied the approximation capacity of neural networks with an arbitrary activation function and with norm constraint on the weights. Specifically, we have computed upper and lower bounds to the approximation error of these networks for smooth function classes. We have proven the upper bound by first approximating high-degree monomials and then generalizing it to functions via a partition of unity and Taylor expansion. We have then derived the lower bound through the Rademacher complexity of neural networks, which may be of independent interest. We have also provided a probabilistic version of the upper bound by considering neural networks with randomly sampled weights and biases. Finally, we have shown that the assumption on the regularity of the activation function can be significantly weakened without worsening the approximation error, and the approximation upper bound is validated with numerical experiments.

\section{Breakdown of authors' contributions}
Arturo De Marinis has had the idea of this research work, has written the introduction and the first half of the preliminaries, has carefully revised the entire paper, and has attended the meetings for the development of the paper. Francesco Paolo Maiale has had the main idea to generalize the results of \cite{JIAO2023249} to deep neural networks with arbitrary activation functions, has written the draft of the remaining part of the paper, has carefully revised it, and has attended the meetings for the development of the paper. Anastasiia Trofimova has mainly contributed to the development of \cref{sec:upper_bound_nd}, has carefully revised the entire paper, and has attended the meetings for its development.


\section*{Acknowledgments}

Francesco Paolo Maiale acknowledge that his research was supported by funds from the Italian MUR (Ministero dell'Universit\`a e della Ricerca) within the PRIN 2022 Project ``Advanced numerical methods for time dependent parametric partial differential equations with applications'' and the PRIN-PNRR Project ``FIN4GEO''.
He is also affiliated to the INdAM-GNCS (Gruppo Nazionale di Calcolo Scientifico).

Francesco Paolo Maiale thanks Luis Eduardo Iba\~nez Perez and Pietro Sittoni (Gran Sasso Science Institute) for the helpful discussions and unwavering support throughout the finalization of this paper.

\appendix

\section{Additional results for deterministic quadratic approximation}  \label{appendix:additional}

In this section, we present additional results related to the quadratic approximation discussed in~\cref{sec:upper_bound}. These results, while not strictly necessary for the proof of~\cref{thm:main}, provide important context for our construction. Specifically, we show that smooth activation functions cannot exactly represent the hard clipping function, which motivates the use of ReLU. We also prove that the parameter scaling in~\cref{lemma:approximation_x2_regular} is optimal.

\begin{proposition}\label{prop:no_clipping}
Let $\sigma\in C^1(\mathbb R)$ and let $\nN(W,L)$ be the class of functions represented by width $W$ and depth $L$ neural networks with activation function $\sigma$ and arbitrary weights and biases. Then every $\phi\in\nN(W,L)$ belongs to $C^1(\mathbb R)$. Consequently, no $\phi\in\nN(W,L)$ can equal the hard clipping function
\[
    \psi(x):=\min\bigl\{\max\{x,0\},1\bigr\},\qquad x\in\R.
\]
\end{proposition}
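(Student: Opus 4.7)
The plan is a short two-part argument: first propagate smoothness through the network by induction on depth, then exhibit a non-smoothness of $\psi$ to derive a contradiction.

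For the first part, I would argue by induction on $L$. The base case $L=0$ reduces to $\phi=\ell_0$, which is affine and hence $C^1$. For the inductive step, write $\phi=\ell_L\circ \sigma_{L-1}\circ \tilde\phi$, where $\tilde\phi=\ell_{L-1}\circ \sigma_{L-2}\circ\cdots\circ \ell_0$ is $C^1$ by the inductive hypothesis. Since $\sigma\in C^1(\R)$, its entrywise application $\sigma_{L-1}$ is $C^1$ as a map between Euclidean spaces; composing with the affine maps $\ell_L$ and $\tilde\phi$ preserves the $C^1$ regularity via the chain rule. Therefore $\phi\in C^1(\R^d)$, which gives the first claim.

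For the second part, I would observe that the clipping function $\psi$ has one-sided derivatives
\[
\lim_{x\to 0^-}\frac{\psi(x)-\psi(0)}{x}=0, \qquad \lim_{x\to 0^+}\frac{\psi(x)-\psi(0)}{x}=1,
\]
so $\psi$ fails to be differentiable at the origin (and symmetrically at $x=1$). In particular $\psi\notin C^1(\R)$, which, combined with the first step, rules out any representation $\psi=\phi$ with $\phi\in\nN(W,L)$.

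I do not foresee a genuine obstacle: the argument is essentially a chain-rule induction. The one subtlety worth highlighting is that $C^1$ regularity must be preserved at \emph{every} composition, so the hypothesis $\sigma\in C^1$ is invoked at each of the $L$ nonlinear layers; no amount of width or depth can manufacture the kinks at $x=0,1$, which explains why the ReLU-based clipper used in \cref{lemma:approximation_x2_regular} cannot be dispensed with when $\sigma$ is smooth.
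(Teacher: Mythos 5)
Your proposal is correct and follows essentially the same route as the paper's proof: an induction over layers using the chain rule to show every network with a $C^1$ activation is itself $C^1$, followed by the observation that $\psi$ fails to be differentiable at $0$ and $1$. The only difference is that you spell out the one-sided derivatives at $0$ explicitly, which the paper leaves implicit.
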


\begin{proof}
Compositions of $C^1$ maps are $C^1$, and finite sums of $C^1$ maps are $C^1$. By induction over layers, every function $\phi$ of finite depth built from $\sigma\in C^1$ belongs to $C^1(\mathbb R)$. However, $\psi$ is not differentiable at $0$ and $1$, hence $\psi\notin C^1(\mathbb R)$. Therefore, no $\phi\in\nN(W,L)$ can equal $\psi$ pointwise.
\end{proof}

Next, we show that the parameter choice in~\cref{lemma:approximation_x2_regular} is optimal in the following sense: achieving uniform error $\mathcal{O}(k^{-\alpha})$ with the two-neuron network requires the parameter norm to scale as $k^{\alpha}$.

\begin{proposition}[Optimal scaling] \label{prop:opt_scaling_even2}
Let $\sigma$ be an activation function satisfying Assumption \ref{ass:sigma_exp}. We consider the two-neuron symmetric neural network
\[
    \Phi_k(x) = c\left(\sigma(wx)+\sigma(-wx)\right), \qquad x\in[0,1].
\] 
If $\max_{x \in [0,1]}|x^2 - \Phi_k(x)| = \mathcal{O}(k^{-\alpha})$, then $c = \mathcal O (k^\alpha)$ and $K = \mathcal O (k^\alpha)$.
\end{proposition}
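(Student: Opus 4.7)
The strategy is to read off two matching scaling constraints on $(c,w)$ from the uniform approximation bound and solve the resulting system. The symmetrization $\sigma(wx)+\sigma(-wx)$ annihilates the odd terms $a_1 t$ and $a_3 t^3$ of Assumption~\ref{ass:sigma_exp}, so on $\{|wx|\le\rho\}$
\[
\Phi_k(x) = 2 c a_2 w^2 x^2 + c R(x), \qquad |R(x)| \le 2 M w^4 x^4.
\]
Setting $\gamma := 2 c a_2 w^2$, the error identity $\Phi_k(x)-x^2 = (\gamma-1)x^2 + cR(x)$ drives the argument; the lower bound $|c|,K\gtrsim k^\alpha$ (which the notation $\mathcal O(k^\alpha)$ expresses here as the optimal scaling) then follows from playing the two summands against each other.

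First I treat the natural regime $|w|\le\rho$, where the expansion is valid on all of $[0,1]$. Evaluating at $x=1$ yields the \emph{coefficient-matching} inequality
\[
|\gamma - 1| \le C k^{-\alpha} + 2|c|M w^4.
\]
A complementary \emph{residual-control} bound $|c|w^4\lesssim k^{-\alpha}$ is extracted by forming a suitable finite-difference of $\Phi_k$ at equally spaced nodes in $[0,1]$: the order is chosen so that the difference annihilates $x^2$ identically and isolates the leading non-quadratic even Taylor mode of $\sigma(wx)+\sigma(-wx)$, after which the $O(k^{-\alpha})$ uniform bound caps that mode. Substituting back gives $|\gamma-1|\lesssim k^{-\alpha}$, hence $|c|w^2 = \tfrac{1}{2|a_2|}\bigl(1+O(k^{-\alpha})\bigr) = \Theta(1)$. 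Dividing the two constraints,
\[
w^2 = \frac{|c|w^4}{|c|w^2}\lesssim k^{-\alpha}, \qquad |c| = \frac{|c|w^2}{w^2}\gtrsim k^\alpha,
\]
and since $|w|\le\rho\le 1$, $K=2|c|\max(|w|,1)=2|c|\gtrsim k^\alpha$, exactly matching the construction in~\cref{lemma:approximation_x2_regular}.

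For the off-regime $|w|>\rho$, the same expansion is rerun on the shrunken sub-interval $[0,\rho/|w|]\subseteq[0,1]$ and combined with the constraint $\Phi_k(1)\approx 1$; the latter forces $|\sigma(w)+\sigma(-w)|\gtrsim 1/|c|$, so trading a smaller $|c|$ for a larger $|w|$ inflates $K=2|c||w|$ by a compensating factor and again yields $K\gtrsim k^\alpha$.

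The principal obstacle is the rigorous extraction of the residual-control bound. Since $R$ admits only an upper bound, a single-point evaluation cannot rule out cancellations between $(\gamma-1)x^2$ and $cR(x)$; the finite-difference device bypasses this by projecting onto the leading non-quadratic mode of $\sigma(wx)+\sigma(-wx)$, and the argument tacitly needs this mode to be nonzero — a mild non-degeneracy condition that holds for generic non-polynomial $\sigma$ but is not strictly implied by Assumption~\ref{ass:sigma_exp} alone (pathological activations of the form $\sigma(t)=t^2+\text{odd function}$ would make the symmetric combination purely quadratic and render arbitrarily small $|c|$ admissible).
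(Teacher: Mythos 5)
Your strategy is essentially the paper's: symmetrize so the odd terms of Assumption~\ref{ass:sigma_exp} cancel, match the coefficient of $x^2$ (giving $|c|w^2=\Theta(1)$), control the quartic remainder (giving $|c|w^4\lesssim k^{-\alpha}$), and divide the two constraints to conclude $|c|\gtrsim k^{\alpha}$, $K\gtrsim k^{\alpha}$. If anything, the paper's own proof is less careful than your outline: it posits a power-law ansatz $w=\mathcal O(k^{-\beta/2})$, treats the high-order error as genuinely of size $c\,(wx)^4$, and balances exponents to force $\beta=\alpha$; you try to extract both constraints directly from the uniform bound, without the ansatz, and you at least mention the regime $|w|>\rho$, which the paper ignores altogether.

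The gap is the one you flag yourself, and it is genuine: the residual-control bound $|c|w^4\lesssim k^{-\alpha}$ does not follow from $\max_{x\in[0,1]}|x^2-\Phi_k(x)|=\mathcal O(k^{-\alpha})$, because Assumption~\ref{ass:sigma_exp} only bounds the remainder $r$ from above, so cancellation between $(\gamma-1)x^2$ and $cR(x)$ cannot be excluded. Your finite-difference device is never constructed (no difference operator, nodes, or lower bound on the ``leading non-quadratic even mode'' is exhibited), and no such device can exist under Assumption~\ref{ass:sigma_exp} alone: for $\sigma(t)=t^2+\sin t$, which is non-polynomial and satisfies the assumption with $a_2=1$, the even part of $\sigma$ is exactly $t^2$, so $\Phi_k(x)=2cw^2x^2$ reproduces $x^2$ exactly with $c=1/2$, $w=1$, $K=\mathcal O(1)$, and the necessity reading of the proposition (the one your argument targets, and the one the surrounding remark intends) fails. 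So your proof is incomplete at its crucial step; in fairness, the paper's proof tacitly makes the same non-degeneracy assumption by reading the $\mathcal O((wx)^4)$ term as being exactly of that order, so the defect you identify is as much a defect of the statement and of the paper's argument as of yours. Under an added quantitative non-degeneracy hypothesis (the even part of $\sigma$ minus $a_2t^2$ having a nonzero leading even term at $0$), your two-constraint scheme could be completed, but the finite-difference step would still have to be written out, and the $|w|>\rho$ paragraph as stated is also only heuristic (a lower bound $|\sigma(w)+\sigma(-w)|\ge 1/(C|c|)$ does not by itself force $K=2|c||w|\gtrsim k^{\alpha}$ without growth control on $\sigma$).
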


\begin{proof}
We notice that the coefficient of $x^2$ in the Taylor expansion of $\Phi_k$ must satisfy
\[ 
    c \cdot 2a_2 w^2 = 1 + \mathcal{O}(k^{-\alpha}),
\]
because of the assumption $\max_{x \in [0,1]}|x^2 - \Phi_k(x)| = \mathcal{O}(k^{-\alpha})$. Assuming that $w = \mathcal{O}(k^{-\beta/2})$ for some $\beta > 0$, the condition above requires $c \cdot k^{-\beta} = \mathcal{O}(1)$, thus $c = \mathcal O(k^{\beta})$. On the other hand, the error $c \cdot \mathcal{O}((wx)^4)$ from the high-order terms in the Taylor expansion satisfies
\[
    c \cdot \mathcal{O}(k^{-2\beta}) = c \cdot \mathcal{O}((wx)^4) = \mathcal{O}(k^{-\alpha}), 
\]
which means that $c = \mathcal O(k^{2\beta-\alpha})$. For these two bounds to be compatible, we need $\mathcal O(k^{\beta}) = \mathcal O(k^{2\beta-\alpha})$, which is satisfied if and only if $\beta = \alpha$, giving
\[
    c = \mathcal O(k^\alpha) \qquad \text{and} \qquad K = 2c = \mathcal O(k^\alpha).
\]
\end{proof}

\section{Low regularity approximation} \label{subsec:lower_regularity}

In this section, we show that Assumption \ref{ass:sigma_exp} on $\sigma$ can be relaxed. Specifically, it is sufficient that $\sigma$ is locally Lipchitz with a locally quadratic behavior at a specific point $x_0$.

\begin{lemma}\label{lem:weak_modulus}
Let $\sigma:\R\to\R$ be locally Lipschitz and $\alpha>0$. We assume that there exist $x_0\in\R$, $0<\rho\le1$, $\gamma\neq 0$, and a nondecreasing modulus $\omega:[0,\rho]\to[0,\infty)$, with $\lim_{t\downarrow 0}\omega(t)=0$, such that
\begin{equation}\label{eq:even_modulus}
\left|\sigma(x_0+h)+\sigma(x_0-h)-2\sigma(x_0)-\gamma h^2\right| \le \omega(|h|)h^2 \qquad \text{for all } h \in[- \rho, \rho ],
\end{equation}
and we fix a nondecreasing sequence $(w_k)_{k\ge 1}$, with $w_k\in(0,\rho]$, such that $\omega(w_k) \le c_\star  k^{-\alpha}$ for some constant $c_\star >0$.

Then, for each $k\in\N$, $k\ge1$, there exists a neural network $\phi_k\in\nN(2,2,K_{k,\alpha})$, with $K_{k,\alpha} = 2/(|\gamma|w_k^2)$, such that $\phi_k([0,1])\subseteq [0,1]$ and
\[
    \max_{x\in[0,1]} \left| x^2 - \phi_k(x) \right| \le \frac{c_\star}{|\gamma|} k^{-\alpha}.
\]
\end{lemma}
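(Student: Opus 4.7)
The plan is to adapt the two-neuron construction of \cref{lemma:approximation_x2_regular}, replacing the third-order Taylor expansion around $0$ (Assumption \ref{ass:sigma_exp}) with the second-difference bound \eqref{eq:even_modulus} centered at $x_0$. I would define
\[
\Phi_k(x) := d_k\, \bigl[\sigma(x_0 + w_k x) + \sigma(x_0 - w_k x) - 2\sigma(x_0)\bigr], \qquad d_k := \frac{1}{\gamma w_k^2}.
\]
For $x \in [0,1]$ the argument $h = w_k x$ satisfies $|h| \le w_k \le \rho$, so \eqref{eq:even_modulus} applies directly. Substituting it and using the calibration $d_k \gamma w_k^2 = 1$ gives $\Phi_k(x) = x^2 + d_k R_k(x)$ with $|R_k(x)| \le \omega(w_k|x|)(w_k x)^2 \le \omega(w_k)\,w_k^2$ by monotonicity of $\omega$ and $|x| \le 1$. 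Hence
\[
\max_{x \in [0,1]} |\Phi_k(x) - x^2| \le \frac{\omega(w_k)}{|\gamma|} \le \frac{c_\star}{|\gamma|}\,k^{-\alpha},
\]
by the defining property of the sequence $(w_k)_{k \ge 1}$.

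For the architecture, the naive parametrization gives a width-$2$ block with affine pre-activations $\pm w_k x + x_0$ and output $d_k(u_1 + u_2) - 2d_k\sigma(x_0)$. To match the norm $K_{k,\alpha} = 2/(|\gamma| w_k^2)$, which is precisely $2|d_k|$ as in \cref{lemma:approximation_x2_regular}, I would use the freedom (granted in the preliminaries) to equip the two hidden neurons with distinct activations, specifically the centered shifted versions $\tilde\sigma_\pm(z) := \sigma(x_0 \pm z) - \sigma(x_0)$. This absorbs both the input bias $x_0$ and the output constant $2d_k\sigma(x_0)$. The resulting network has first-layer weights $(w_k, w_k)^\top$ with zero bias (so $\|(A_0, 0)\|_\infty = w_k \le 1$) and output row $(d_k, d_k)$ with zero bias (so $\|(A_1, 0)\|_\infty = 2|d_k| = K_{k,\alpha}$), giving $\Phi_k \in \nN(2, 1, K_{k,\alpha})$.

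To enforce $\phi_k([0,1]) \subseteq [0,1]$ without worsening the error, I would compose $\Phi_k$ with the $1$-Lipschitz hard clipping $\psi(z) := \mathrm{ReLU}(z) - \mathrm{ReLU}(z-1)$ used in \cref{lemma:approximation_x2_regular}; the composition rule in \cref{proposition:properties_neural_netowrk}(ii) then places $\phi_k := \psi \circ \Phi_k$ in $\nN(2, 2, K_{k,\alpha})$, while the error bound is preserved because $\psi$ acts as the identity on $[0, 1]$ and $x^2 \in [0, 1]$.

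The main obstacle is the bookkeeping of the bias $x_0$ and the constant $\sigma(x_0)$, which would otherwise corrupt the clean norm product: a direct parametrization with $\sigma$ at both neurons produces an extra multiplicative $(1+|x_0|)(1+|\sigma(x_0)|)$ factor in the norm. The centered-activation device used above is the precise analogue of the tacitly used identity $\sigma(0) = 0$ from Assumption \ref{ass:sigma_exp} in the proof of \cref{lemma:approximation_x2_regular}; it restores the zero-bias structure and delivers exactly the bound $K_{k,\alpha} = 2/(|\gamma| w_k^2)$ claimed in the statement.
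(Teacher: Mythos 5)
Your proposal is correct and follows essentially the same route as the paper: the same two-neuron symmetric block with $d_k = 1/(\gamma w_k^2)$, the same application of \eqref{eq:even_modulus} with $h = w_k x$ and monotonicity of $\omega$, and the same ReLU clipping via \cref{proposition:properties_neural_netowrk}. Your explicit centered-activation bookkeeping for $x_0$ and $\sigma(x_0)$ is precisely what the paper's terse "replace $\sigma$ by $\tilde\sigma(t):=\sigma(t)-\sigma(x_0)$ and shift the origin" accomplishes, so you have merely spelled out a step the paper leaves implicit.
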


\begin{proof}
We replace $\sigma$ by $\tilde\sigma(t) := \sigma(t)-\sigma(x_0)$ so that $\tilde\sigma(x_0)=0$, and we shift the origin in $x_0$ so that $x_0=0$. Next, we define, for each $k\ge1$, the width-$2$ neural network
\[
    \Phi_k(x) := d_k \left(\sigma(w_k x)+\sigma(-w_k x)\right), \qquad \text{with} \quad d_k:=\tfrac{1}{\gamma w_k^2}.
\]
We notice that $\Phi_k\in\nN(2,1,K_{k,\alpha})$, with $K_{k,\alpha} = \frac{2}{|\gamma|w_k^2}$. By applying~\eqref{eq:even_modulus} with $h=w_k x$ (note that $|h|\le w_k\le \rho$ for $x\in[0,1]$), we have that
\[
    \sigma(w_k x)+\sigma(-w_k x) = \gamma (w_k x)^2 + r_k(x), \qquad \text{with} \quad |r_k(x)| \le \omega(|w_k x|) w_k^2 \,x^2 \le \omega(w_k) w_k^2 \, x^2.
\]
Multiplying by $d_k=1/(\gamma w_k^2)$ yields
\[
\Phi_k(x) = x^2 + \frac{r_k(x)}{\gamma w_k^2} \implies  \bigl|x^2-\Phi_k(x)\bigr| \le \frac{\omega(w_k)}{|\gamma|}x^2 \le \frac{\omega(w_k)}{|\gamma|}\le\frac{c_\star}{|\gamma|}k^{-\alpha}.
\]
We next define the clipping $\psi(t):=\mathrm{ReLU}(t)-\mathrm{ReLU}(t-1)\in\nN(2,1,1)$ and $\phi_k=\psi\circ\Phi_k\in\nN(2,2,K_{k,\alpha})$. Thus $\phi_k([0,1])\subset[0,1]$ and
\[
    \max_{x\in[0,1]} \left| x^2 - \phi_k(x) \right| \le \frac{c_\star}{|\gamma|} k^{-\alpha}.
\]
follows from the fact that $\psi$ is $1$-Lipschitz with $\psi(z)=z$ on $[0,1]$.
\end{proof}

\begin{remark}
Assumption~\eqref{eq:even_modulus} is a locally $C^2$ condition of $\sigma$ at $x_0$. No third and fourth derivatives are needed.
\end{remark}

\begin{corollary}\label{cor:weak_holder}
    Let $\sigma:\R\to\R$ be locally Lipschitz and $\alpha>0$. We assume that there exist $x_0\in\R$, $0<\rho\le1$, $\gamma\neq 0$, and $C_\beta>0$, with $\beta\in(0,2]$, such that
    \begin{equation}\label{eq:Ho}
        \left|\sigma(x_0+h)+\sigma(x_0-h)-2\sigma(x_0)-\gamma h^2\right| \le C_\beta|h|^{2+\beta} \qquad \text{for all } h \in[- \rho, \rho ].
    \end{equation}
    Then, for each $k\in\N$, $k>\rho^{-\frac{\beta}{\alpha}}$, there exists a neural network $\phi_k\in\nN(2,2,K_{k,\alpha})$, with $K_{k,\alpha} = \frac{2}{|\gamma|}k^\frac{2\alpha}{\beta}$, such that $\phi_k([0,1])\subseteq[0,1]$ and
    \[
        \max_{x\in[0,1]} \left| x^2 - \phi_k(x) \right| \le \frac{C_\beta}{|\gamma|} k^{-\alpha}.
    \]
\end{corollary}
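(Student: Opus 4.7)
The plan is to reduce this corollary to \cref{lem:weak_modulus} by identifying the modulus of evenness dictated by the H\"older-type bound \eqref{eq:Ho} and then selecting the weight sequence so that the hypotheses of the lemma are met with the target rate.

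First, I would rewrite \eqref{eq:Ho} as
\[
\left|\sigma(x_0+h)+\sigma(x_0-h)-2\sigma(x_0)-\gamma h^2\right| \le \bigl(C_\beta |h|^\beta\bigr)\,h^2, \qquad |h| \le \rho,
\]
which matches the structural inequality \eqref{eq:even_modulus} with modulus $\omega(t):=C_\beta t^\beta$ on $[0,\rho]$. Since $\beta>0$, this $\omega$ is nondecreasing with $\omega(t)\to 0$ as $t\downarrow 0$, so the structural hypothesis of \cref{lem:weak_modulus} is verified.

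Next, I would take the weight sequence $w_k := k^{-\alpha/\beta}$. This choice gives $\omega(w_k) = C_\beta k^{-\alpha}$, so the decay bound $\omega(w_k)\le c_\star k^{-\alpha}$ holds with $c_\star := C_\beta$. Moreover, the admissibility constraint $w_k\in(0,\rho]$ translates to $k^{-\alpha/\beta}\le\rho$, i.e. $k\ge\rho^{-\beta/\alpha}$, which is exactly the hypothesis on $k$ in the corollary.

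Invoking \cref{lem:weak_modulus} with these choices then produces a network $\phi_k \in \nN(2,2,K_{k,\alpha})$ with
\[
K_{k,\alpha} = \frac{2}{|\gamma|\,w_k^2} = \frac{2}{|\gamma|}\,k^{2\alpha/\beta}
\]
and uniform approximation error on $[0,1]$ bounded by $\tfrac{c_\star}{|\gamma|}\,k^{-\alpha} = \tfrac{C_\beta}{|\gamma|}\,k^{-\alpha}$, while the range constraint $\phi_k([0,1])\subseteq[0,1]$ is inherited directly. These are precisely the parameters and the error bound stated in the corollary, so there is no substantive obstacle: the proof reduces to bookkeeping that matches $\omega$, $w_k$, and $c_\star$ to the $\beta$-H\"older structure of \eqref{eq:Ho}.
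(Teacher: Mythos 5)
Your proposal is correct and coincides with the paper's own proof: both reduce the corollary to \cref{lem:weak_modulus} by identifying $\omega(t)=C_\beta t^\beta$ in \eqref{eq:Ho} and choosing $w_k=k^{-\alpha/\beta}$, with $c_\star=C_\beta$, which immediately yields the stated norm constraint and error bound. The only difference is that you spell out the bookkeeping (admissibility $w_k\le\rho$, the value of $K_{k,\alpha}$) that the paper leaves implicit.
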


\begin{proof}
    The proof follows from~\cref{lem:weak_modulus} by noticing that assumption~\eqref{eq:Ho} corresponds to assumption~\eqref{eq:even_modulus} with $\omega(t) = C_\beta t^\beta$ and by choosing $w_k = k^{-\frac{\alpha}{\beta}}$.
\end{proof}

\begin{remark}
If we take $\beta=2$ in~\cref{cor:weak_holder}, we get $w_k=k^{-\alpha/2}$ and we recover the same neural network construction of~\cref{lemma:approximation_x2_regular}. Thus, this result strictly weakens the regularity requirements on the activation function $\sigma$ while preserving the same approximation rate $k^{-\alpha}$.
\end{remark}

\begin{example} \label{example:1}
In each example below $\sigma$ is locally Lipschitz and satisfies~\eqref{eq:even_modulus} with some $\gamma\neq 0$ and a modulus $\omega\downarrow 0$, yet \emph{does not} satisfy Assumption \ref{ass:sigma_exp} required to apply~\cref{lemma:approximation_x2_regular}.

\medskip
\noindent\textbf{(A) $C^2$ but not $C^3$: a cubic cusp.}
We define
\[
\sigma_A(x):=x^2 + c\,|x|^3 \qquad (c\neq 0).
\]
Then, for all $h\in\R$, it holds that
\[
\sigma_A(h)+\sigma_A(-h)-2\sigma_A(0)=2h^2 + 2c|h|^3 = \gamma h^2 + (2c |h|) \,h^2,
\]
so~\cref{lem:weak_modulus} holds with $\gamma=2$ and $\omega(t)=2|c| t$. We notice that $\sigma_A\in C^2$ but not $C^3$ at $0$ (the third derivative jumps due to the modulus), hence a global $O(|x|^4)$ Taylor remainder does not exist.

\medskip
\noindent\textbf{(B) $C^{2,\beta}$ but not $C^3$: a H\"older bump.}
We fix $\beta\in(0,2)$ and define
\[
\sigma_B(x):=x^2 + c\,|x|^{2+\beta}\qquad (c\neq 0).
\]
Then, for all $h\in\R$, it holds that
\[
\sigma_B(h)+\sigma_B(-h)-2\sigma_B(0)=2h^2 + 2c|h|^{2+\beta} = \gamma h^2 + (2c|h|^{\beta}) \, h^2,
\]
so~\cref{lem:weak_modulus} holds with $\gamma=2$ and $\omega(t)=2|c| t^{\beta}$. If $\beta\in(0,1]$, then $\sigma_B\notin C^3$ at $0$; if $\beta\in(1,2)$, then $\sigma_B\in C^3$ but not $C^4$, and the third–order Taylor remainder is $O(|x|^{2+\beta})\not=O(|x|^4)$.

\medskip
\noindent\textbf{(C) $C^2$ with a logarithmic modulus (no power law).}
We define, for $x\in\R$,
\[
\sigma_C(x):=x^2\left(1+\frac{1}{\log \left(e/|x|\right)}\right),\qquad \sigma_C(0):=0.
\]
Then, for $h\in\R$, it holds that
\[
\sigma_C(h)+\sigma_C(-h)-2\sigma_C(0) = 2h^2 \left(1+\frac{1}{\log \left(\mathrm e/|h|\right)}\right) = \gamma h^2 + \frac{2}{\log(\mathrm e/|h|)} \, h^2.
\]
Thus~\cref{lem:weak_modulus} holds with $\gamma=2$ and a \emph{logarithmic} modulus $\omega(t)=|2/\log(\mathrm e/t)|\downarrow 0$. The function is $C^2$ at $0$, but it is not $C^{2,\beta}$ for any $\beta>0$ (and hence not $C^3$), and the remainder is not $O(|x|^{2+\beta})$ for any $\beta>0$, in particular not $O(|x|^4)$.

\medskip
\noindent\textbf{(D) Odd Lipschitz perturbations.}
Let $g$ be any locally Lipschitz \emph{odd} function (i.e. $g(-x)=-g(x)$) near $0$, and we consider
\[
\sigma_D(x):=x^2 + g(x).
\]
Then $g(h)+g(-h)=0$ for all $h$, so it holds that
\[
\sigma_D(h)+\sigma_D(-h)-2\sigma_D(0)=2h^2,
\]
and~\cref{lem:weak_modulus} holds with $\gamma=2$ and \emph{zero} modulus $\omega\equiv 0$. By choosing, for instance,
\[
g(x)=\lambda\,x|x|\qquad\text{or}\qquad g(x)=\lambda\,x\sin\left(\log(1/|x|)\right)\,\mathbf{1}_{\{|x|<\mathrm e^{-1}\}},
\]
we obtain locally Lipschitz activations that are not $C^2$ at $0$, so they cannot admit a third–order Taylor expansion with $O(|x|^4)$ remainder. Nevertheless, the even part still yields the exact quadratic coefficient required by~\cref{lem:weak_modulus}.
\end{example}

\section{Numerical validation of low regularity approximation} \label{sec:numerical_weaker}

In this section, we present numerical results that validate~\cref{lem:weak_modulus} and~\cref{cor:weak_holder} by using the two–neuron symmetric approximation with the activation functions $\sigma$ in~\cref{example:1}. Specifically, we consider
\[
\Phi_k(x) = \tfrac{1}{\gamma w_k^2}\bigl(\sigma(w_k x)+\sigma(-w_k x)-2\sigma(0)\bigr), \qquad \phi_k(x) = \psi \left(\Phi_k(x)\right), \quad \psi(t)= \relu(t)-\relu(t-1).
\]
Here $\gamma$ is the even–part quadratic coefficient at $0$ (for all our examples $\gamma=2$), and $w_k$ is the weight chosen to match the modulus $\omega$ in~\eqref{eq:even_modulus} as follows.
\begin{itemize}
    \item For power–law modulus $\omega(t)\asymp t^\beta$, we take weights $w_k=k^{-\alpha/\beta}$ (cases~\textbf{(A)}–\textbf{(B)}).
    \item For the logarithmic modulus $\omega(t)\asymp 1/\log(\mathrm e/t)$, we use $w_k=\mathrm{e}^{1-k^\alpha}$ (case~\textbf{(C)}).
    \item In the exact–cancellation case~\textbf{(D)}, any vanishing weights $w_k$ works (in our the experiment we use $w_k=k^{-\alpha/2}$).
\end{itemize}
In all experiments, we fix $\alpha=1$. In~\cref{tab:weak-lemma}, we report the following quantities: the \emph{unclipped} and \emph{clipped} errors; the predicted bound $\omega(w_k)/|\gamma|$; the range of $\Phi_k$ on $[0,1]$. Looking at~\cref{tab:weak-lemma}, we draw the following conclusions.
\begin{itemize}
    \item Cases (A)–(B) exhibit errors consistent with $k^{-\alpha}$ (since $w_k^\beta\asymp k^{-\alpha}$). The empirical errors halve approximately when $k$ doubles, consistent with the design $w_k\sim k^{-\alpha/\beta}$ and the bound $\omega(w_k)\sim w_k^\beta\sim k^{-\alpha}$.
    \item Case (C) shows the slower, logarithmic decay dictated by its modulus. The decay follows the slower logarithmic modulus; the growth of $d_k=1/(\gamma w_k^2)$ is correspondingly faster, which is visible in the increasingly narrow transition region of $\Phi_k$.
    \item Case (D) achieves machine precision due to exact cancellation of the odd perturbation in the even block. The odd perturbation cancels exactly in the even block, so $\Phi_k\equiv x^2$ on $[0,1]$ up to round-off.
\end{itemize}
\cref{fig:app_abs_A,fig:app_abs_B,fig:app_abs_C} show approximation and absolute errors for cases (A), (B) and (C) respectively.

\begin{table}[h]
  \centering
  \small
  \begin{tabular}{l|cc|ccccccc}
    \toprule
    Activation function & $k$ & $\alpha$ & $w_k$ & $d_k$ & $\|\Phi_k-x^2\|_\infty$ & $\|\phi_k-x^2\|_\infty$ & pred. & $\min\Phi_k$ & $\max\Phi_k$ \\
    \midrule
    \textbf{A)} $x^2 + |x|^3$ & 8  & 1.0 & 0.125   & 32.0    & 0.1250 & 0.1056 & 0.1250 & 0.0 & 1.1250 \\
                      & 16 & 1.0 & 0.0625  & 128.0   & 0.0625 & 0.0572 & 0.0625 & 0.0 & 1.0625 \\
                      & 32 & 1.0 & 0.0313 & 512.0   & 0.0313 & 0.0298 & 0.0312 & 0.0 & 1.0313 \\
                      & 64 & 1.0 & 0.0156 & 2048.0  & 0.0156 & 0.0152 & 0.0156 & 0.0 & 1.0156 \\
    \midrule
    \textbf{B)} $x^2 + |x|^{2+0.7}$ & 8  & 1.0 & 0.0513 & 190.21  & 0.1250 & 0.1071 & 0.1250 & 0.0 & 1.1250 \\
                           & 16 & 1.0 & 0.0190 & 1378.20   & 0.0625 & 0.0576 & 0.0625 & 0.0 & 1.0625 \\
                           & 32 & 1.0 & 0.0071 & 9986.16   & 0.0313 & 0.0299 & 0.0313 & 0.0 & 1.0313 \\
                           & 64 & 1.0 & 0.0026 & 72357.6   & 0.0156 & 0.0153 & 0.0156 & 0.0 & 1.0156 \\
    \midrule
    \textbf{C)} $x^2 \left(1 + \tfrac1{\log(\mathrm e/|x|)}\right)$ & 8  & 1.0 & 0.0009 & 601302.1     & 0.1250 & 0.1103 & 0.1250 & 0.0 & 1.1250 \\
                                                     & 16 & 1.0 & $3.059 \cdot 10^{-7}$ & $53 \cdot 10^{13}$     & 0.0625 & 0.0587 & 0.06250 & 0.0 & 1.0625 \\
                                                     & 32 & 1.0 & $3.4 \cdot 10^{-14}$ & $4.2 \cdot 10^{26}$ & 0.0313 & 0.0303 & 0.0313 & 0.0 & 1.0313 \\
                                                     & 64 & 1.0 & $4.3 \cdot 10^{-28}$ & $2.6 \cdot 10^{54}$ & 0.01563 & 0.0154 & 0.0156 & 0.0 & 1.0156 \\
    \midrule
    \textbf{D)} $x^2 + x|x|$ & 8  & 1.0 & 0.3536 & 4.0000  & $3.3 \cdot 10^{-16}$ & $3.3 \cdot 10^{-16}$ & 0.0 & 0.0 & 1.0 \\
                                      & 16 & 1.0 & 0.25               & 8.0                & 0.0                    & 0.0                    & 0.0 & 0.0 & 1.0 \\
                                      & 32 & 1.0 & 0.1768 & 16 & $3.3 \cdot 10^{-16}$ & $3.3 \cdot 10^{-16}$ & 0.0 & 0.0 & 1.0 \\
                                      & 64 & 1.0 & 0.125              & 32.0               & 0.0                    & 0.0                    & 0.0 & 0.0 & 1.0 \\
    \bottomrule
  \end{tabular}
    \caption{Numerical validation for the weak-regularity \cref{lem:weak_modulus} (all runs with $\alpha=1$ and $\gamma=2$). The column "pred." is the theoretical bound $\omega(w_k)/|\gamma|$.}
    \label{tab:weak-lemma}
\end{table}

\begin{figure}[h]
  \centering
  \begin{subfigure}[t]{0.45\textwidth}
  \includegraphics[width=\linewidth]{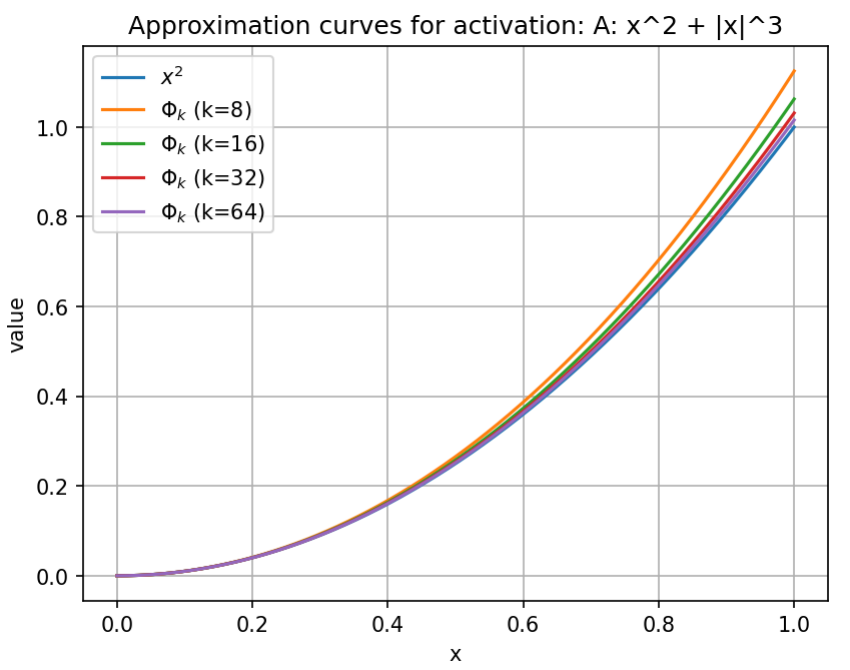}
  \end{subfigure}\hfill
  \begin{subfigure}[t]{0.45\textwidth}
  \includegraphics[width=\linewidth]{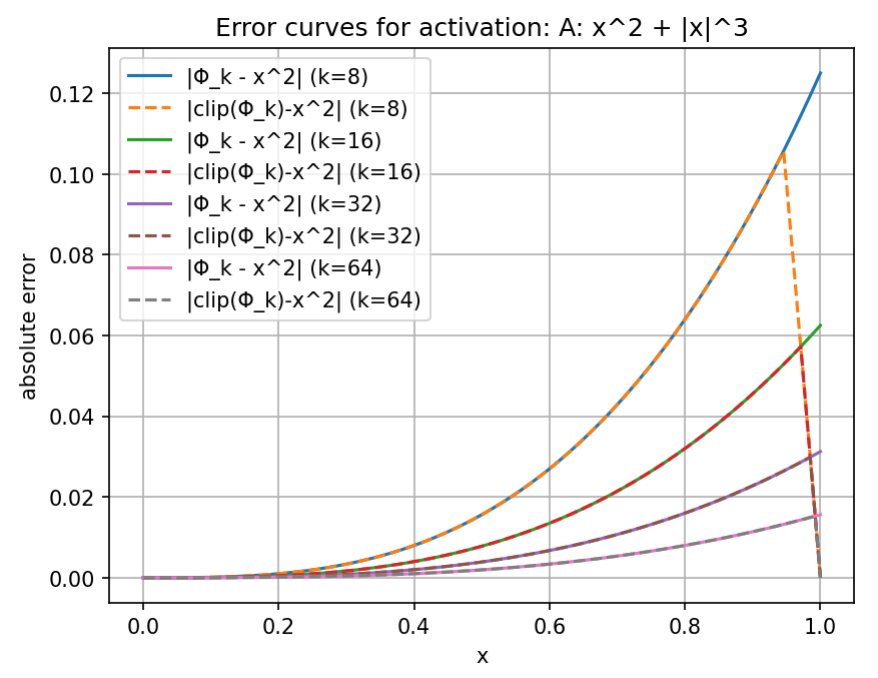}
  \end{subfigure}
  \caption{Approximation (left) and absolute error (right) for case (A). Solid: $\Phi_k$; dashed: $\phi_k=\mathrm{clip}(\Phi_k)$.}
  \label{fig:app_abs_A}
\end{figure}

\begin{figure}[h]
  \centering
  \begin{subfigure}[t]{0.45\textwidth}
  \includegraphics[width=\linewidth]{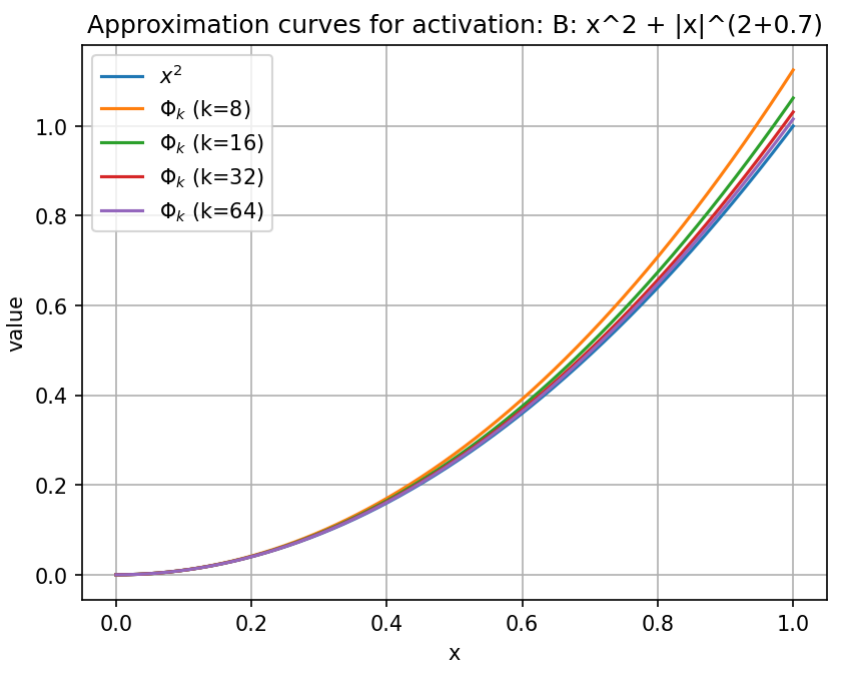}
  \end{subfigure}\hfill
  \begin{subfigure}[t]{0.45\textwidth}
  \includegraphics[width=\linewidth]{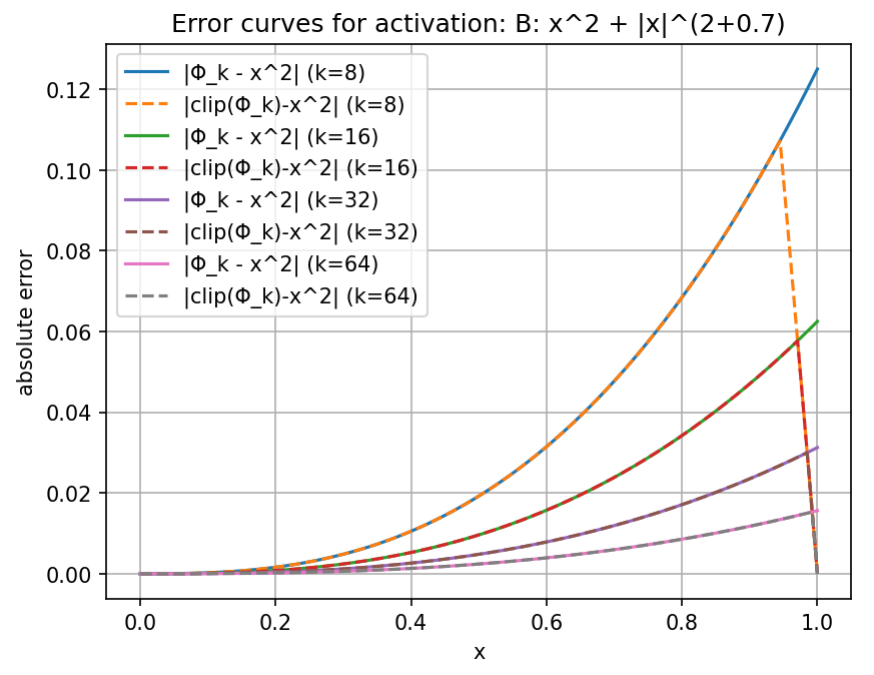}
  \end{subfigure}
  \caption{Approximation (left) and absolute error (right) for case (B). Solid: $\Phi_k$; dashed: $\phi_k=\mathrm{clip}(\Phi_k)$.}
  \label{fig:app_abs_B}
\end{figure}

\begin{figure}[h]
  \centering
  \begin{subfigure}[t]{0.45\textwidth}
  \includegraphics[width=\linewidth]{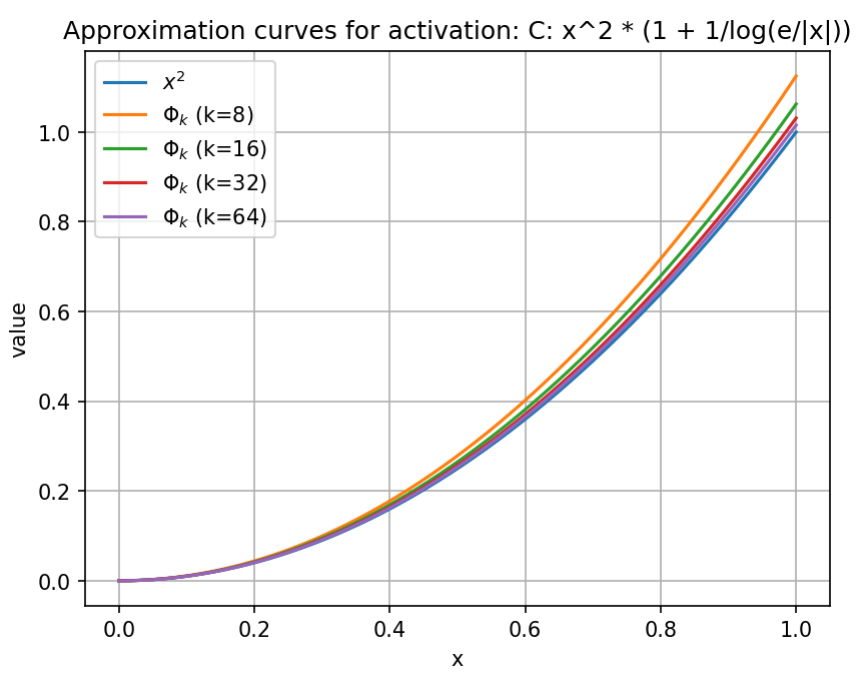}
  \end{subfigure}\hfill
  \begin{subfigure}[t]{0.45\textwidth}
  \includegraphics[width=\linewidth]{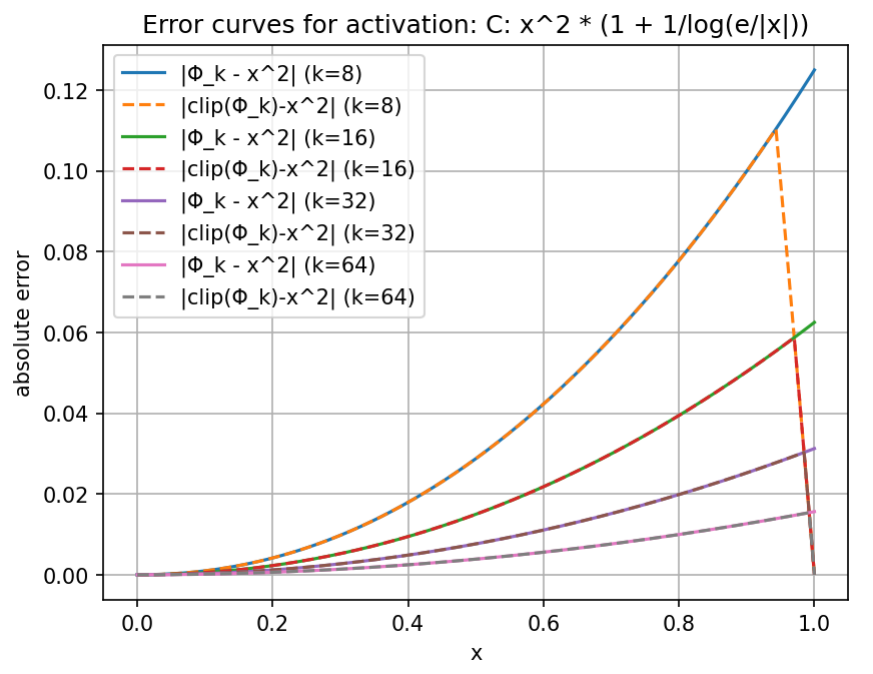}
  \end{subfigure}
  \caption{Approximation (left) and absolute error (right) for case (C). Solid: $\Phi_k$; dashed: $\phi_k=\mathrm{clip}(\Phi_k)$.}
  \label{fig:app_abs_C}
\end{figure}


\clearpage

\bibliographystyle{cas-model2-names}




\end{document}